\documentclass[11pt]{article}
\usepackage{amsmath}
\usepackage{bbm}
\usepackage{amsfonts}
\usepackage{graphicx}
\usepackage{enumerate,booktabs}
\usepackage{mathtools}
\usepackage[sort,compress]{cite}
\usepackage{parskip}
\usepackage[papersize={8.5in,11in},margin=1in]{geometry}
\usepackage{caption}
\usepackage{capt-of}
\usepackage{algorithm,algorithmic}
\usepackage{subcaption}
\usepackage{xcolor}
\usepackage{hyperref}
\usepackage{multirow}
\usepackage{tikz}
\usetikzlibrary{positioning}
\usepackage{tabu}
\usepackage{colortbl}
%\numberwithin{equation}{section}
\hypersetup{
  pdfcreator = {},
  pdfproducer = {}
}
\captionsetup[table]{name=Table}

\newtheorem{theorem}{Theorem}

\newtheorem{example}{Example}

\numberwithin{equation}{section}

\newenvironment{proof}[1][Proof]
{\noindent\textbf{#1.} }{\ \rule{0.5em}{0.5em}}

\newcommand*{\affaddr}[1]{#1} % No op here. Customize it for different styles.
\newcommand*{\affmark}[1][*]{\textsuperscript{#1}}
\newcommand*{\email}[1]{\texttt{#1}}

\begin{document}

%\begin{center}
%	\title{Time evolution of the characteristic and probability density function of diffusion processes via neural networks}
	\title{Neural network representation of the probability density function of diffusion processes}
%\end{center} 
%\Large\bf{}

\author{%
Wayne Isaac T. Uy\affmark[1,]\affmark[3], Mircea D. Grigoriu\affmark[1,]\affmark[2\thanks{both authors contributed equally to this work}]\\
\affaddr{
\affaddr{\affmark[1]Center for Applied Mathematics}\\
\affaddr{\affmark[2]Department of Civil and Environmental Engineering}\\
\affaddr{Cornell University} \\
\affmark[3]Courant Institute of Mathematical Sciences \\ New York University}\\
\email{wayne.uy@cims.nyu.edu, mdg12@cornell.edu}\\
}
\date{\vspace{-3ex}}

\maketitle

\begin{abstract}
Physics-informed neural networks are developed to characterize the state of dynamical systems in a random environment. The neural network approximates  the probability density function (pdf) or the characteristic function (chf) of the state of these systems which satisfy the Fokker-Planck equation or an integro-differential equation under Gaussian and/or Poisson white noises.
%We investigate the use of a physics-informed neural network-based solution for the PDE satisfied by 
%%to represent 
%the probability density function (pdf) of the state of a dynamical system subject to random forcing. 
%%The neural network is an approximation to the solution of a PDE satisfied by the pdf. 
%Two alternatives for the PDE are considered: the Fokker-Planck equation and a PDE for the characteristic function (chf) of the state, both of which provide the same probabilistic information. 
We examine analytically and numerically the advantages and disadvantages of solving each type of differential equation to characterize the state. It is also demonstrated how prior information of the dynamical system can be exploited to design and simplify the neural network architecture. Numerical examples show that: 1) the neural network solution can approximate the target solution even for partial integro-differential equations and system of PDEs describing the time evolution of the pdf/chf, 2)  solving either the Fokker-Planck equation or the chf differential equation  using neural networks yields similar pdfs of the state, and 3) the solution to these differential equations can be used to study the behavior of the state for different types of random forcings.
%Solving these PDEs using the finite element method is unfeasible when the dimension of the state is larger than 3.
\end{abstract}

%{\bf The probability density function of a state in a stochastic system is of paramount importance in science and engineering. It can be used to compute statistics of the state which are essential in understanding its behavior. An approach to characterize the time evolution of the pdf is by solving the Fokker-Planck equation. However, this equation may be unavailable for some types of random forcings and furthermore, finite difference and finite element numerical schemes may not be suitable for partial differential equations in high dimensions. In view of these, we derive a differential equation for the characteristic function of the state via stochastic analysis and develop a physics-informed neural network representation for the time-evolution of the pdf and the chf. The neural network architecture is designed by incorporating prior information on the stochastic differential equation. Our methodology is applied to various nonlinear dynamical systems driven by Gaussian and/or Poisson white noise. The results highlight that neural networks can adequately approximate the pdf or the chf of the state which satisfies various types of  equations including systems of PDEs and integro-differential equations. The ideas established here can be directly extended to the case when the neural network is trained in a gridless manner.}

\section{Introduction}

Let $\boldsymbol{X}(t) \in \mathbb{R}^d$ be a random vector defined on  a probability space $(\Omega,\mathcal{F},P)$ whose dynamics are goverened by
$$d\boldsymbol{X}(t) = \boldsymbol{a}(\boldsymbol{X}(t)) \,dt + \boldsymbol{b}(\boldsymbol{X}(t)) \,d \boldsymbol{Y}(t), \,\,\, t \ge 0$$
where $\boldsymbol{Y}(t) \in \mathbb{R}^k$ is a stochastic process and $\boldsymbol{a} \in \mathbb{R}^{d \times 1}, \boldsymbol{b} \in \mathbb{R}^{d \times k}$. Such stochastic differential equations (SDE) are used to model complex systems that arise in various applications of science and engineering \cite{book:Grigoriu2002}. We are interested in computing the probability density function $f(\boldsymbol{x},t)$ of $\boldsymbol{X}(t)$ which can then be used to estimate statistics of $\boldsymbol{X}(t)$,  including its moments $E[\boldsymbol{X}(t)^p]$ and probabilities of events $P(\boldsymbol{X}(t) \in A)$, $A \in \mathcal{F}$.
%, such as failure probabilities. 
If $\boldsymbol{Y}(t)$ is Brownian motion, $f(\boldsymbol{x},t)$ satisfies a partial differential equation (PDE) called the Fokker-Planck equation \cite{book:Risken1989}. Analytical solutions to this PDE are only available under particular conditions on the drift and diffusion matrices \cite{book:KloedenP1992,book:Risken1989} and moreover, numerical solutions via the finite element method become unstable if the state  $\boldsymbol{X}(t)$ has dimension $d > 3$
%, finite element codes become unfeasible in approximating the solution 
\cite{paper:PichlerMB2013,paper:MasudB2005,paper:WojtkiewiczB2000}.

%Numerical methods have been developed to solve the Fokker-Planck equation in high dimensions as well as PDEs on high-dimensional domains. 
Other methods have been sought to solve the Fokker-Planck equation in high dimensions for special cases. In general, they approximate solutions of PDEs on high-dimensional domains. We only provide a brief survey of recent work. In \cite{paper:ChoVK2016}, various algorithms are presented to solve kinetic PDEs in which the high-dimensional problem is transformed into a sequence of low-dimensional ones. Nonlinear high-dimensional PDEs are tackled in \cite{paper:DektorV2019} by decomposing the function space into lower-dimensional nested subspaces. The work by \cite{paper:ChenMT2018,paper:ChenM2018} study the Fokker-Planck equation for high-dimensional nonlinear turbulent dynamical systems which possess conditional Gaussian structures, i.e. a set of components of the state conditioned on the trajectory of the remaining components is a Gaussian process. Finally, \cite{paper:ChenR2018} reduces the high-dimensional Fokker-Planck equation into a 1 or 2-dimensional PDE and uses
the path integral solution to solve resulting the dimension-reduced PDE.

As an alternative to the above dimension-reduction approaches and the traditional methods for solving PDEs, neural networks have been proposed to solve nonlinear and/or high-dimensional PDEs in scientific, engineering, and financial applications \cite{paper:SirignanoS2018,paper:RaissiPK2019}. A neural network is used to represent the solution to the PDE while its parameters are obtained via optimization. The objective function enforces the neural network approximation to satisfy the governing equation of the PDE together with the initial and boundary conditions. This idea has been pursued in \cite{paper:RaissiPK2019} to solve the Schr\"odinger, Allen-Cahn, KdV, and Burger's equation in 1 dimension and the Navier Stokes equation in 2 dimensions which are nonlinear PDEs whose solution may exhibit nearly discontinuous behavior. The work\cite{paper:SirignanoS2018} successfully estimated free boundary PDE solutions on domains of up to 200 dimensions and solutions to the high-dimensional Hamilton-Jacobi-Bellman PDE.  

By utilizing the above methodology, a neural network-based approximation to the Fokker-Planck equation has been undertaken in \cite{paper:AlAradiCNJS2018,paper:AlAradiCNJS2020,paper:XuZLZLK2020}.  The 1-dimensional time-varying PDE was tackled in \cite{paper:AlAradiCNJS2018,paper:AlAradiCNJS2020} where it was noticed that it was necessary to incorporate the constraint that the Fokker-Planck solution integrates to 1 for all times in the optimization step. Otherwise, the authors showed that the neural network approximation could not recover the analytical solution. In \cite{paper:XuZLZLK2020}, the steady-state PDE of up to 3 dimensions was addressed and strategies were presented to account for the normalization constraint.

Building on existing work, we investigate how neural networks can be used to represent the pdf of a state vector satisfying a stochastic differential equation. In contrast to \cite{paper:XuZLZLK2020}, we seek the pdf over a time interval instead of the steady-state solution. In addition, while \cite{paper:AlAradiCNJS2018,paper:AlAradiCNJS2020,paper:XuZLZLK2020}  
are only concerned with the Fokker-Planck equation, we also consider an alternative differential equation which describes the time evolution of the characteristic function of the state. The pdf and chf offer identical information about $\boldsymbol{X}(t)$ such that both can be used to compute its statistics, however, the latter is complex-valued.  We study the advantages and disadvantages of solving the Fokker-Planck equation or the differential equation for the chf in order to approximate the pdf of  $\boldsymbol{X}(t)$ from an analytical and numerical perspective. In particular, we highlight situations in which solving the Fokker-Planck equation may not be favorable regardless of the solution method employed. Strategies are then outlined on how the neural network architecture can be designed and simplified by exploiting probabilistic information from the SDE. The numerical examples feature the capabilities of the neural network solution to match the target solution for various dynamical systems subject to different types of noise. This work serves as a proof of concept of our objectives and adapts the methodology of \cite{paper:RaissiPK2019}. Extensions to the high-dimensional situations can be accomplished following \cite{paper:SirignanoS2018}. The differential equations dealt with here are different from the examples presented in \cite{paper:RaissiPK2019}; for instance, the time evolution of the chf may be represented by a partial integro-differential equation in which the highest order of the partial derivative can exceed 2.

A brief survey of neural networks and its application to solving PDEs in the spirit of \cite{paper:RaissiPK2019} is presented in Section~\ref{sec:PINN}. In Section~\ref{sec:CharFunFPPDE}, we derive a differential equation for the characteristic function of the state subject to commonly used random forcing via stochastic analysis.  A comparison between this differential equation and the Fokker-Planck equation is also performed. A neural network-based solution for these differential equations is then presented in Section~\ref{sec:NNapproxPDE}. Finally, Section~\ref{sec:NumExp} showcases the approximation properties of neural networks for a variety of applications aligned with our objective.

\section{Physics-informed neural networks} \label{sec:PINN}

A brief survey of the physics-informed neural networks framework \cite{paper:RaissiPK2019} for approximating solutions to PDEs is outlined in this section. Section~\ref{subsec:ReviewNN} describes the components of the neural network architecture and training of its parameters as employed in machine learning. Section~\ref{subsec:reviewPINN} then elaborates how neural networks can be trained to represent solutions to PDEs.

\subsection{Review of neural networks} \label{subsec:ReviewNN}

Neural networks traditionally employed in machine learning construct an approximation $\widetilde{\boldsymbol{f}}(\boldsymbol{x},t)$ to an unknown mapping $(\boldsymbol{x},t) \in  \mathbb{R}^{d+1} \mapsto \boldsymbol{f}(\boldsymbol{x},t) \in \mathbb{R}^m$ from data on the input and the output. Several types of neural network architectures exist \cite{book:GoodfellowBC2016}; in this work, we only focus on feedforward neural networks. Denote the components of $\boldsymbol{x} \in \mathbb{R}^d$ and $\boldsymbol{f} \in \mathbb{R}^m$ by $\boldsymbol{x} = (x_1,\dots,x_d)$ and $\boldsymbol{f} = (f_1,\dots,f_m)$ respectively, and suppose that $N$ data points $\{(\boldsymbol{x}_i,t_i,\boldsymbol{f}(\boldsymbol{x}_i,t_i))\}_{i=1}^N$ of the unknown function $\boldsymbol{f}$ are available. The neural network architecture is comprised of an input layer with $m_0 \coloneqq d+1$ neurons corresponding to each input, an output layer with $m_{L+1} \coloneqq m$ neurons corresponding to each output, and $L$ hidden layers in between with $m_{\ell}, \ell = 1,\dots,L$ neurons each. An example of a neural network with 2 hidden layers is depicted in Figure~\ref{fig:NNarch}.

The output layer ($\ell = L+1$) and each of the hidden layers is associated with a function $\mathcal{H}_{\ell}: \mathbb{R}^{m_{\ell - 1}} \rightarrow \mathbb{R}^{m_{\ell}}, \ell = 1,\dots, L+1$  such that the approximation $\widetilde{\boldsymbol{f}}$ is a composition of these functions, i.e.
\begin{align*}
\widetilde{\boldsymbol{f}}(\boldsymbol{x},t) = \mathcal{H}_{L+1} \circ \cdots \circ \mathcal{H}_1 (\boldsymbol{x},t).
\end{align*} 
In particular, $\mathcal{H}_{\ell}$ is a possibly nonlinear transformation of an affine function expressed as $\mathcal{H}_{\ell}(\boldsymbol{z}) = \sigma_{\ell}(\boldsymbol{W}^{\ell} \boldsymbol{z} + \boldsymbol{b}^{\ell})$ in which $\boldsymbol{z} \in \mathbb{R}^{m_{\ell - 1}}$, $\sigma_{\ell}$ is an activation function that is applied to each component of its input argument, $\boldsymbol{W}^{\ell} \in \mathbb{R}^{m_{\ell} \times m_{\ell-1}}$ is the weight matrix, and $\boldsymbol{b}^{\ell} \in \mathbb{R}^{m_{\ell}}$ is a vector of biases. If the $(i,j)$-entry of $\boldsymbol{W}^{\ell}$ is 0, this signifies that there is no edge connecting the $j$-th neuron of the $(\ell-1)$-th layer to the $i$-th neuron of the $\ell$-th layer.

In the above formulation, the number of hidden layers, the number of neurons $m_{\ell}$ per hidden layer, and the activation function $\sigma_{\ell}$ have to be specified beforehand. Commonly used activation functions include the sigmoid $\sigma_{\ell}(z) = \frac{1}{1 + e^{-z}}$, the ReLU $\sigma_{\ell}(z) = \max(z,0)$, and the hyperbolic tangent $\sigma_{\ell}(z) = \tanh z$, the latter being used in our simulations. The weight matrices $\boldsymbol{W}^{\ell}$ and the bias vectors $\boldsymbol{b}^{\ell}$ are then estimated by minimizing a loss function $\mathcal{L}$ which measures the discrepancy between the available data and the prediction via $\widetilde{\boldsymbol{f}}(\boldsymbol{x},t)$. One such loss function is the mean squared error (MSE) given by $\mathcal{L} = \sum_{i=1}^N \|\widetilde{\boldsymbol{f}}(\boldsymbol{x}_i,t_i) - \boldsymbol{f}(\boldsymbol{x}_i,t_i)\|_2^2$. The loss is minimized via gradient descent wherein the gradients of $\mathcal{L}$ with respect to $\boldsymbol{W}^{\ell},\boldsymbol{b}^{\ell}$ are efficiently calculated through backpropagation.

We refer the reader to \cite{book:GoodfellowBC2016} for further details on choosing the activation and the loss functions, the various optimization algorithms for minimizing the loss, approaches to initializing the parameters, etc.

\begin{figure}[h!]
\tikzset{%
  every neuron/.style={
    circle,
    draw,
    minimum size=1cm
  },
  neuron missing/.style={
    draw=none, 
    scale=4,
    text height=0.333cm,
    execute at begin node=\color{black}$\vdots$
  },
}
\begin{center}
\begin{tikzpicture}[x=1.5cm, y=1.5cm, >=stealth]

\foreach \m/\l [count=\y] in {1,missing,2,3}
  \node [every neuron/.try, neuron \m/.try] (input-\m) at (0,2-\y) {};

\foreach \m [count=\y] in {1,2,missing,3,4}
  \node [every neuron/.try, neuron \m/.try ] (hidden1-\m) at (2,2-\y*0.85) {};
  
 \foreach \m [count=\y] in {1,2,missing,3,4}
  \node [every neuron/.try, neuron \m/.try ] (hidden2-\m) at (4,2-\y*0.85) {};

\foreach \m [count=\y] in {1,missing,2}
  \node [every neuron/.try, neuron \m/.try ] (output-\m) at (6,1.5-\y) {};

\foreach \l [count=\i] in {1,d}
  \draw [<-] (input-\i) -- ++(-1,0)
    node [above, midway] {$x_\l$};

 \draw [<-] (input-3) -- ++(-1,0)
    node [above, midway] {$t$};

%\foreach \l [count=\i] in {1,2,3,n}
%  \node [below=1.5mm] at (hidden1-\i.north) {$H^1_\l$};
%  
%\foreach \l [count=\i] in {1,2,3,n}
%  \node [below=1.5mm] at (hidden2-\i.north) {$H^1_\l$};

\foreach \l [count=\i] in {1,m}
  \draw [->] (output-\i) -- ++(1,0)
    node [above = 2mm, midway] {$f_\l(\boldsymbol{x},t)$};

\foreach \i in {1,...,3}
  \foreach \j in {1,...,4}
    \draw [->] (input-\i) -- (hidden1-\j);

\foreach \i in {1,...,4}
  \foreach \j in {1,...,4}
    \draw [->] (hidden1-\i) -- (hidden2-\j);

\foreach \i in {1,...,4}
  \foreach \j in {1,...,2}
    \draw [->] (hidden2-\i) -- (output-\j);

\foreach \l [count=\x from 0] in {Input, Hidden, Hidden, Ouput}
  \node [align=center, above] at (\x*2,2) {\l \\ layer};

\end{tikzpicture}
\end{center}
\caption{An example of a feedforward neural network with 2 hidden layers.} \label{fig:NNarch}
\end{figure}
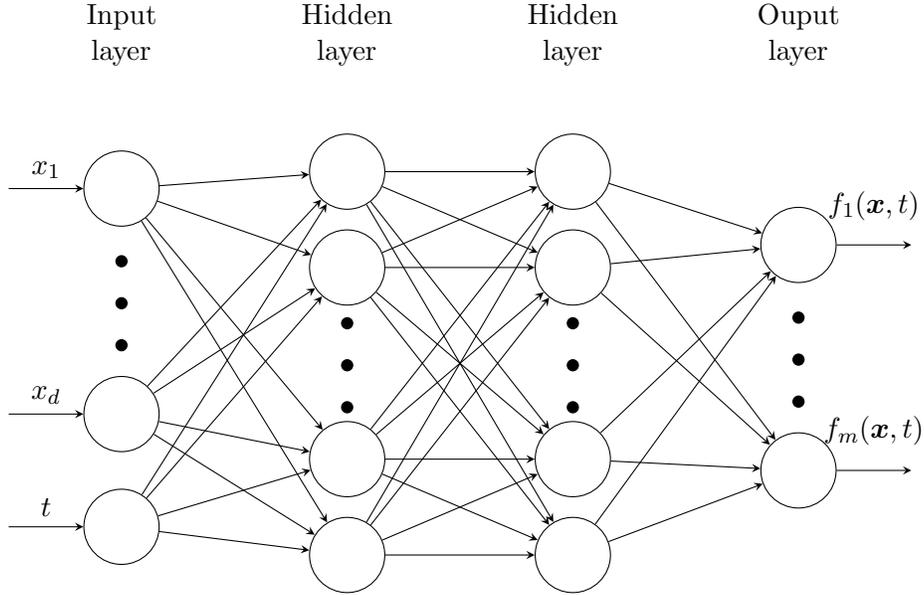

\subsection{Solving PDEs using neural networks} \label{subsec:reviewPINN}

Feedforward neural networks are universal approximators, i.e. they can sufficiently approximate any measurable function \cite{paper:Cybenko1989,paper:HornikSW1989} such as solutions to PDEs. Denote by $\boldsymbol{f}(\boldsymbol{x},t)$ the solution to the PDE given by
\begin{align} \label{eq:ModelPDE}
\mathcal{N}[\boldsymbol{f}(\boldsymbol{x},t)] & = \boldsymbol{0}, \hspace{3.5em} (\boldsymbol{x},t) \in D \times [0,T] \\
\boldsymbol{f}(\boldsymbol{x},0) & = \boldsymbol{g}(\boldsymbol{x}), \hspace{2em}  \boldsymbol{x} \in D \notag \\
\boldsymbol{f}(\boldsymbol{x},t) & = \boldsymbol{h}(\boldsymbol{x},t), \hspace{1em} (\boldsymbol{x},t) \in \partial D \times [0,T] \notag
\end{align}
for the operator $\mathcal{N}$, spatial domain $D$ and its boundary $\partial D$, end time $T$, and initial and boundary conditions $\boldsymbol{g}(\boldsymbol{x})$ and $\boldsymbol{h}(\boldsymbol{x},t)$, with $\boldsymbol{0}$ denoting a vector of zeros. Unlike the  setup traditionally utilized in machine learning wherein we desire the neural network to match available  data on the PDE solution $\{(\boldsymbol{x}_i,t_i,\boldsymbol{f}(\boldsymbol{x}_i,t_i))\}_{i=1}^N$,  \cite{paper:RaissiPK2019} pursues an approach based on physics-informed neural networks. Instead, collocation points in the domain $D \times [0,T]$ are selected to enforce the governing equation and the initial and boundary equations in \eqref{eq:ModelPDE} which are known. Let $\{(\boldsymbol{x}^{Op}_i,t^{Op}_i)\}_{i=1}^{N_{Op}}$, $\{(\boldsymbol{x}^{IC}_i,0)\}_{i=1}^{N_{IC}}$, and $\{(\boldsymbol{x}^{BC}_i,t^{BC}_i)\}_{i=1}^{N_{BC}}$ be 3 sets of collocation points corresponding to each equation in \eqref{eq:ModelPDE}. For a specified architecture for the neural network approximation $\widetilde{\boldsymbol{f}}(\boldsymbol{x},t)$ of $\boldsymbol{f}(\boldsymbol{x},t)$, we seek the weight matrices and the bias vectors that minimize the loss function
\begin{align} \label{eq:LossPINN}
\mathcal{L} = \frac{1}{N_{Op}} \sum_{i=1}^{N_{Op}} \|\mathcal{N}[\widetilde{\boldsymbol{f}}(\boldsymbol{x}_i^{Op},t_i^{Op})]\|_2^2 & + \frac{1}{N_{IC}} \sum_{i=1}^{N_{IC}} \|\widetilde{\boldsymbol{f}}(\boldsymbol{x}_i^{IC},0)-\boldsymbol{g}(\boldsymbol{x}_i^{IC})\|_2^2 \\
& + \frac{1}{N_{BC}} \sum_{i=1}^{N_{BC}} \|\widetilde{\boldsymbol{f}}(\boldsymbol{x}_i^{BC},t_{i}^{BC})-\boldsymbol{h}(\boldsymbol{x}_i^{BC},t_i^{BC})\|_2^2 \notag.
\end{align}
Computing $\mathcal{L}$ requires calculating gradients of $\widetilde{\boldsymbol{f}}$ that are present in the operator $\mathcal{N}$ which is efficiently carried out through automatic differentiation in TensorFlow \cite{paper:TensorFlow}.

If $\boldsymbol{x}$ is high-dimensional, a large number of collocation points in $D$ would be required to ensure that $\widetilde{\boldsymbol{f}}$ satisfies the constraints in \eqref{eq:ModelPDE}. In this case, \cite{paper:SirignanoS2018} proposes a meshfree method in which randomly chosen batches of collocation points in $D \times [0,T]$ are selected to enforce \eqref{eq:ModelPDE} in the process of training the neural network.

We emphasize that our objective is to investigate the feasibility of neural networks in representing the probability density function $f(\boldsymbol{x},t)$ of the state $\boldsymbol{X}(t)$ that arises as the solution of a PDE. Consequently, we adopt the physics-informed neural network approach in \cite{paper:RaissiPK2019} together with most of the architecture specifications they have used in their simulations. Our focus is not on finding the most effective choice of activation functions, number of hidden layers and hidden neurons, optimization algorithm, etc.

% edit equation numbers below!!!

\section{Differential equations for the state pdf} \label{sec:CharFunFPPDE}

We detail how the pdf of the state can be represented as the solution of some differential equation. Stochastic analysis is performed in Section~\ref{subsec:CharFunPDE} to derive a differential equation for the chf of the state subject to commonly used random forcings. The Fokker-Planck equation is then reviewed in Section~\ref{subsec:FPeqn} which is a consequence of the differential equation for the chf. 

\subsection{Differential equation for the characteristic function of the state} \label{subsec:CharFunPDE}

We derive the differential equation for the characteristic function of dynamical systems subject to Gaussian and Poisson white noise defined as formal derivatives of the Brownian motion and compound Poisson processes. Examples are then presented to illustrate the application of the derived equation.

Consider the $\mathbb{R}^d$-valued diffusion process defined by
the stochastic differential equation
\begin{equation}
d\boldsymbol{X}(t)=\boldsymbol{a}\big(\boldsymbol{X}(t-)\big)\,dt+\boldsymbol{b}\big(\boldsymbol{X}(t-)\big)\,d\boldsymbol{B}(t)+\boldsymbol{c}\big(\boldsymbol{X}(t-)\big)\,d\boldsymbol{C}(t),
\quad t\geq 0,
\label{eq:diffusionEq}
\end{equation}
where the drift $\boldsymbol{a}$ is a $(d,1)$-matrix, the diffusions $\boldsymbol{b}$ and
$\boldsymbol{c}$ are $(d,m_B)$ and $(d,m_C)$-matrices, the Brownian motion $\boldsymbol{B}$
is a vector of $m_B$ independent standard Brownian motions, $\boldsymbol{C}$ is
a vector of $m_C$ independent compound Poison processes
$C_r(t)=\sum_{\nu=1}^{N_r(t)}Y_{r,\nu}$, $r=1,\ldots,m_C$, which
depend on the homogeneous Poisson processes $\{N_r\}$ of
intensities $\{\lambda_r\}$ and jump sizes $\{Y_{r,1}, Y_{r,2},\ldots\}$
that are independent copies of the random variables $\{Y_r\}$
with $E[Y_r]=0$, and $\boldsymbol{X}(t-)=\lim_{s\uparrow t}\boldsymbol{X}(s)$.  It is
assumed that the drift and diffusion coefficients are such that
Eq.~\eqref{eq:diffusionEq} admits a unique strong solution \cite[Sect.~4.7.1.1 and 4.7.2]{book:Grigoriu2002}.

\begin{theorem}
The characteristic function of $\boldsymbol{X}(t)$,
$\varphi(\boldsymbol{u},t)=E\big[\exp\big(i\,\boldsymbol{u}'\,\boldsymbol{X}(t)\big)\big]$ for
$\boldsymbol{u}\in\mathbb{R}^d$,  satisfies
\begin{align}
\frac{\partial \varphi(\boldsymbol{u},t)}{\partial t} &= i\,\sum_{k=1}^d
u_k\,E\bigg[ e^{i\,\boldsymbol{u}'\,\boldsymbol{X}(t-)}\,a_k\big(\boldsymbol{X}(t-)\big)\bigg]\nonumber\\
&-\frac{1}{2}\sum_{k,l=1}^d u_k\,u_l\,E\bigg[
\exp\big(i\,\boldsymbol{u}'\, \boldsymbol{X}(t-)\big)\,
\sum_{w=1}^{m_B}b_{kw}\big(\boldsymbol{X}(t-)\big)\,b_{lw}\big(\boldsymbol{X}(t-)\big)\bigg]\nonumber\\
&+\sum_{r=1}^{m_c}\lambda_r\,E\bigg[\int_{\mathbb{R}}e^{i\,\boldsymbol{u}'\,\big(\boldsymbol{X}(t-)
+c^{(r)}(\boldsymbol{X}(t-))\,y\big)}\,dF_r(y)-e^{i\,\boldsymbol{u}'\,\boldsymbol{X}(t-)}\bigg].
\label{eq:CharFunPDE}
\end{align}
\end{theorem}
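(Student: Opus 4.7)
My plan is to apply the Itô formula for jump-diffusions to the bounded complex-valued test function $h(\boldsymbol{x}) = e^{i\boldsymbol{u}'\boldsymbol{x}}$ evaluated along the solution $\boldsymbol{X}(t)$ of Eq.~\eqref{eq:diffusionEq}, take expectations to discard the martingale terms, then differentiate the resulting integral identity in $t$. Equivalently, this amounts to computing the infinitesimal generator of $\boldsymbol{X}$ applied to $h$ and invoking Dynkin's formula.

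The key computational input is that the partial derivatives of $h$ are trivial: $\partial h/\partial x_k = i u_k h$ and $\partial^2 h/\partial x_k \partial x_l = -u_k u_l h$. Substituting these into the Itô decomposition produces (i) a drift integral that will furnish the first line of Eq.~\eqref{eq:CharFunPDE} after taking expectations, (ii) a Brownian stochastic integral in $d\boldsymbol{B}$, (iii) the Itô quadratic-variation correction of order $\boldsymbol{b}\boldsymbol{b}'$ that will furnish the second line, and (iv) a pure jump sum over the jump times of $\boldsymbol{C}$. For the jump part, I represent the compound Poisson process through its Poisson random measure and write the jump sum as its predictable compensator (whose intensity is $\lambda_r\,dF_r(y)\,ds$) plus a compensated martingale. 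Because a jump of $N_r$ moves $\boldsymbol{X}$ from $\boldsymbol{X}(s-)$ to $\boldsymbol{X}(s-) + \boldsymbol{c}^{(r)}(\boldsymbol{X}(s-))\,Y_{r,\nu}$, the compensator contributes exactly the third line of Eq.~\eqref{eq:CharFunPDE} after expectation and after pulling the integral against $dF_r$ inside.

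Taking expectations then kills both the Brownian stochastic integral and the compensated jump martingale — the uniform bound $|h| \le 1$ ensures these are genuine martingales under mild integrability hypotheses on $\boldsymbol{b}$ and $\boldsymbol{c}$ — leaving an absolutely continuous identity $\varphi(\boldsymbol{u},t) - \varphi(\boldsymbol{u},0) = \int_0^t G(\boldsymbol{u},s)\,ds$ in which $G(\boldsymbol{u},s)$ is precisely the claimed right-hand side. Differentiating in $t$ by the fundamental theorem of calculus yields the stated PDE.

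The main obstacle I anticipate is the jump term: one must carefully argue that the compensator of the pure-jump contribution has the form claimed. This uses the independence of the jump sizes $Y_{r,\nu}$ from $\mathcal{F}_{s-}$ together with independence across the $m_C$ Poisson clocks, so that, conditional on the pre-jump state $\boldsymbol{X}(s-)$, each jump size has distribution $F_r$ and the post-jump state is the deterministic shift $\boldsymbol{X}(s-) + \boldsymbol{c}^{(r)}(\boldsymbol{X}(s-))\,y$. Secondary technicalities — the complex-valued Itô formula (handled by treating real and imaginary parts separately), the interchange of $\partial/\partial t$ with the expectation (justified because $G(\boldsymbol{u},s)$ is continuous in $s$ given the boundedness of $h$ and the local boundedness of the coefficients) — are routine.
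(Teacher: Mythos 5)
Your proposal is correct and shares the paper's overall skeleton: apply the It\^o formula to $\boldsymbol{x}\mapsto e^{i\boldsymbol{u}'\boldsymbol{x}}$ along $\boldsymbol{X}(t)$, take expectations to annihilate the martingale parts, and differentiate the resulting integral identity in $t$. The genuine difference lies in how the jump contribution is converted into the third line of \eqref{eq:CharFunPDE}. You invoke the Poisson random measure representation of $\boldsymbol{C}$ and replace the pure-jump sum by its predictable compensator with intensity $\lambda_r\,dF_r(y)\,ds$, so the expectation of the jump term is already an absolutely continuous integral in $s$ and the result follows by the fundamental theorem of calculus. The paper instead argues from first principles on a small window $(t,t+\Delta t]$: it conditions on the number of jumps of each $N_r$, uses $P(N_r(\Delta t)=1,\,N_s(\Delta t)=0,\,s\neq r)\simeq\lambda_r\Delta t$ and the uniformity of the jump time given one jump, discards multi-jump events as $O((\Delta t)^2)$, and passes to the limit in the difference quotient \eqref{eq:limit3rdTerm}. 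Your route is cleaner and more rigorous at this step (no $\simeq$ approximations, and it does not even need $E[Y_r]=0$, which the paper uses to kill the $dC$ stochastic integral in term $I$ and the compensating entry of term $III$ separately), but it presupposes the compensation machinery for marked point processes; the paper's argument is more elementary and self-contained. One caveat on your side remark: the bound $|e^{i\boldsymbol{u}'\boldsymbol{x}}|\le 1$ alone does not make the Brownian integral a true martingale, since its integrand carries the unbounded factor $b_{kw}(\boldsymbol{X}(s-))$; you correctly flag that integrability hypotheses on the coefficients are needed, which is the same level of rigor as the paper's unqualified assertion that these integrals have zero mean.
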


\begin{proof}
We use the It\^o formula to develop a differential equation for
$\varphi(\boldsymbol{u},t)$.   The integral version of this
formula for a mapping $\boldsymbol{X}(t)\mapsto g\big(\boldsymbol{X}(t)\big)$ which has
continuous second order partial derivatives has the form
\begin{align}
g\big(\boldsymbol{X}(t)\big)-g\big(\boldsymbol{X}(0)\big)&=\underbrace{\sum_{k=1}^d \int_{0+}^t
\frac{\partial g\big(\boldsymbol{X}(s-)\big)}{\partial x_k}\,dX_k(s)}_{I}
+\underbrace{\frac{1}{2}\,\sum_{k,l=1}^d \int_{0+}^t \frac{\partial^2
g\big(\boldsymbol{X}(s-)\big)}{\partial x_k\,\partial
x_l}\,d\big[X_k,X_l\big]^c(s)}_{II}\nonumber\\
&+\underbrace{\sum_{0<s\leq
t}\bigg[g\big(\boldsymbol{X}(s)\big)-g\big(\boldsymbol{X}(s-)\big)-\sum_{k=1}^d
\frac{\partial g\big(\boldsymbol{X}(s-)\big)}{\partial x_k}\,\Delta
X_k(s)\bigg]}_{III},
\label{eq:ItoLemma}
\end{align}
where $[X_k,X_l]^c(s)$ denotes the continuous part of the
quadratic covariation of the components $X_k$ and $X_l$ of $\boldsymbol{X}$
%$\boldsymbol{X}(s-)=\lim_{u \uparrow s}\boldsymbol{X}(u)$, 
and  $\Delta
X_k(s)=X_k(s)-X_k(s-)$ is the jump of component $X_k$ at time $s$
\cite[Sect.~4.6.2]{book:Grigoriu2002}. The It\^o formula holds for
semimartingales $\boldsymbol{X}(t)$ and shows that $g\big(\boldsymbol{X}(t)\big)$ is also a
semimartingale.

The above formula can be applied for the real and imaginary parts
of the mapping $\boldsymbol{X}(t)\mapsto \exp\big(i\,\boldsymbol{u}'\,\boldsymbol{X}(t)\big)$ since
$\boldsymbol{X}(t)$ in Eq.~\eqref{eq:diffusionEq} is a semimartingale and
$\exp\big(i\,\boldsymbol{u}'\,\boldsymbol{X}(t)\big)$ has continuous partial derivatives.
Since It\^o's formula is linear in $g$ and its derivative, it can
be applied directly to the complex-valued mapping $g:\boldsymbol{X}(t)\mapsto
\exp\big(i\,\boldsymbol{u}'\,\boldsymbol{X}(t)\big)$.

An overview of the derivation is as follows. We find the terms $I,II,III$ on the right side of Eq.~\eqref{eq:ItoLemma} for $g(\boldsymbol{X}(t)) = \exp\big(i\,\boldsymbol{u}'\,\boldsymbol{X}(t)\big)$, calculate their
expectations, and find the output of the It\^o formula. The differential equation for the characteristic function then results by differentiating the expectation of~\eqref{eq:ItoLemma} with respect to time.

The first term $I$ of the right side of Eq.~\eqref{eq:ItoLemma} is
\begin{align}
i\,\sum_{k=1}^d u_k\,\int_{0+}^t e^{i\,\boldsymbol{u}'\,\boldsymbol{X}(s-)}\,dX_k(s)&=
i\,\sum_{k=1}^d u_k\,\int_{0+}^t
e^{i\,\boldsymbol{u}'\,\boldsymbol{X}(s-)}\,\bigg(a_k\big(\boldsymbol{X}(s-)\big)\,ds
+\sum_{w=1}^{m_B}b_{k w}\big(\boldsymbol{X}(s-)\big)\,dB_{w}(s)
\nonumber\\
&+\sum_{v=1}^{m_C}c_{kv}\big(\boldsymbol{X}(s-)\big)\,dC_v(s)\bigg)
\nonumber
\end{align}
so that its expectation is
\begin{equation}
i\,\sum_{k=1}^d u_k\,E\bigg[\int_{0+}^t
e^{i\,\boldsymbol{u}'\,\boldsymbol{X}(s-)}\,a_k\big(\boldsymbol{X}(s-)\big)\,ds\bigg]
\label{eq:Term1RHS}
\end{equation}
since the stochastic integrals with respect to the Brownian motion
and compound Poisson processes are martingales starting at zero so
that they have zero expectations.

For the second term $II$ of the right side of Eq.~\eqref{eq:ItoLemma}, we note
that the Poisson white noise does not contribute to the processes
$[X_k,X_l]^c(s)$ so that
\begin{align*}
&d[X_k,X_l]^c(s)\nonumber\\
&=\bigg[a_k\big(\boldsymbol{X}(s-)\big)\,ds
+\sum_{w=1}^{m_B}b_{kw}\big(\boldsymbol{X}(s-)\big)\,dB_w(s),a_l\big(\boldsymbol{X}(s-)\big)\,ds
+\sum_{v=1}^{m_B}b_{lv}\big(\boldsymbol{X}(s-)\big)\,dB_v(s)\bigg]^c\nonumber\\
&=\bigg[\sum_{w=1}^{m_B}b_{kw}\big(\boldsymbol{X}(s-)\big)\,dB_w(s),
\sum_{v=1}^{m_B}b_{lv}\big(\boldsymbol{X}(s-)\big)\,dB_v(s)\bigg]^c\nonumber\\
&=\sum_{w,v=1}^{m_B}
\bigg[b_{kw}\big(\boldsymbol{X}(s-)\big)\,dB_w(s),b_{lv}\big(\boldsymbol{X}(s-)\big)\,dB_v(s)\bigg]^c
\nonumber\\
&=\sum_{w,v=1}^{m_B}b_{kw}\big(\boldsymbol{X}(s-)\big)\,
b_{lv}\big(\boldsymbol{X}(s-)\big)\,\delta_{wv}\,ds=
\sum_{w=1}^{m_B}b_{kw}\big(\boldsymbol{X}(s-)\big)\,b_{lw}\big(\boldsymbol{X}(s-)\big)\,ds
\end{align*}
by the definition and linearity of the quadratic covariation
process, the postulated independence of the components of $\boldsymbol{B}(t)$,
and properties of the Brownian motion, e.g.,
$[dB_w(t),dB_w(t)]=dt$.
The integrand of the second term of~\eqref{eq:ItoLemma} is
$i^2\,u_k\,u_l\,\exp\big(i\,\boldsymbol{u}'\,\boldsymbol{X}(s-)\big)$ so that the
expectation of the term is
\begin{equation}
-\frac{1}{2}\sum_{k,l=1}^d u_k\,u_l\,E\bigg[\int_{0+}^t
\exp\big(i\,\boldsymbol{u}'\,\boldsymbol{X}(s-)\big)\,
\sum_{w=1}^{m_B}b_{kw}\big(\boldsymbol{X}(s-)\big)\,b_{lw}\big(\boldsymbol{X}(s-)\big)\,ds\bigg].
\label{eq:Term2RHS}
\end{equation}

The continuous part of $\boldsymbol{X}(t)$ does not contribute to the last term $III$
of the right side of Eq.~\eqref{eq:ItoLemma}.  This term has three entries.
The last entry has zero mean since the jumps $\Delta
X_k(s)=X_k(s)-X_k(s-)$ are scaled versions of the jumps of $\boldsymbol{C}(s)$
which have zero expectations by assumption and so $E\big[\frac{\partial g(\boldsymbol{X}(s-))}{\partial x_k} \Delta X_k(s) \big] = E\big[\frac{\partial g(\boldsymbol{X}(s-))}{\partial x_k} \big] E[\Delta X_k(s)] = 0.$ It therefore remains to examine the first two entries, i.e.
 $h(t) = \sum_{0 < s \le t} [g(\boldsymbol{X}(s)) - g(\boldsymbol{X}(s-))] = \sum_{0 < s \le t} [\exp\big(i\,\boldsymbol{u}'\,\boldsymbol{X}(s)\big)-\exp\big(i\,\boldsymbol{u}'\,\boldsymbol{X}(s-)\big)] $. Instead of computing $E[h(t)]$ explicitly to find $\frac{d}{dt}E[h(t)]$, we consider $E[h(t+\Delta t)] - E[h(t)] = E \bigg[\sum_{t < s \le t + \Delta t} \big(e^{i\,\boldsymbol{u}'\,\boldsymbol{X}(s)} - e^{i\,\boldsymbol{u}'\,\boldsymbol{X}(s-)}\big) \bigg]$ and calculate
\begin{align}\label{eq:limit3rdTerm}
\lim_{\Delta t \rightarrow 0} \frac{E \bigg[ \displaystyle \sum_{t < s \le t + \Delta t} \big(e^{i\,\boldsymbol{u}'\,\boldsymbol{X}(s)} - e^{i\,\boldsymbol{u}'\,\boldsymbol{X}(s-)}\big) \bigg]}{\Delta t}
\end{align} 
to find the contribution of this term  to the differential equation for the characteristic function of $\boldsymbol{X}(t)$.

Consider a small time interval interval $(t,t+\Delta t]$,
$0<\Delta t\ll 1$.  We compute $E \bigg[ \displaystyle \sum_{t < s \le t + \Delta t} \big(e^{i\,\boldsymbol{u}'\,\boldsymbol{X}(s)} - e^{i\,\boldsymbol{u}'\,\boldsymbol{X}(s-)}\big) \bigg]$ by conditioning on the possible number of jumps of the Poisson processes $\{N_r\}_{r=1}^{m_C}$ in this interval. The probability of the event $\{N_r(\Delta
t)= 1,N_s(\Delta t)=0, s\not=r\}$ that component $r$ of $\boldsymbol{C}(s)$
has a jump in  $(t,t+\Delta t]$ and that the other components do not
jump in this time interval is
\begin{align} \label{eq:probOneJump}
P(N_r(\Delta
t)= 1,N_s(\Delta t)=0, s\not=r) = \lambda_r \Delta t e^{-\lambda_r \Delta t}\,\prod_{s\not=r}\exp\big(-\lambda_s\,\Delta
t\big)\simeq \lambda_r\,\Delta t
\end{align}
provided that $\lambda_s\,\Delta t\ll 1$, $s=1,\ldots,m_C$.  Note
also that the probabilities of two or more jumps of the same or of
different components of $\boldsymbol{C}(s)$ in $(t,t+\Delta t]$ are of order
$(\Delta t)^2$ so that conditioning on these events will not contribute to the differential
equation for the characteristic function of $\boldsymbol{X}(t)$ following \eqref{eq:limit3rdTerm}. This means that we only need to consider single component
jumps and add their contributions.
% which will
%result from the expectation of the It\^o formula in $(t,t+\Delta
%t]$ following scaling by $\Delta t$ and taking the limit $\Delta
%t\to 0$.
%

Suppose that the component $C_r(s)$ has a jump of size $Y_r$ at $T_r$ in
$(t,t+\Delta t]$. Then
\begin{align}
\sum_{t<s\le t+\Delta t}\bigg[e^{i\,\boldsymbol{u}'\,\boldsymbol{X}(s)}-e^{i\,\boldsymbol{u}'\,\boldsymbol{X}(s-)}\bigg]
&=
e^{i\,\boldsymbol{u}'\,\boldsymbol{X}(T_r)}-e^{i\,\boldsymbol{u}'\,\boldsymbol{X}(T_r-)}\nonumber\\
&=e^{i\,\boldsymbol{u}'\,\big(\boldsymbol{X}(T_r-)
+c^{(r)}(\boldsymbol{X}(T_r-))\,Y_r\big)}-e^{i\,\boldsymbol{u}'\,\boldsymbol{X}(T_r-)},
\nonumber
\end{align}
where $c^{(r)}$ is the $r$th column of the $(d,m_C)$ diffusion
matrix $\boldsymbol{c}$. It follows that 
%The expectation of this term is
\begin{align} \label{eq:condExpect}
& E \bigg[ \displaystyle \sum_{t < s \le t + \Delta t} \big(e^{i\, \boldsymbol{u}'\, \boldsymbol{X}(s)} - e^{i\,\boldsymbol{u}'\, \boldsymbol{X}(s-)}\big)  \, \bigg |\, N_r(\Delta t) = 1, N_s(\Delta t) = 0, s\neq r \bigg] \notag \\
&=  E \bigg[ e^{i\,\boldsymbol{u}'\,\big(\boldsymbol{X}(T_r-)
+c^{(r)}(\boldsymbol{X}(T_r-))\,Y_r\big)}-e^{i\,\boldsymbol{u}'\,\boldsymbol{X}(T_r-)} \bigg] \notag \\
& =
E\bigg[\int_{\mathbb{R}}e^{i\,\boldsymbol{u}'\,\big(\boldsymbol{X}(T_r-)
+c^{(r)}(\boldsymbol{X}(T_r-))\,y\big)}\,dF_r(y)-e^{i\,\boldsymbol{u}'\,\boldsymbol{X}(T_r-)}\bigg] 
\end{align}
where the last equality holds
since the jump $Y_r$ of $C_r$ at time $T_r$ is independent of $\boldsymbol{X}(T_r-)$. In the third line of \eqref{eq:condExpect}, $F_r$ denotes the distribution of $Y_r$ and the expectation
refers to $\boldsymbol{X}(T_r-)$.  
% you dont need to consider the joint density of X and Y in computing the expectation due to independence
% first line to second line: just plug in the value of N_r(\Delta t) = 1 which results in the expression above
%It constitutes the expectation of
%$\sum_{t<s\leq t+\Delta
%t}\big[e^{i\,u'\,X(s)}-e^{i\,u'\,X(s-)}\big]$ conditional on the
%event $\{N_r(\Delta t)\geq 1,N_s(\Delta t)=0, s\not=r\}$. 
%% There are 3 RVs here: Tr,X,Yr
Since $P(T_r \le s \, | \, N_r(\Delta t) = 1) = s/\Delta t$, i.e. the jump times of $C_r(t)$ in $(t,t+\Delta t]$
are uniformly distributed,
%, and the contribution of two or more
%jumps of this component to the equation of interest is nil, 
we can
replace $T_r$ with a random number in $(t,t+\Delta t]$ so that \eqref{eq:condExpect} becomes
%the
%above average becomes 
%(NOTE: This is a shortcut but hope is
%sufficient for our objective)
%
\begin{align} \label{eq:ApproxToCondExpect}
&E\bigg[\int_t^{t+\Delta t}\,\frac{ds}{\Delta
t}\,\int_{\mathbb{R}}e^{i\,\boldsymbol{u}'\,\big( \boldsymbol{X}(s-)
+c^{(r)}(\boldsymbol{X}(s-))\,y\big)}\,dF_r(y)-e^{i\,\boldsymbol{u}'\,\boldsymbol{X}(s-)}\bigg]\nonumber\\
&\simeq E\bigg[\int_{\mathbb{R}}e^{i\,\boldsymbol{u}'\,\big( \boldsymbol{X}(t-)
+c^{(r)}(\boldsymbol{X}(t-))\,y\big)}\,dF_r(y)-e^{i\,\boldsymbol{u}'\,\boldsymbol{X}(t-)}\bigg].
\end{align}
%
%It follows that the unconditional expectation of $\sum_{t<s\leq
%t+\Delta t}\big[e^{i\,u'\,X(s)}-e^{i\,u'\,X(s-)}\big]$ is
%
We therefore have that
\begin{align}
& E \bigg[ \displaystyle \sum_{t < s \le t + \Delta t} \big(e^{i\,\boldsymbol{u}'\,\boldsymbol{X}(s)} - e^{i\,\boldsymbol{u}'\,\boldsymbol{X}(s-)}\big) \bigg] \notag \\
& \simeq \sum_{r=1}^{m_c} E \bigg[ \displaystyle \sum_{t < s \le t + \Delta t} \big(e^{i\,\boldsymbol{u}'\,\boldsymbol{X}(s)} - e^{i\,\boldsymbol{u}'\,\boldsymbol{X}(s-)}\big)  \, \bigg |\, N_r(\Delta t) = 1, N_s(\Delta t) = 0, s\neq r \bigg] \Delta t \lambda_r + O((\Delta t)^2) \notag \\ 
& =
\Delta
t\,\sum_{r=1}^{m_c}\lambda_r\,E\bigg[\int_{\mathbb{R}}e^{i\,\boldsymbol{u}'\,\big(\boldsymbol{X}(t-)
+c^{(r)}(\boldsymbol{X}(t-))\,y\big)}\,dF_r(y)-e^{i\,\boldsymbol{u}'\,\boldsymbol{X}(t-)}\bigg] + O((\Delta t)^2)
\label{eq:Term3RHS}
\end{align}
following \eqref{eq:probOneJump} and~\eqref{eq:ApproxToCondExpect}.

To conclude the derivation, we now apply the expectation operator to~\eqref{eq:ItoLemma}. We subsequently differentiate with respect
to time the left side of~\eqref{eq:ItoLemma} and the first two terms $I,II$ on
the right side of this equation given by~\eqref{eq:Term1RHS} and \eqref{eq:Term2RHS}. The time derivative of the third term $III$ is then computed by simplifying \eqref{eq:limit3rdTerm} using \eqref{eq:Term3RHS}.
%divide the expectation in Eq.~\ref{eq:6} by $\Delta t$ 
  This yields~\eqref{eq:CharFunPDE} as a result.
\end{proof}

We remark that \eqref{eq:CharFunPDE} is not a differential equation for the characteristic
function of $\boldsymbol{X}(t)$ since the drift and diffusion coefficients are
arbitrary functions.  It becomes a differential equation for $\boldsymbol{X}(t)$
if the drift and diffusion coefficients are polynomials of $\boldsymbol{X}(t)$
and the diffusion matrix $\boldsymbol{c}$ has a particular structure. For example, if $\boldsymbol{c}$ in \eqref{eq:diffusionEq} does not depend on $\boldsymbol{X}(t)$ so that the Poisson white noise is additive, \eqref{eq:CharFunPDE} results in a PDE. If $\boldsymbol{c}$ is linear in $\boldsymbol{X}(t)$ so that the Poisson white noise is multiplicative, \eqref{eq:CharFunPDE} becomes an integro-differential equation as the subsquent examples demonstrate.

As $\varphi(\boldsymbol{u},t) = \int_{\mathbb{R}^d} e^{i\boldsymbol{u}' \boldsymbol{x}} f(\boldsymbol{x},t) \,d \boldsymbol{x}$, the existence of an integrable function $g(\boldsymbol{x})$ such that $|\frac{\partial}{\partial t} f(\boldsymbol{x},t)| \le g(\boldsymbol{x})$ for $(\boldsymbol{x},t) \in D \times [0,T]$ guarantees the existence of $\frac{\partial}{\partial t} \varphi(\boldsymbol{u},t)$ by the dominated convergence theorem. In addition, a solution to~\eqref{eq:CharFunPDE} exists as the characteristic function of a random vector can always be computed. Using facts from probability theory, and assuming that $\boldsymbol{X}(t)$ has a density and finite moments of order $q$, $\varphi(\boldsymbol{u},t)$ satisfies the following conditions $\forall t$, cf. \cite[p. 480]{book:Grigoriu2002}:
\begin{itemize}
\item $\varphi(\boldsymbol{0},t) = 1$ where $\boldsymbol{0} \in \mathbb{R}^d$,
\item $|\varphi(\boldsymbol{u},t)| \le 1$,
\item $\varphi(\boldsymbol{u},t) \rightarrow 0$ and $\frac{\partial^q \varphi (\boldsymbol{u},t)}{\partial u_1^{q_1} \cdots \partial u_d^{q_d}} \rightarrow 0$ as $\|\boldsymbol{u}\| \rightarrow \infty$, where $\boldsymbol{u} = (u_1,\dots,u_d)$ and $q_i \in \{0\} \cup \mathbb{Z}^+$ such that $\sum_{i=1}^d q_i = q$.
\end{itemize}
Since the pdf and the chf of $\boldsymbol{X}(t)$ are Fourier pairs, the pdf $f(\boldsymbol{x},t)$ can be obtained via
\begin{align*}
f(\boldsymbol{x},t) = \frac{1}{(2\pi)^d} \int_{\mathbb{R}^d} e^{-i \boldsymbol{u}' \boldsymbol{x}} \, \varphi(\boldsymbol{u},t) \,d \boldsymbol{u}.
\end{align*}

We now demonstrate the application of \eqref{eq:CharFunPDE} to commonly studied diffusion processes. In the following calculations, we use the fact that $E\big[\exp\big(i\,\boldsymbol{u}'\, \boldsymbol{X}(t-)\big)\big] = E\big[\exp\big(i\, \boldsymbol{u}'\,\boldsymbol{X}(t)\big)\big]$. To see this, observe that $\boldsymbol{X}(t)$ and $\boldsymbol{X}(t-)$ differ on the event $\{N_r(\Delta t) \ge 1\}$ with $P(N_r(\Delta t) \ge 1) \rightarrow 0$ as $\Delta t \rightarrow 0$.

%\vspace{.1in}\noindent

\begin{example}[Verhulst model] Suppose that $X(t)$ is a real-valued
diffusion process with $d=1$, $m_B=m_C=1$, $a(x)=\rho\,x-x^2$,
$b(x)=x$, and $c(x)=x$ and let the jumps of $C(t)$ be distributed according to $F$.
\end{example}
%
%{\bf Example~1. Verhulst model:}   
The three terms on the right side of \eqref{eq:CharFunPDE} are
\begin{align}
&i\,u\,E\big[e^{i\,u\,X(t-)}\,\big(\rho\,X(t-)-X(t-)^2)\big)\big]=
\rho\,u\frac{\partial \varphi(u,t)}{\partial
u}+i\,u\,\frac{\partial^2 \varphi(u,t)}{\partial u^2},
\nonumber\\
&-\frac{u^2}{2}\,E\big[e^{i\,u\,X(t-)}\,X(t-)^2\big]=\frac{u^2}{2}\,\frac{\partial^2
\varphi(u,t)}{\partial u^2}, \quad
{\rm and}\nonumber\\
&\lambda\,\bigg(E\bigg[ \int_{\mathbb{R}} e^{i\,u\,X(t-)\,(1+y)} \,dF(y) \bigg]-E\big[e^{i\,u\,X(t-)}\big]\bigg)
=\lambda\bigg(\int\varphi\big(u\,(1+y),t\big)\,dF(y)-\varphi(u,t)\bigg)
\nonumber
\end{align}
since $\varphi(u,t)=E\big[\exp\big(i\,u\,X(t)\big)\big]$ and
$\partial^r \varphi(u,t)/\partial
u^r=E\big[\big(i\,u\,X(t)\big)^r\,\exp\big(i\,u\,X(t)\big)\big]$.
%the expectations $E\big[\exp\big(i\,u\,X(t)\big)\big]$ and
%$E\big[\exp\big(i\,u\,X(t-)\big)\big]$ are equal.
%

The expressions of these terms and~\eqref{eq:CharFunPDE} give the
following integro-differential equation for the characteristic
function of the state $X(t)$ of the Verhulst model
\begin{equation}
\frac{\partial \varphi(u,t)}{\partial t}= \rho\,u\frac{\partial
\varphi(u,t)}{\partial u}+i\,u\,\frac{\partial^2
\varphi(u,t)}{\partial u^2}+\frac{u^2}{2}\,\frac{\partial^2
\varphi(u,t)}{\partial
u^2}+\lambda\bigg(\int\varphi\big(u\,(1+y)\big)\,dF(y)-\varphi(u,t)\bigg).
\label{eq:VerhulstGeneral}
\end{equation}
%

%\vspace{.1in}\noindent
%

\begin{example}[Duffing model]
Let $Y(t)$ be the displacement of
a Duffing oscillator subjected to Gaussian and Poisson white noise
processes whose jumps are distributed according to $F$ so that the bivariate process $\boldsymbol{X}(t)$ with components
$X_1(t)=Y(t)$ and $X_2(t)=\dot{Y}(t)$ satisfies the It\^o
differential equation
\begin{equation}
\left\{
\begin{array}{ll}
dX_1(t)  &=X_2(t-)\,dt,\\
dX_2(t)  &=-\nu^2\,\big(X_1(t-)+\alpha\,X_1(t-)^3\big)\,dt
   -2\,\zeta\,\nu\,X_2(t-)\,dt+b\,dB(t)+c\,dC(t),
\end{array}
\right.
\label{eq:DuffingGeneralSDE}
\end{equation}
where $\zeta \in (0,1)$ is the damping ratio, $\nu$ denotes
the initial frequency, and $\alpha, b, c$ are real constants.
\end{example}
%{\bf Example~2. Duffing model:} 
 The
differential equation of~\eqref{eq:CharFunPDE} for $d=2$, $u = (u_1,u_2)$, $m_B=m_C=1$, $\boldsymbol{a}(x)=\begin{bmatrix}
a_1(x) \\ a_2(x)
\end{bmatrix} = \begin{bmatrix}
x_2 \\ -\nu^2\,(x_1+\alpha\,x_1^3)-2\,\zeta\,\nu\,x_2
\end{bmatrix},
$
%$a_1(x)=x_2$,
%$a_2(x)=-\nu^2\,(x_1+\alpha\,x_1^3)-2\,\zeta\,\nu\,x_2$,
$\boldsymbol{b}=\begin{bmatrix}
0 \\ b 
\end{bmatrix}
$,
$\boldsymbol{c}=\begin{bmatrix}
0 \\ c 
\end{bmatrix}
$
%$b_{11}=0$, $b_{21}=b$, $c_{11}=0$, and $c_{21}=c$ 
gives
\begin{align}
\frac{\partial \varphi(\boldsymbol{u},t)}{\partial
t}&=i\,\bigg(u_1\,E\big[e^{i\,\boldsymbol{u}'\,\boldsymbol{X}(t-)}\, X_2(t-)\big]\nonumber\\
&+u_2\,E\big[e^{i\,\boldsymbol{u}'\,\boldsymbol{X}(t-)}\,\big(-\nu^2\,(X_1(t-)+\alpha\,X_1(t-)^3)
-2\,\zeta\,\nu\,X_2(t-)\big)\big]\nonumber\\
&-\frac{1}{2}\,u_2^2\,b^2\,E\big[e^{i\,\boldsymbol{u}'\,\boldsymbol{X}(t-)}\big]
+\lambda\,\bigg(\int_{\mathbb{R}}E \big[ e^{i\,\big(u_1\,X_1(t-)+u_2\,X_2(t-) + u_2 \,c\,y \big)} \big]
\,dF(y) -E\big[e^{i\,\boldsymbol{u}'\,\boldsymbol{X}(t-)}\big]\bigg).
\nonumber
\end{align}
Since 
\begin{align*}
\int_{\mathbb{R}}E \big[ e^{i\,\big(u_1\,X_1(t-)+u_2\,X_2(t-) + u_2 \,c\,y \big)}  \big]
\,dF(y) & = E \bigg[ e^{i\,\big(u_1\,X_1(t-)+u_2 \,X_2(t-)\big)} \bigg] \int_{\mathbb{R}} e^{i u_2 cy } \,dF(y) \\
& = \varphi(\boldsymbol{u},t) \phi(cu_2)
\end{align*}
where $\phi$ is the characteristic function of the jumps, the above simplifies to 
\begin{align}
\frac{\partial \varphi(\boldsymbol{u},t)}{\partial t}&=u_1\,\frac{\partial
\varphi(\boldsymbol{u},t)}{\partial u_2}-\nu^2\,u_2\,\frac{\partial
\varphi(\boldsymbol{u},t)}{\partial u_1}+\alpha\,\nu^2\,u_2\,\frac{\partial^3
\varphi(\boldsymbol{u},t)}{\partial u_1^3}-2\,\zeta\,\nu\,u_2\,\frac{\partial
\varphi(\boldsymbol{u},t)}{\partial u_2}\nonumber\\
&-\frac{b^2\,u_2^2}{2}\,\varphi(\boldsymbol{u},t)
+\lambda \varphi(\boldsymbol{u},t)  \,[\phi (cu_2)-1].
%\,\bigg(\int_{\mathbb{R}}\varphi(u_1,u_2\,(1+cy))\,dF(y)
%-\varphi(u,t)\bigg).
%
\label{eq:DuffingCHFGeneralPDE}
\end{align}

\subsection{Fokker-Planck equation} \label{subsec:FPeqn}

If $\boldsymbol{c}= \boldsymbol{0}_{d \times m_C}$ (a $d \times m_C$ matrix of zeros) in \eqref{eq:diffusionEq}, i.e. the Poisson white noise is absent, the Fokker-Planck equation can be recovered \cite[p. 482]{book:Grigoriu2002} by applying the Fourier transform to \eqref{eq:CharFunPDE}. The pdf of $\boldsymbol{X}(t)$ satisfies the PDE
\begin{align} \label{eq:FPeqn}
\frac{\partial f(\boldsymbol{x},t)}{\partial t} = -\sum_{i=1}^d \frac{\partial}{\partial x_i} [a_i(\boldsymbol{x}) \, f(\boldsymbol{x},t)] + \frac{1}{2}\sum_{i,j=1}^d \frac{\partial^2}{\partial x_i \partial x_j} \bigl [ (\boldsymbol{b}(\boldsymbol{x}) \boldsymbol{b}(\boldsymbol{x})')_{ij} \, f(\boldsymbol{x},t) \bigr]
\end{align}
for $(\boldsymbol{x},t) \in \mathbb{R}^d \times [0,T]$  subject to the constraint
\begin{align} \label{eq:FPconstraint}
\int_{\mathbb{R}^d} f(\boldsymbol{x},t) \, d \boldsymbol{x} = 1, \,\,\, t \in [0,T]
\end{align}
and the boundary conditions
\begin{align*}
& \lim_{|x_i| \rightarrow \infty} a_i(\boldsymbol{x}) f(\boldsymbol{x},t) = 0, \\
& \lim_{|x_i| \rightarrow \infty} (\boldsymbol{b}(\boldsymbol{x})\boldsymbol{b}(\boldsymbol{x})')_{ij} \, f(\boldsymbol{x},t) = 0,\\
& \lim_{|x_i| \rightarrow \infty} \frac{\partial}{\partial x_i} [(\boldsymbol{b}(\boldsymbol{x})\boldsymbol{b}(\boldsymbol{x})')_{ij} \, f(\boldsymbol{x},t)] = 0,  
\end{align*}
for $i,j=1,\dots,d$ where $a_i(x)$ is the $i$-th row of $\boldsymbol{a}(x)$ while $(\boldsymbol{b}(\boldsymbol{x}) \boldsymbol{b}(\boldsymbol{x})')_{ij}$ is the $(i,j)$-th component of this $d \times d$ matrix. The Fokker-Planck equation is a parabolic PDE whose maximum order of partial derivatives is at most 2 unlike the PDE for the chf, e.g.~\eqref{eq:VerhulstGeneral},~\eqref{eq:DuffingCHFGeneralPDE}.

If the SDE is driven by Poisson white noise, an extended Fokker-Planck equation can be derived under special cases~\cite{paper:Grigoriu2004,book:Grigoriu2002}. To illustrate, if $X(t) \in \mathbb{R}$ satisfies the SDE
\begin{align*}
d X(t) = -\rho X(t-) \,dt + dC(t), \,\,\, t \ge 0
\end{align*}
where $\rho > 0$, $C(t) = \sum_{w=1}^{N(t)} Y_w$ is a compound Poisson process that depends on the Poisson process $N(t)$ with intensity $\lambda$ and $\{Y_w\}$ being independent copies of $Y$, applying \eqref{eq:CharFunPDE}, the PDE for the characteristic function of $X(t)$ is
\begin{align*}
\frac{\partial \varphi (u,t)}{\partial t} = -\rho u \frac{\partial \varphi (u,t)}{\partial u} + \lambda \left (E[e^{iuY}]-1 \right) \varphi(u,t).
\end{align*}
Following \cite[p. 484]{book:Grigoriu2002}, the Fourier transform of this PDE yields 
\begin{align} \label{eq:FPPWN}
\frac{\partial f(x,t)}{\partial t} = \rho \frac{\partial}{\partial x} (xf(x,t)) + \lambda \sum_{k=1}^{\infty} \frac{(-1)^k E[Y^k]}{k!} \frac{\partial^k f(x,t)}{\partial x^k}
\end{align}
under some conditions which include that $Y$ has finite moments of any order.

We finally remark that for some systems driven by L\'evy white noise with $\alpha$-stable random variable increments, a PDE for the state chf may be formulated while it may not be possible to derive a Fokker-Planck type PDE for the pdf if $\alpha \in (0,2)$ \cite[Ex. 7.34]{book:Grigoriu2002}. 

\section{Neural network-based representation for the state pdf} \label{sec:NNapproxPDE}

We illustrate how a neural network can be trained to approximate solutions to the differential equations introduced in Section~\ref{sec:CharFunFPPDE}. Section~\ref{subsec:NNtrainingforPDFCHF} discusses the formulation of the loss function of the neural network to incorporate the constraints of these 2 types of differential equations. The physics-informed neural network approach is  then applied  to compute the pdf and the chf of the Brownian motion for which analytical solutions are available. Finally, a comparison is made in Section~\ref{subsec:CompCHFvsPDF} which elaborates on the advantages and disadvantages of solving each differential equation using neural networks from an analytical and numerical perspective. 

\subsection{Training neural networks to approximate the pdf and the chf} \label{subsec:NNtrainingforPDFCHF}

%We discuss how neural networks can be utilized to solve the PDEs introduced in Sections~\ref{subsec:CharFunPDE} and~\ref{subsec:FPeqn} and how they must be adapted to incorporate the constraints of these PDEs.

We discuss how neural networks can be trained to solve the differential equations introduced in Sections~\ref{subsec:CharFunPDE} and~\ref{subsec:FPeqn} and comment on their approximation quality.

An important constraint for the Fokker-Planck equation \eqref{eq:FPeqn} is that the pdf must integrate to 1 at all times. It was observed in \cite{paper:AlAradiCNJS2018,paper:AlAradiCNJS2020} that failure to enforce this constraint in the neural network training resulted in a pdf that did not match the analytical solution. It was therefore proposed to represent $f(x,t)$ by
\begin{align} \label{eq:FPtransformation}
f(\boldsymbol{x},t) = \frac{e^{-v(\boldsymbol{x},t)}}{\int_{\mathbb{R}^d} e^{-v(\boldsymbol{x},t)} \, d \boldsymbol{x}}
\end{align} 
for some function $v(\boldsymbol{x},t)$ so that $f(x,t)$ is positive and integrates to 1. Hence, if $\mathcal{N}$ represents the operator of the Fokker-Planck PDE, instead of solving for $f(\boldsymbol{x},t)$ such that $\mathcal{N}[f(\boldsymbol{x},t)] = 0$ and \eqref{eq:FPconstraint} holds, we solve for $v(\boldsymbol{x},t)$ such that $\mathcal{M}[v(\boldsymbol{x},t)] = 0$ for some operator $\mathcal{M}$. Our objective is therefore to find a neural network approximation $\widetilde{v}(\boldsymbol{x},t)$ for which $\mathcal{M}[\widetilde{v}(\boldsymbol{x},t)] = 0$ is satisfied so that an approximation $\widetilde{f}(\boldsymbol{x},t)$ to the state pdf $f(\boldsymbol{x},t)$ can be obtained via $\widetilde{f}(\boldsymbol{x},t) = \frac{e^{-\widetilde{v}(\boldsymbol{x},t)}}{\int_{\mathbb{R}^d} e^{-\widetilde{v}(\boldsymbol{x},t)} \, d \boldsymbol{x}}$.

Suppose that the pdf $f(\boldsymbol{x},0) = \frac{e^{-v(\boldsymbol{x},0)}}{\int_{\mathbb{R}^d} e^{-v(\boldsymbol{x},0)} \, d \boldsymbol{x}}$ of $\boldsymbol{X}(0)$ is available. For a specified neural network architecture (number of hidden layers, number of neurons per hidden layer, and type of activation function), Algorithm~\ref{alg:NN-FP} summarizes how a neural network approximation $\widetilde{v}(\boldsymbol{x},t)$ for $v(\boldsymbol{x},t)$ on $(\boldsymbol{x},t) \in \mathbb{R}^{d+1} \times [0,T]$ can be obtained. The input layer in this case consists of $d+1$ neurons while there is only 1 neuron in the output layer.
\begin{algorithm}[H]
\caption{Training neural networks to solve the Fokker-Planck equation}
\begin{algorithmic}[1]
  %\scriptsize
  \STATE Truncate the spatial domain by choosing a compact $D \subset \mathbb{R}^d$ sufficiently large so that most of the probability mass of $\boldsymbol{X}(t)$ is contained in $D$
  \STATE Select $N_{Op}$ collocation points $\{(\boldsymbol{x}^{Op}_i,t^{Op}_i)\}_{i=1}^{N_{Op}} \subset D \times [0,T]$ to enforce the governing equations 
  \STATE Select $N_{IC}$ collocation points $\{(\boldsymbol{x}^{IC}_i,0)\}_{i=1}^{N_{IC}} \subset D \times 0$ to enforce the initial condition
  \STATE Solve for the neural network parameters to minimize the loss
  \begin{align} \label{eq:FP-loss}
	\mathcal{L} =   \frac{1}{N_{Op}} \sum_{i=1}^{N_{Op}} (\mathcal{M}[\widetilde{v}(\boldsymbol{x}_i^{Op},t_i^{Op})])^2 & + \frac{1}{N_{IC}} \sum_{i=1}^{N_{IC}} (\widetilde{v}(\boldsymbol{x}_i^{IC},0)-v(\boldsymbol{x}_i^{IC},0))^2 
  \end{align}
\end{algorithmic}
\label{alg:NN-FP}
\end{algorithm}
Since the Fokker-Planck equation is generally defined on an unbounded spatial domain, enforcing the boundary conditions is numerically challenging regardless of the numerical scheme employed. However, once the optimal neural network parameters are found, $\widetilde{v}(\boldsymbol{x},t)$ and its derivatives can be queried for any point $(\boldsymbol{x},t) \in D \times [0,T]$ via automatic differentiation in TensorFlow. This can be used to verify that the boundary conditions are met.

We remark that \cite{paper:XuZLZLK2020} also pursued a neural network solution to the steady-state Fokker Planck equation ($\frac{\partial f(\boldsymbol{x},t)}{\partial t} = 0$) but did not utilize the transformation \eqref{eq:FPtransformation} to incorporate the normalization constraint. Instead, if $\widetilde{f}(\boldsymbol{x})$ is the neural network approximation to the steady-state pdf, their loss function included a discretized version of $(\int_{\mathbb{R}^d} f(\boldsymbol{x}) \,d \boldsymbol{x}-1)^2$ in addition to enforcing the governing PDE and the boundary condition. This approach does not guarantee that $\widetilde{f}(x) \ge 0$ and moreover, it is not clear how many time points are needed to incorporate the constraint~\eqref{eq:FPconstraint} to the loss function for solving the time-varying Fokker Planck equation. In the remainder of this work, we adopt the approach in \cite{paper:AlAradiCNJS2018,paper:AlAradiCNJS2020}.

Unlike the probability density function, the characteristic function of a random vector is generally complex-valued. This means that the output layer of a neural network representation $\widetilde{\varphi}(\boldsymbol{u},t)$ of $\varphi(\boldsymbol{u},t)$ has 2 neurons, each corresponding to the real and imaginary parts of $\varphi(\boldsymbol{u},t)$. In some cases, it will be demonstrated in Section~\ref{sec:NumExp} that probabilistic arguments on $\boldsymbol{X}(t)$ can be made to show that $f(\boldsymbol{x},t)$ is symmetric about $\boldsymbol{x}=\boldsymbol{0}$ thereby showing that $\varphi(\boldsymbol{u},t)$ is real-valued. Assuming that a differential equation for the chf can be derived, denote its governing equation by $\mathcal{Q}[\varphi(\boldsymbol{u},t)] = 0$ for some operator $\mathcal{Q}$. For a specified architecture, Algorithm~\ref{alg:NN-CHF} elaborates how a neural network approximation can be obtained for $\varphi(\boldsymbol{u},t)$ on $(\boldsymbol{u},t) \in \mathbb{R}^{d+1} \times [0,T]$ assuming that the chf $\varphi(\boldsymbol{u},0)$ of $\boldsymbol{X}(0)$ is available. As with the Fokker-Planck equation, the input layer has $d+1$ neurons.

% truncate the domain
% mention the integral term
% mention that collocation points are gridded
% mention BCs

\begin{algorithm}[H]
\caption{Training neural networks to solve the diff. eq. for the characteristic function}
\begin{algorithmic}[1]
  %\scriptsize
  \STATE Truncate the frequency domain by choosing a compact $D \subset \mathbb{R}^d$ sufficiently large so that $D$ contains most of the frequencies of the chf of $\boldsymbol{X}(t)$
  \STATE Select $N_{Op}$ collocation points $\{(\boldsymbol{u}^{Op}_i,t^{Op}_i)\}_{i=1}^{N_{Op}} \subset D \times [0,T]$ to enforce the governing equations 
  \STATE Select $N_{IC}$ collocation points $\{(\boldsymbol{u}^{IC}_i,0)\}_{i=1}^{N_{IC}} \subset D \times 0$ to enforce the initial condition
  \STATE Select $N_{0}$ collocation points $\{(\boldsymbol{0},t_i^0)\}_{i=1}^{N_{0}} \subset \boldsymbol{0} \times [0,T]$ to enforce the condition at the origin
  \STATE Solve for the neural network parameters to minimize the loss
  \begin{align} \label{eq:CHF-loss}
	\mathcal{L} =   \frac{1}{N_{Op}} \sum_{i=1}^{N_{Op}} \bigg\vert \mathcal{Q}[\widetilde{\varphi}(\boldsymbol{u}_i^{Op},t_i^{Op})]\bigg \vert^2 & + \frac{1}{N_{IC}} \sum_{i=1}^{N_{IC}} \bigg \vert\widetilde{\varphi}(\boldsymbol{u}_i^{IC},0)-\varphi(\boldsymbol{u}_i^{IC},0)\bigg \vert^2  + \frac{1}{N_{0}} \sum_{i=1}^{N_{0}} \bigg \vert \widetilde{\varphi}(\boldsymbol{0},t_i^0)-1 \bigg \vert^2
  \end{align}
\end{algorithmic}
\label{alg:NN-CHF}
\end{algorithm}
In the loss function \eqref{eq:CHF-loss}, $\vert \cdot \vert$ refers to the magnitude of a complex number since $\widetilde{\varphi}(\boldsymbol{u},t)$ is complex-valued. As in the Fokker-Planck equation, it is numerically challenging to incorporate the boundary conditions of the differential equation for the chf however, once the neural network approximation is attained, it can be queried to ensure that such conditions are met. As for the constraint that $|\varphi(\boldsymbol{u},t)| \le 1$, it may be possible to apply a transformation to $\varphi(\boldsymbol{u},t)$ to impose this condition. Our numerical experiments revealed that the resulting neural network solution does not violate this constraint.

%\subsection{Application to an analytical example} \label{subsec:AnalytIntBM}

In order to assess the quality of the neural network approximation to the PDE solution, it was demonstrated in \cite[Theorem 7.1]{paper:SirignanoS2018} that if a neural network with a single hidden layer is used to approximate the solution to a class of quasilinear parabolic PDEs,  the loss function approaches zero as the number of neurons increases. This holds if the terms in the PDE satisfy a Lipschitz condition. The proof of this result can be adapted and extended to other types of PDEs provided that similar Lipschitz-type conditions are met. Furthermore, \cite[Theorem 7.2]{paper:SirignanoS2018} shows that the neural network approximation converges to the unique PDE solution under additional assumptions on the parabolic PDE.  While this result can be applied to the Fokker-Planck equation, it is not directly applicable to the chf PDE since the order of partial derivatives of the latter can exceed 2.

Following the above discussion on training neural networks to solve the Fokker-Planck equation or the differential equation for the chf, we apply this methodology to a simple example for which the analytical solution of both the pdf (Example~\ref{ex:BMFP}) and the chf (Example~\ref{ex:BMCHF}) are available for all time. 

\begin{example} \label{ex:BMFP}
Let $X(t)$ satisfy $dX(t) = \sigma dB(t), t \in [0,1]$, with $X(0) \sim N(0,\nu)$ where $\sigma, \nu = 1$ and $B(t)$ is the Brownian motion. We use physics-informed neural networks to solve the Fokker-Planck equation and reconcile the approximation with the analytical solution.
\end{example}

The Fokker-Planck equation for $X(t)$ is given by
\begin{align*}
\frac{\partial f(x,t)}{\partial t} - \frac{1}{2} \sigma^2 \frac{\partial^2 f(x,t)}{\partial x^2} = 0
\end{align*}
whose analytical solution can be readily verified as $f(x,t) = \frac{1}{\sqrt{2\pi (\nu + \sigma^2 t)}} e^{-\frac{x^2}{2(\nu+\sigma^2 t)}}$.
Applying the transformation $f(x,t) = \frac{e^{-v(x,t)}}{\int_{-\infty}^{\infty} e^{-v(x,t)} \,dx}$ yields the following PDE for $v(x,t)$:
\begin{align}\label{eq:BMFPtransformed}
\mathcal{M}[v(x,t)] = \frac{\partial v(x,t)}{\partial t} - \frac{\sigma^2}{2} \left( \frac{\partial^2 v(x,t)}{\partial x^2} -\left(\frac{\partial v(x,t)}{\partial x} \right)^2 \right) - \frac{\int_{-\infty}^{\infty} e^{-v(x,t)} \frac{\partial v(x,t)}{\partial t} \,dx}{\int_{-\infty}^{\infty} e^{-v(x,t)} \,dx} = 0,
\end{align}
for which we seek a neural network approximation. Observe that \eqref{eq:BMFPtransformed} does not have a unique solution because if $v(x,t)$ satisfies \eqref{eq:BMFPtransformed} then so does $v(x,t) + g(t)$ for some differentiable function $g$ with $g(0) = 0$. Analytically, this does not pose an issue since we still recover $f(x,t)$ but as will be noted in the applications in Section~\ref{sec:NumExp}, it may be a source of numerical issues which was not explored in \cite{paper:AlAradiCNJS2018,paper:AlAradiCNJS2020}.

The neural network architecture we used to solve~\eqref{eq:BMFPtransformed} on the truncated spatial domain $D = [-7,7]$  is comprised of an input layer with 2 neurons, 4 hidden layers with 100 neurons each, and an output layer with a single neuron. Hyperbolic tangent activation functions were utilized in all simulations in this work. To construct the loss function \eqref{eq:FP-loss} for Example~\ref{ex:BMFP}, we generated a regular grid of $N_{Op} = 151 \times 101$ points in the input domain $[-7,7]\times [0,1]$ to enforce the governing equation while $N_{IC}=100$ points were randomly chosen among the 151 equally spaced points in $D$ to impose the initial condition. The same mesh on $D$ was used to numerically approximate the integral terms that appear in the operator \eqref{eq:BMFPtransformed}. The value of the loss function \eqref{eq:FP-loss} of the trained neural network is $7.994686 \times 10^{-7}$. Figure~\ref{fig:BMFPAnalyVSNN} examines the performance of the neural network approximation compared to the analytical solution. The left panel displays plots of $v(x,t) = \frac{x^2}{2(1+t)}$ (solid) and $\widetilde{v}(x,t) $ (dashed) for $t=1$. As can be seen, $\widetilde{v}(x,t)$ is a shifted version of $v(x,t)$ because \eqref{eq:BMFPtransformed} does not have a unique solution. This behavior was also observed for other values of $t$. However, the middle panel indicates that $f(x,t)$ (solid) and $\widetilde{f}(x,t)$ (dashed) at $t=1$ coincide once the solution is normalized because~\eqref{eq:FPtransformation} is unaffected by the vertical shift in the left panel. The right panel plots the error $\max_x |f(x,t)-\widetilde{f}(x,t)|$ thereby confirming that the neural network is able to recover the analytical solution.

\begin{figure}[h!]
		\centering
		\includegraphics[width = 0.32\textwidth]		
						{./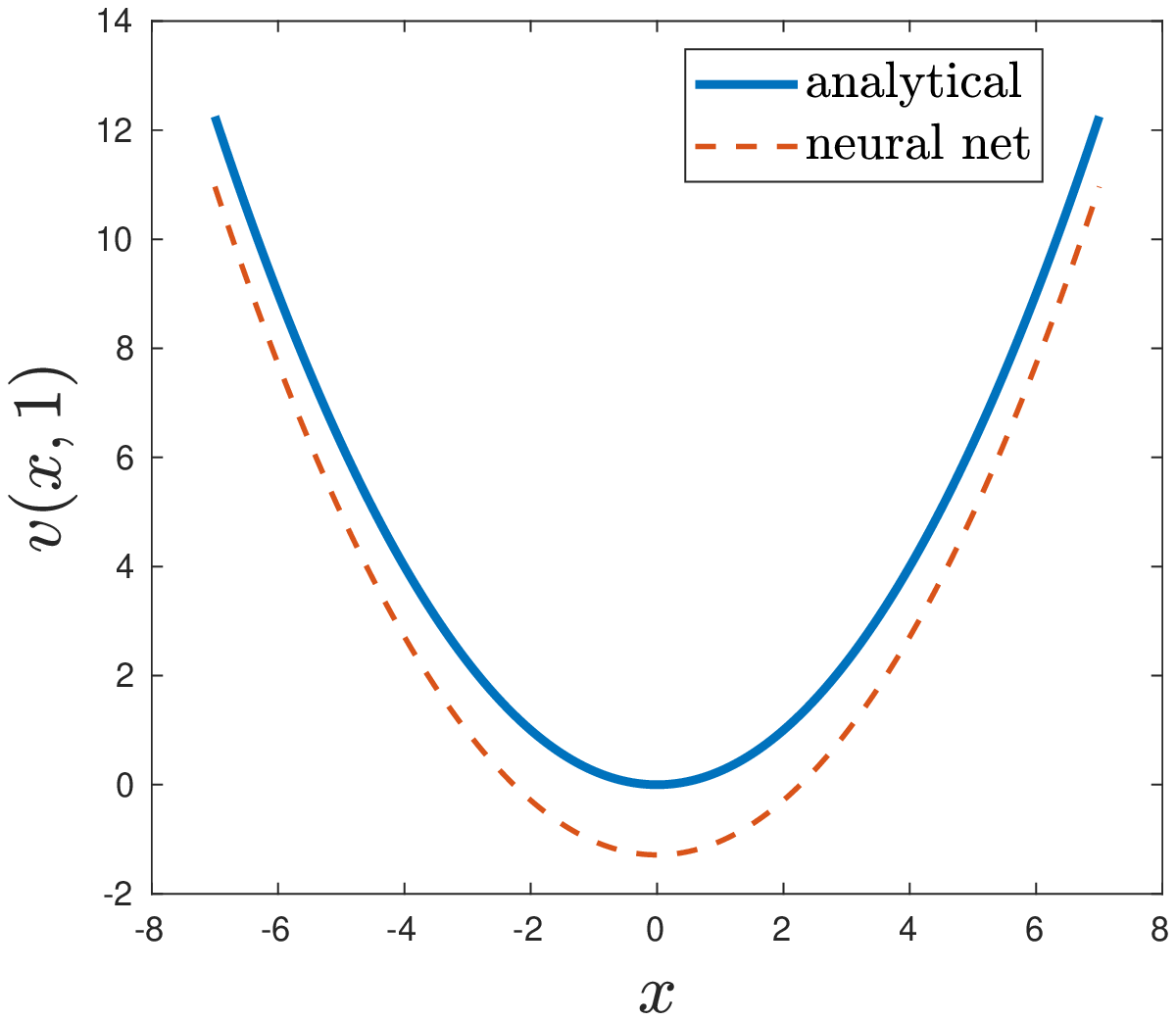}
		\includegraphics[width = 0.34\textwidth]		
						{./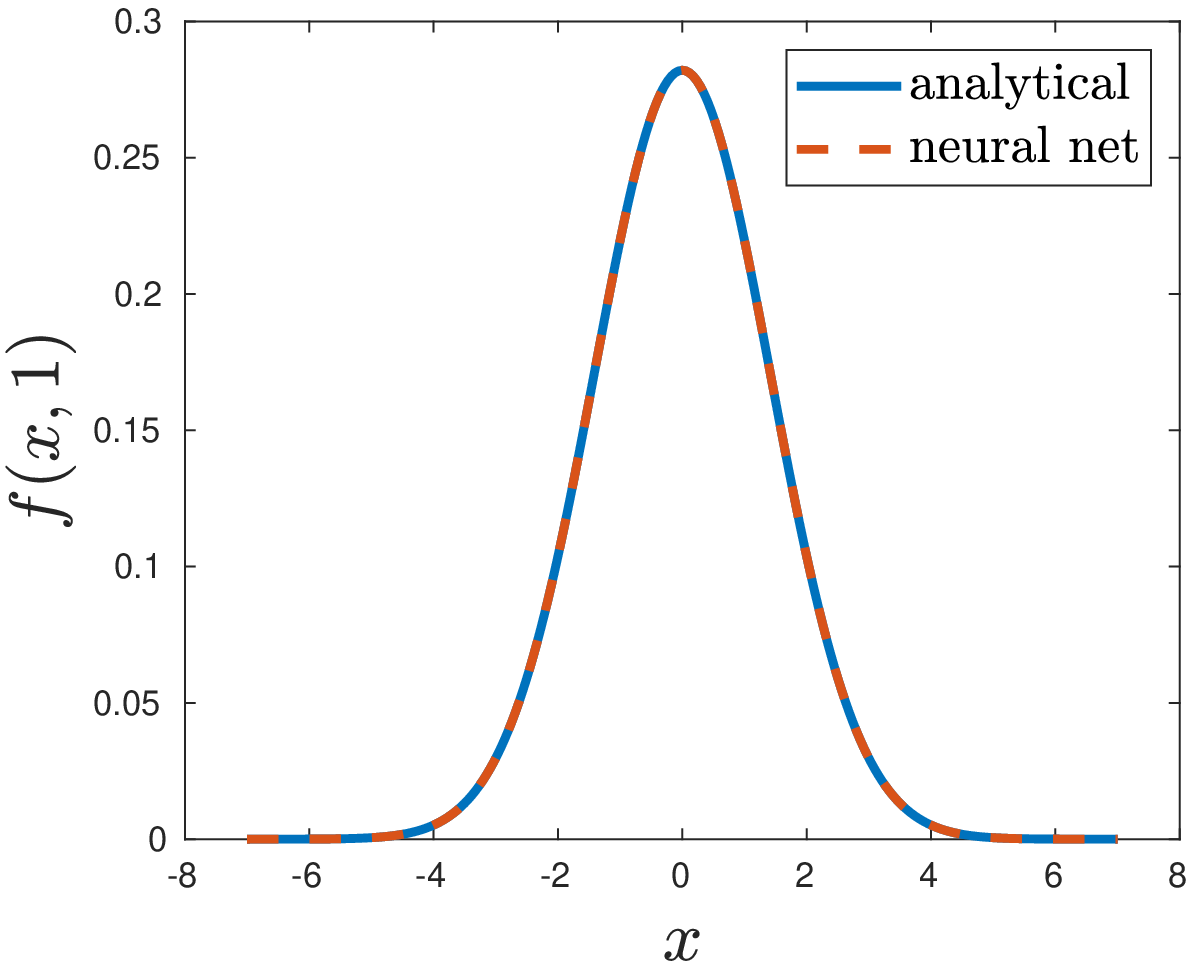}
		\includegraphics[width = 0.32\textwidth]		
						{./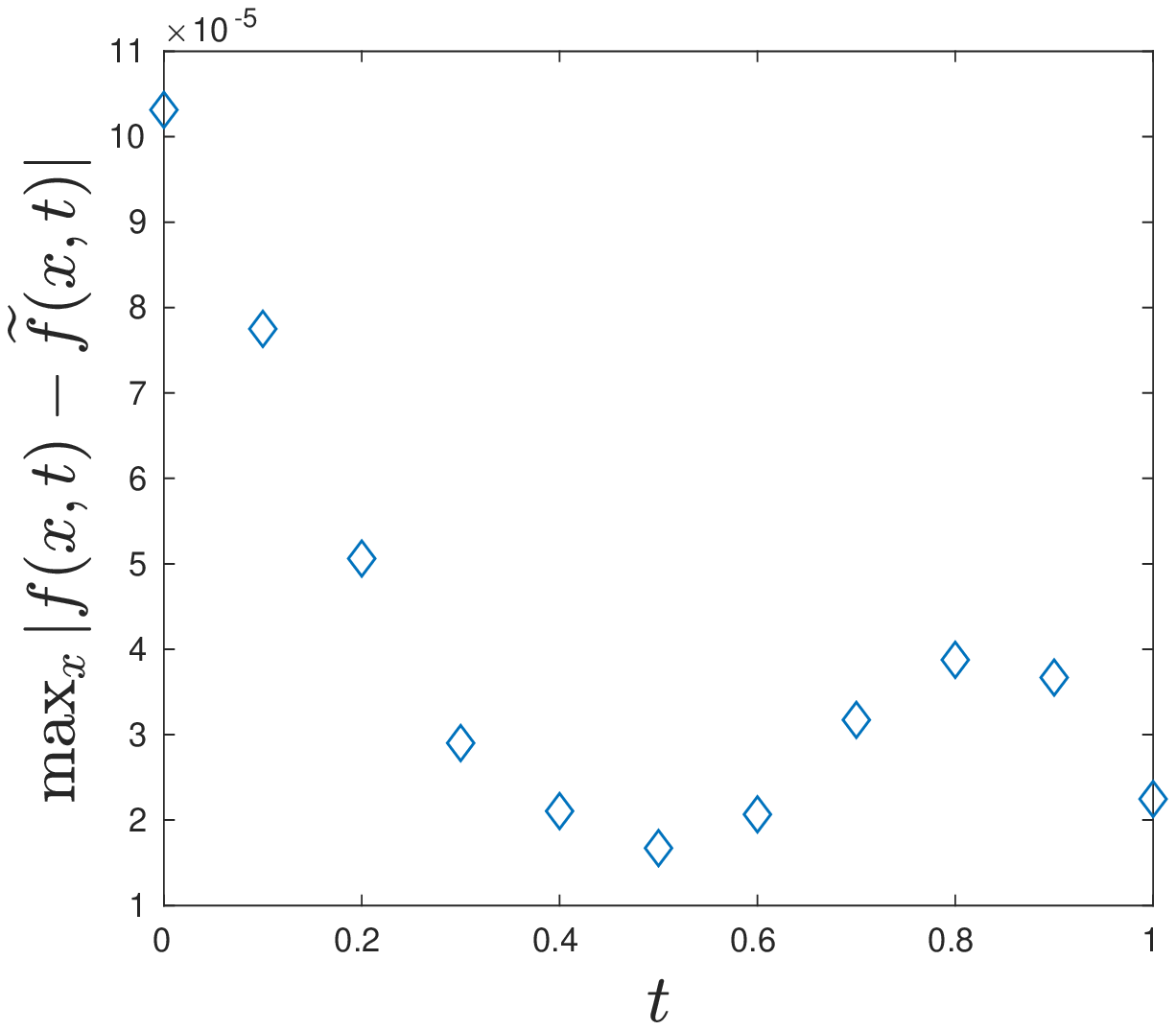}
		\caption{Comparison between the neural network approximation and the analytical solution of the Fokker-Planck equation for Example~\ref{ex:BMFP}. Left: $v(x,1)$ (solid) vs $\widetilde{v}(x,1)$ (dashed). Middle: $f(x,1)$ (solid) vs $\widetilde{f}(x,1)$ (dashed). Right: $\max_x |f(x,t) - \widetilde{f}(x,t)|$ for $t = [0,1]$ with increments of 0.1.}\label{fig:BMFPAnalyVSNN}
\end{figure}

% and the PDE for the chf of $X(t)$
\begin{example} \label{ex:BMCHF}
Let $X(t)$ satisfy the SDE and initial conditions in Example~\ref{ex:BMFP}. A physics-informed neural network is trained to solve the PDE for the chf of $X(t)$ which is then reconciled with the analytical solution.
\end{example}
The PDE for the chf of $X(t)$ has the form (see \eqref{eq:CharFunPDE})
\begin{align} \label{eq:BMCHFpde}
\mathcal{Q}[\varphi(u,t)] = \frac{\partial \varphi(u,t)}{\partial t} + \frac{1}{2}u^2 \sigma^2 \varphi(u,t) = 0
\end{align}
that admits the analytical solution $\varphi(u,t) = e^{-\frac{1}{2}(\nu +\sigma^2 t)u^2}$ which is the characteristic function of a Gaussian random variable with mean $0$ and variance $\nu + \sigma^2t$. Without knowledge of this analytical solution, we anticipate that the chf is real-valued since $X(t)$ is a scaled Brownian motion which is a Gaussian process with mean 0. The pdf of $X(t)$ is therefore symmetric with respect to the spatial origin.

% CHF loss: 

The same neural network architecture in Example~\ref{ex:BMFP} was implemented for this example with $D = [-7,7]$ as the truncated frequency domain for $\varphi(x,t)$. To formulate the loss function \eqref{eq:CHF-loss} for Example~\ref{ex:BMCHF}, we used $N_{Op} = 15000$ latin hypercube samples \cite{paper:Stein1987} in $D \times [0,1]$, $N_{IC} = 100$ random points in $D$, and $N_0 = 100$ equally spaced points in $[0,1]$ so that the governing equation, initial condition, and condition at the origin hold, respectively.  The trained neural network attained a loss function value of $3.4449415\times 10^{-6}$. Figure~\ref{fig:BMCHFAnalyVSNN} exhibits the comparison between the neural network approximation and the known analytical soltion. The left panel highlights that $\widetilde{\varphi}(u,t)$ (dashed) matches $\varphi(u,t)$ (solid) for $t=1$ while the right panel affirms that the neural network solution is comparable to the analytical solution based on the plotted values of $\max_u |\varphi(u,t)-\widetilde{\varphi}(u,t)|$ for various time points.

\begin{figure}[h!]
		\centering
		\includegraphics[width = 0.42\textwidth]		
						{./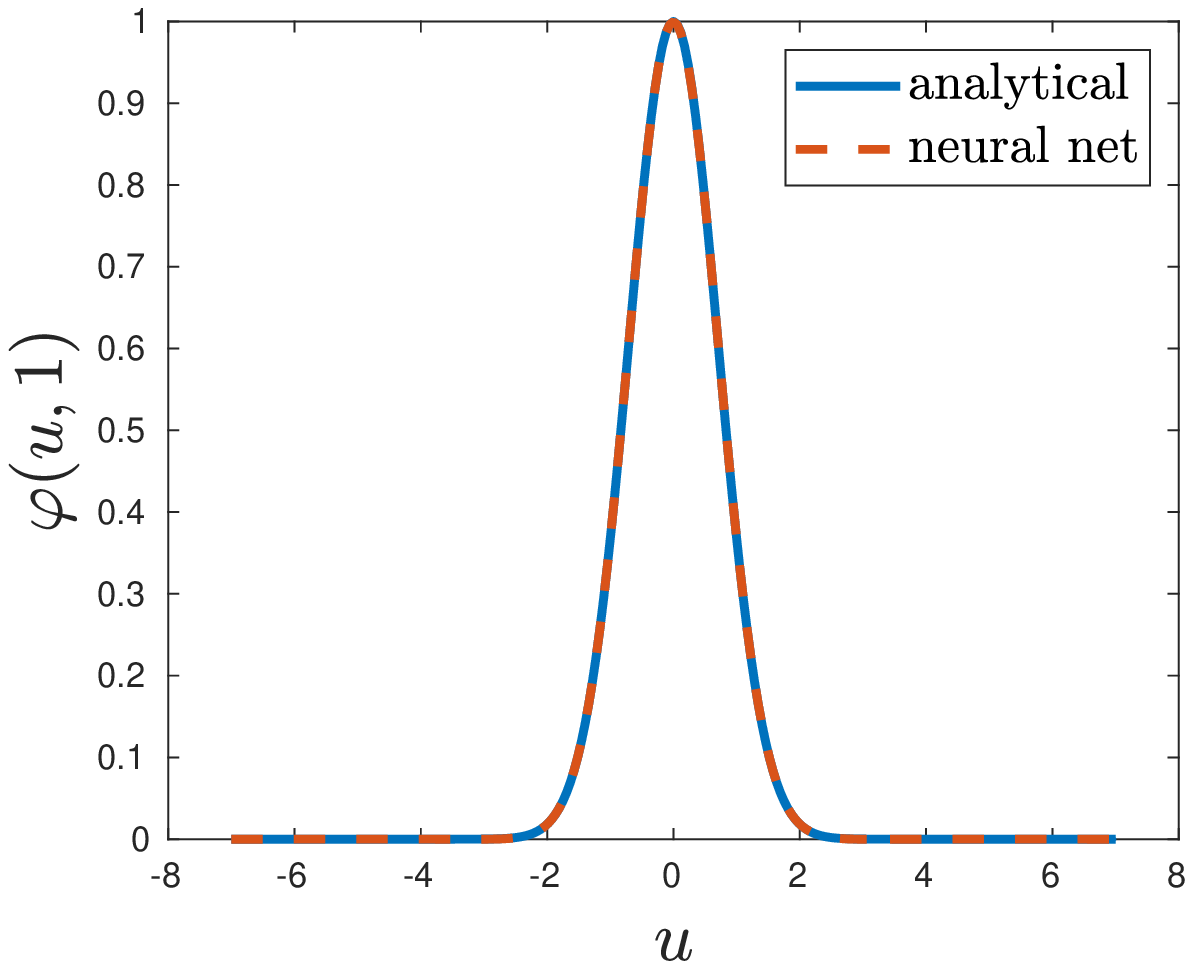} 
						\hspace{3em}
		\includegraphics[width = 0.4\textwidth]		
						{./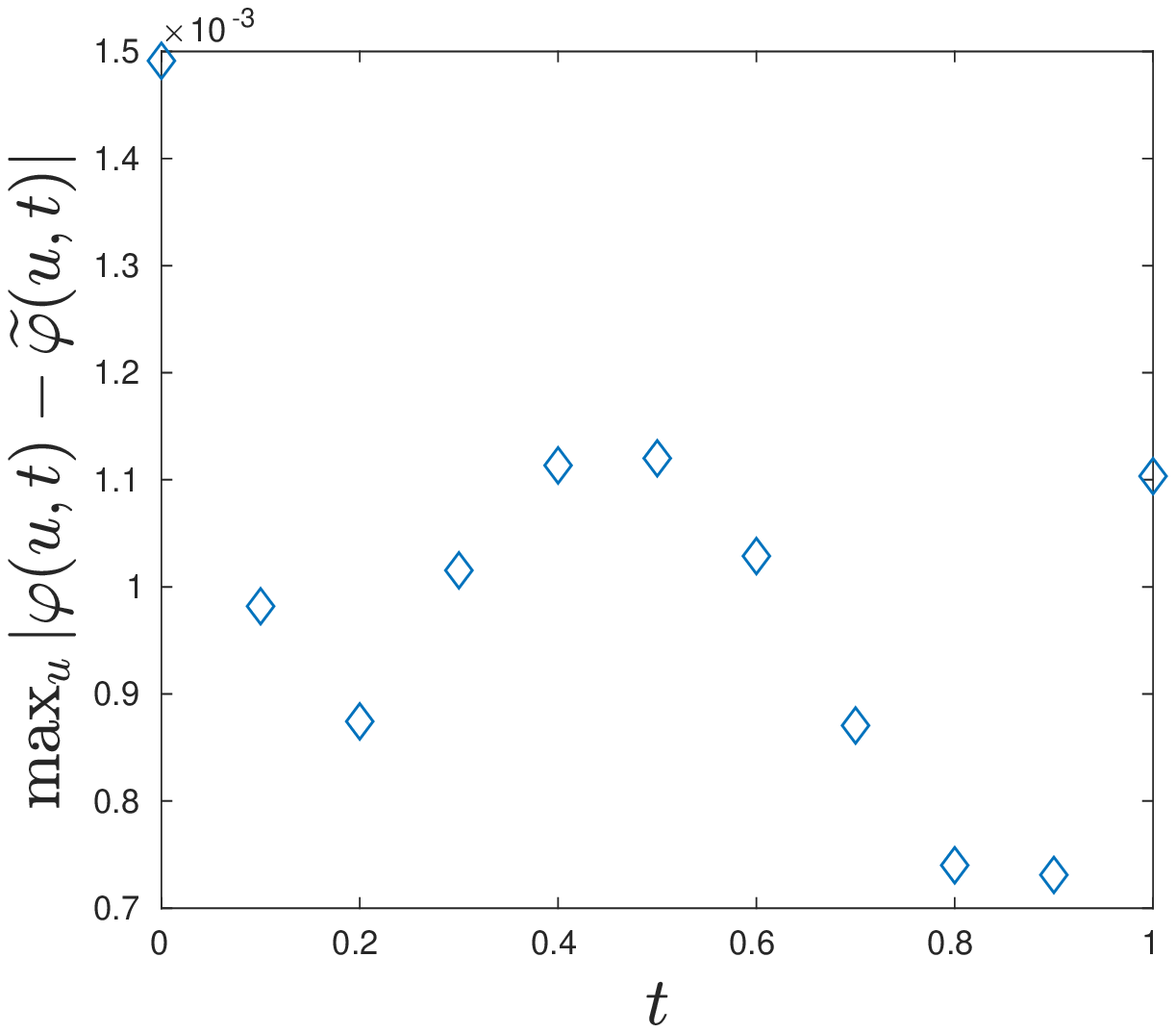}
		\caption{Comparison between the neural network approximation and the analytical solution for the chf PDE in Example~\ref{ex:BMCHF}. Left: $\varphi(u,1)$ (solid) vs $\widetilde{\varphi}(u,1)$ (dashed). Right: $\max_u |\varphi(u,t) - \widetilde{\varphi}(u,t)|$ for $t = [0,1]$ with increments of 0.1.}
		\label{fig:BMCHFAnalyVSNN}
\end{figure}

\subsection{Comparison between the differential equations for the chf and the pdf} \label{subsec:CompCHFvsPDF}

\begin{table}[h!]
\tabulinesep=6pt
\begin{tabu}{>{\cellcolor{white}\color{black}}r |X[cm] |X[cm]}
\hline
\hline
\rowcolor{white}\strut  & \color{black} Fokker-Planck PDE & \color{black} Chf diff. eq. \\
\hline
Can be derived? & available for GWN forcing but may be unavailable otherwise & polynomial drift and diffusion and special structure for jump coefficient \\
Dimension & 1 (real-valued) & 2 (complex-valued) \\
Transformation & needed to incorporate normalization constraint & none \\
Diff. eq. type & integro-differential equation if transformation applied & generally an integro-differential equation \\
Derivative order & at most 2 for GWN forcing & max degree of polynomial coefficient \\
Post-processing & none & Fourier transform \\
\hline
\end{tabu}
\caption{Comparison between the Fokker-Planck equation and the differential equation for the chf.} \label{table:FPvsCHFpde}
\end{table}

Our objective in this work is to obtain the pdf of the state $\boldsymbol{X}(t)$ that satisfies a stochastic differential equation.  
From the subsections above, two approaches have been presented to accomplish this, namely, solving the Fokker-Planck equation and solving the differential equation for the characteristic function of $\boldsymbol{X}(t)$ and computing its Fourier transform. In the following, the advantages and disadvantages of each approach are outlined, see Table~\ref{table:FPvsCHFpde} for a summary.

The main drawback of the Fokker-Planck equation is that it may not be possible to derive a PDE for the pdf of $\boldsymbol{X}(t)$ if the forcing term is not Gaussian white noise (GWN). Even if an extended Fokker-Planck equation as in \eqref{eq:FPPWN} can be deduced, it may not be favorable to solve this PDE using any numerical scheme because truncating the series expression may introduce numerical errors if the jumps have non-zero moments of any order. From a numerical perspective, enforcing the constraint \eqref{eq:FPconstraint} can be challenging. While a transformation such as \eqref{eq:FPtransformation} resolves this issue, our numerical experiments in Section~\ref{sec:NumExp} reveal that this may not be suitable in high dimensions. In particular, the non-uniqueness of the solution to the PDE for $v(\boldsymbol{x},t)$ and the absence of constraints on $v(\boldsymbol{x},t)$ imply that it is likely for $v(\boldsymbol{x},t)$ to be very negative rendering  $\int_{\mathbb{R}^d} e^{-v(\boldsymbol{x},t)} \,d \boldsymbol{x}$ impossible to be approximated numerically. The loss function $\mathcal{L}$ becomes nan as a result and the optimization algorithm is unable to continue searching for a local minimum.

However, if the Fokker-Planck equation can be derived for $\boldsymbol{X}(t)$ and if the constraint can be seamlessly incorporated, solving for the pdf in this manner is convenient since the solution to the Fokker-Planck PDE is already the quantity of interest, $f(\boldsymbol{x},t)$, which does not need to be post-processed. For systems with Gaussian white noise forcing, the maximum number of derivatives in the PDE is at most 2.

In contrast, the differential equation for the characteristic function can only be derived if the drift and diffusion coefficients are polynomials and that the diffusion matrix for the Poisson white noise forcing has a special structure. Despite this, an advantage of this approach is that there are scenarios \cite{book:Grigoriu2002} for which a differential equation for the chf can be derived while a PDE for the pdf is not available, especially when the forcing term has jumps such as the Poisson and L\'evy white noise. In addition, the constraints of this equation are more convenient to implement as they do not involve any normalization.

The disadvantages of solving the differential equation for the chf include the fact that the chf is generally complex-valued. This implies that a system of differential equations needs to be solved. From \eqref{eq:CharFunPDE}, it can be observed that the chf differential equation is an integro-differential equation in which the maximum order of the derivatives is equivalent to the highest polynomial degree of the drift and diffusion coefficient. Finally, the solution to the differential equation has to be post-processed via the Fourier transform to derive the pdf of $\boldsymbol{X}(t)$.

%\taburowcolors[2]{white .. black!20}

%\sffamily
%\footnotesize

\section{Applications} \label{sec:NumExp}

We investigate the capabilities of physics-informed neural networks to solve the Fokker-Planck equation or the differential equation for the characteristic function that arise from various diffusion processes. 
Since the analytical solution to these equations is unavailable, the target solution is approximated via Monte Carlo simulation. The applications below serve to highlight the advantages and disadvantages outlined in Section~\ref{subsec:CompCHFvsPDF} and also demonstrate how the neural network solution can be utilized to study probabilistic phenomenon. They also result in differential equations which are different from the ones tackled in \cite{paper:RaissiPK2019,paper:XuZLZLK2020,paper:SirignanoS2018,paper:AlAradiCNJS2018,paper:AlAradiCNJS2020}.

Section~\ref{subsec:VerhulstGWN} and~\ref{subsec:VerhulstPWN} consider the 1-dimensional Verhulst model subject to Gaussian and Poisson white noise, respectively. In the former, the Fokker-Planck equation is solved to show that the neural network solution can recover the known analytical stationary density. In the latter, the differential equation for the chf is solved which represents an example of a system of integro-differential equations. Section~\ref{subsec:DuffingFPvsCHF} is concerned with reconciling the pdf obtained from the Fokker-Planck equation and the one obtained from solving the chf PDE for the Duffing oscillator subject to Gaussian white noise. Section~\ref{subsec:DuffingPWNvsBM} revisits the Duffing oscillator and illustrates how a Poisson white noise forcing with sufficiently large jump intensity yields a chf that is similar to what would be obtained under Gaussian white noise. Finally, Section~\ref{subsec:3Dexample} deals with an example of solving the chf PDE in a 3-dimensional frequency domain.

The Python scripts written for all simulations presented is readily available upon request. We also reiterate that our objective is not to seek the optimal neural network architecture nor identify the ideal number of collocation points. Rather, we examine the feasibility of a neural network approximation for the state pdf. In practice, constructing a neural network involves a training, validation, and testing phase \cite{book:GoodfellowBC2016} with each phase relying on distinct sets of collocation points. In the simulations below, the number of collocation points for the training phase can be increased to see if the loss from the training phase decreases. The testing phase can serve to prevent the occurrence of overfitting. We do not undertake testing and validation in this work since we compare the neural network approximation with the estimate produced by Monte Carlo simulation.

\subsection{Verhulst model with Gaussian white noise} \label{subsec:VerhulstGWN}

%\begin{example} \label{ex:VerhulstGWN}
%Suppose that $X(t)$ satisfies the SDE given by the Verhulst model
%\begin{align*}
%dX(t) = (\rho X(t) - X(t)^2)\,dt + \sigma X(t) \,dB(t), \hspace{1em}t \ge 0, 
%\end{align*}
%where $B(t)$ is the Brownian motion. It will be shown that the neural network representation of the pdf of $X(t)$ coincides with the analytical stationary pdf. 
%\end{example}
Suppose that $X(t)$ satisfies the SDE given by the Verhulst model
\begin{align*}
dX(t) = (\rho X(t) - X(t)^2)\,dt + \sigma X(t) \,dB(t), \hspace{1em}t \ge 0, 
\end{align*}
where $B(t)$ is the Brownian motion. It will be shown that the neural network representation of the pdf of $X(t)$ coincides with the analytical stationary pdf. 

By applying \eqref{eq:FPeqn}, the Fokker-Planck equation for this SDE is
\begin{align} \label{eq:FPVerhulstGWN}
\frac{\partial f(x,t)}{\partial t} = -\frac{\partial }{\partial x} ((\rho x-x^2)f(x,t)) + \frac{\sigma^2}{2} \frac{\partial^2}{\partial x^2}(x^2f(x,t)).
\end{align}
Following the calculations in \cite[p. 72]{book:Grigoriu2002}, the analytical stationary density is $f_s(x) = k x^{2(\rho/\sigma^2 - 1)} e^{-2x/\sigma^2}$ for  $x > 0$ where $k$ is the normalizing constant. For the neural network implementation, we apply the transformation \eqref{eq:FPtransformation} so that we solve for $v(x,t)$ satisfying
\begin{align} \label{eq:FPVerhulstGWNTransf}
\mathcal{M}[v(x,t)] = v_t(x,t) + (-\rho + 2x + \sigma^2) -  &(x^2-\rho x + 2\sigma^2 x)v_x(x,t) +  \\ 
& \frac{\sigma^2 x^2}{2}(v_x(x,t)^2 - v_{xx}(x,t)) + \frac{c'(t)}{c(t)} = 0 \notag
\end{align}
in which $c(t)= \int_{\mathbb{R}} e^{-v(x,t)} \,dx$. In our simulations, we chose $\rho=2, \sigma = 1$ and $X(0)\sim \Gamma(k=1,\theta=1.5)$, i.e. a gamma distribution with shape $k=1$ and scale $\theta=1.5$, which implies that $f(x,0) = \frac{1}{\Gamma(k) \theta^k} e^{-\left( \frac{x}{\theta} - \ln x^{k-1}\right)}$ and $v(x,0) = \frac{x}{\theta} - (k-1)\ln x$. With these parameters, the stationary distribution can then be represented as $f_s(x) = 4x^2 e^{-2x} = 4e^{-v_s(x)}$, $v_s(x) = 2x - 2\ln x$. 

We pursued a neural network solution to \eqref{eq:FPVerhulstGWNTransf} on the truncated spatial domain $x \in [0,9]$ for $t \in [0,4]$. The network architecture is composed of 2 neurons for the input layer, 1 neuron for the output layer, and 4 hidden layers with 100 neurons each. A mesh of 151 equally spaced points in the spatial domain and 301 equally spaced points in the time domain was constructed to obtain $N_{Op} = 151 \times 301$ collocation points to enforce the governing equation. This mesh was also used to numerically compute the integral terms in \eqref{eq:FPVerhulstGWNTransf}. Out of the 151 points in $x \in [0,9]$, $N_{IC} = 101$ points were randomly chosen to impose the initial condition constraints.

Figures~\ref{fig:VerhulstGWNStationary} and ~\ref{fig:VerhulstGWNsomeTimes} exhibit the results of the trained neural network which attained a loss value of 0.000163. Figure~\ref{fig:VerhulstGWNStationary} compares the analytical solution with the neural network representation; in particular, the left panel displays $v_s(x)$ (solid) and $\widetilde{v}(x,4)$ (dashed) which differ by a constant while the right panel displays $f_s(x)$ (solid) and $\widetilde{f}(x,4)$ (dashed) which coincide. These plots show that the neural network solution is able to recover the analytical stationary density at $t=4$ and further confirms that \eqref{eq:FPVerhulstGWNTransf} has no unique solution as remarked above. Figure~\ref{fig:VerhulstGWNsomeTimes} plots the neural network solution $\widetilde{f}(x,t)$ at $t=0$ (left panel) and $t=0.5$ (right panel) to illustrate that it is consistent with histograms of $X(0)$ and $X(0.5)$ obtained from 100000 Monte Carlo samples.

\begin{figure}[h!]
		\centering
		\includegraphics[width = 0.46\textwidth]		
						{./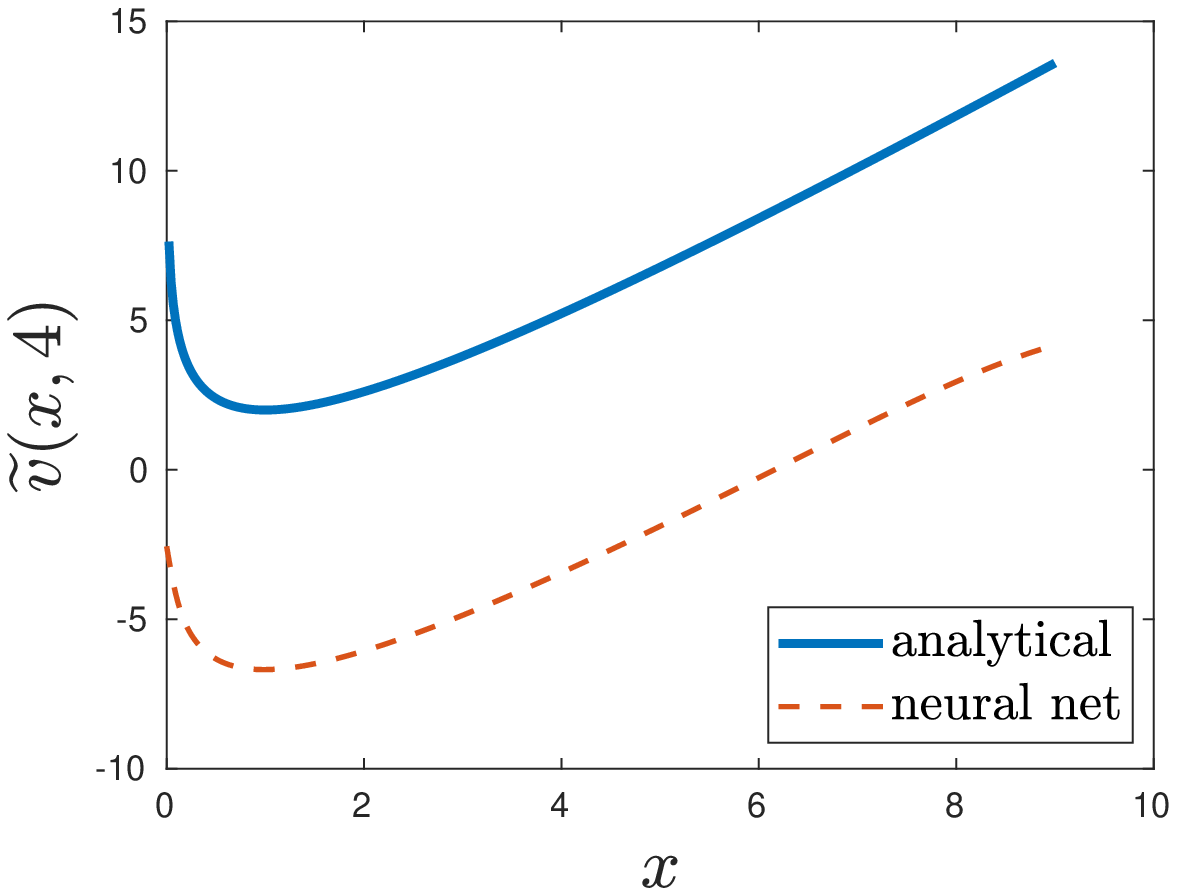} 
						\hspace{3em}
		\includegraphics[width = 0.43\textwidth]		
						{./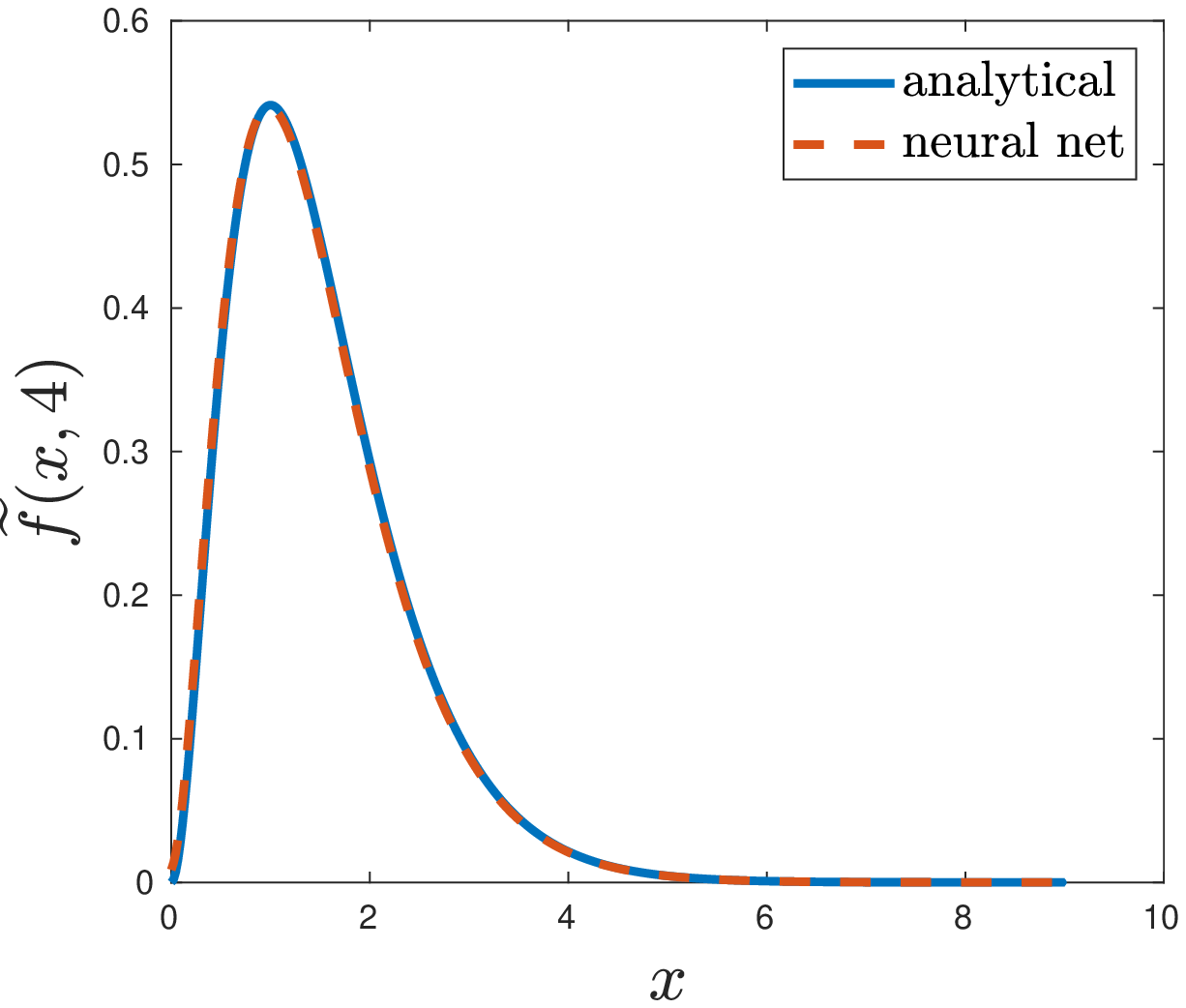}
		\caption{Comparison between the neural network approximation and the analytical stationary pdf for Section~\ref{subsec:VerhulstGWN}. Left: $v_s(x)$ (solid) vs $\widetilde{v}(x,4)$ (dashed). Right: $f_s(x)$ (solid) vs $\widetilde{f}(x,4)$ (dashed). }
		\label{fig:VerhulstGWNStationary}
\end{figure}

\begin{figure}[h!]
		\centering
		\includegraphics[width = 0.45\textwidth]		
						{./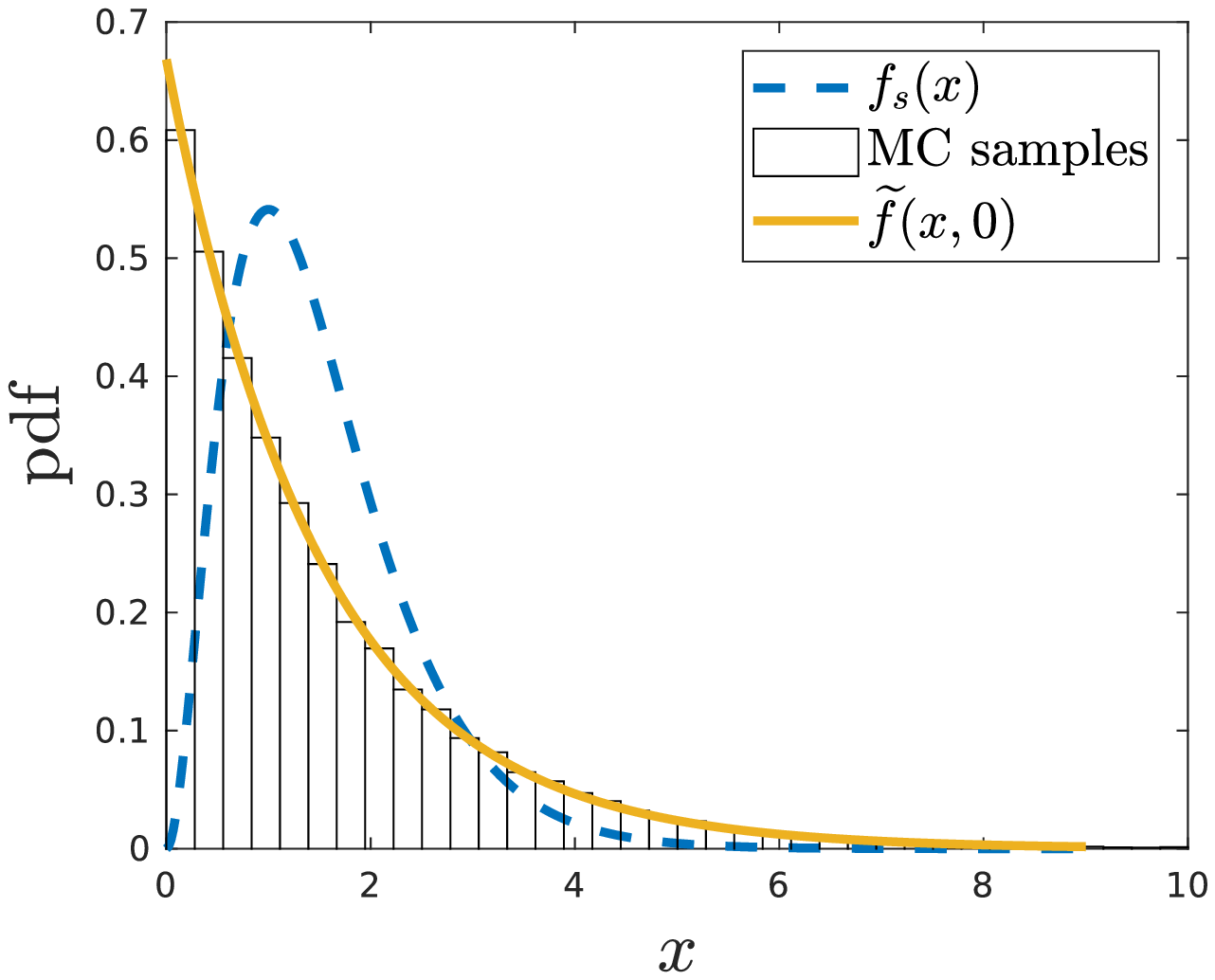} 
						\hspace{3em}
		\includegraphics[width = 0.46\textwidth]		
						{./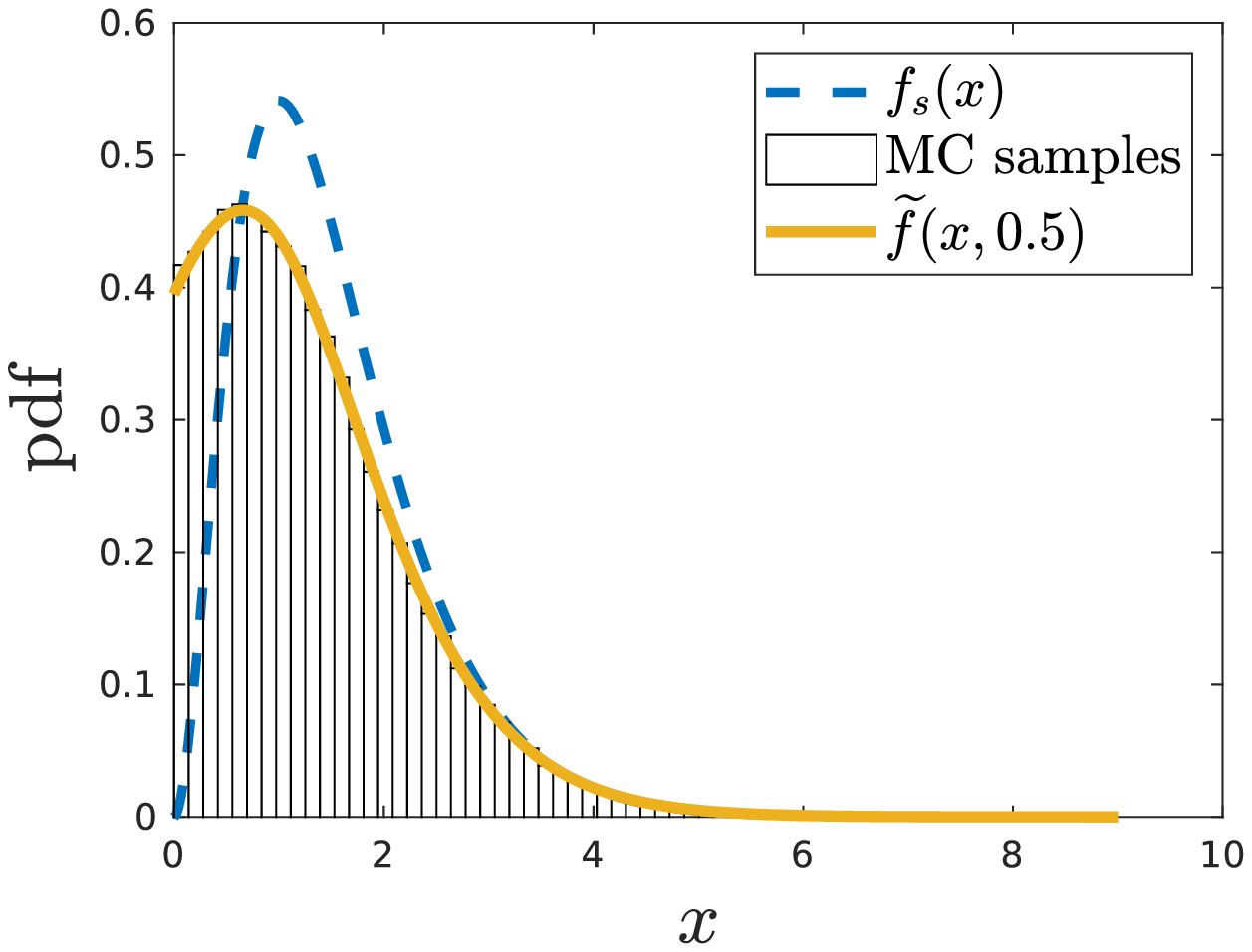}
		\caption{Comparison between the neural network approximation and Monte Carlo samples of $X(t)$  for Section~\ref{subsec:VerhulstGWN}. Left: histogram of $X(0)$ and $\widetilde{f}(x,0)$ (solid). Right: histogram of $X(0.5)$ and $\widetilde{f}(x,0.5)$ (solid). The analytical stationary density $f_s(x)$ is present in both plots (dashed). }
		\label{fig:VerhulstGWNsomeTimes}
\end{figure}

\subsection{Verhulst model with Poisson white noise} \label{subsec:VerhulstPWN}

%\begin{example} \label{ex:VerhulstPWN}
%Let $X(t)$ satisfy the SDE given by the Verhulst model
%\begin{align*}
%dX(t) = (\rho X(t-) - X(t-)^2)\,dt + X(t-) \,dC(t), \hspace{1em}t \ge 0, 
%\end{align*}
%where $C(t) = \sum_{k=1}^{N(t)}Y_k$ is a compound Poisson process that depends on a Poisson process $N(t)$ with intensity $\lambda$ and  iid random variables $Y_k$ with distribution $F$. It will be demonstrated that the neural network representation of the real and imaginary parts of the chf of $X(t)$ closely approximates the estimate provided by Monte Carlo simulation. 
%\end{example}

Let $X(t)$ satisfy the SDE given by the Verhulst model
\begin{align*}
dX(t) = (\rho X(t-) - X(t-)^2)\,dt + X(t-) \,dC(t), \hspace{1em}t \ge 0, 
\end{align*}
where $C(t) = \sum_{k=1}^{N(t)}Y_k$ is a compound Poisson process that depends on a Poisson process $N(t)$ with intensity $\lambda$ and  iid random variables $Y_k$ with distribution $F$. It will be demonstrated that the neural network representation of the real and imaginary parts of the chf of $X(t)$ closely approximates the estimate provided by Monte Carlo simulation.

According to \eqref{eq:CharFunPDE} and~\eqref{eq:VerhulstGeneral}, the chf of $X(t)$ satisfies
\begin{equation} \label{eq:VerhulstPWNChfEx}
\mathcal{Q}[\varphi(u,t)] = \frac{\partial \varphi(u,t)}{\partial t} - \rho u \frac{\partial \varphi(u,t)}{\partial u} - i u\frac{\partial^2 \varphi(u,t)}{\partial u^2} - \lambda \left[\int_{\mathbb{R}} \varphi(u(1+y),t) \,dF(y) - \varphi(u,t) \right] = 0
\end{equation}
which is complex-valued and is consequently a system of partial integro-differential equations. For demonstration, the following parameters were utilized: $\rho=2, \lambda = 12$, $Y_k$ is a discrete random variable taking on values $\{z_k\}_{k=1}^7=\left \{-\frac{1}{2}+\frac{k-1}{6}\right \}_{k=1}^7$ with equal probability so that $$\displaystyle \int_{\mathbb{R}} \varphi(u(1+y),t) \,dF(y) = \frac{1}{7}\sum_{k=1}^7 \varphi(u(1+z_k),t)$$ in \eqref{eq:VerhulstPWNChfEx}, and $X(0) \sim U(0.5,5.5)$ with $\varphi(u,0) = \left( \frac{\sin(5.5u) - \sin(0.5u)}{5u} \right) -i \left(\frac{\cos(5.5u) - \cos(0.5u)}{5u} \right)$. We seek a neural network approximation $\widetilde{\varphi}(u,t) \in \mathbb{C}$ such that  $\mathcal{Q}[\widetilde{\varphi}(u,t)] = 0$ on the truncated domain $(u,t) \in [0,10] \times [0,1]$. It is furthermore assumed that $\widetilde{\varphi}(u,t) = 0$ whenever $u > 10$ to avoid extrapolating the neural network solution when evaluating the expression $\varphi(u(1+y),t)$ in \eqref{eq:VerhulstPWNChfEx}. Note that this assumption is not restricted to the approach pursued here in solving differential equations; such assumption would have to be made for other numerical schemes.

The neural network utilized is composed of an input and output layer with 2 neurons each and 4 hidden layers with 100 neurons each. To construct the loss function~\eqref{eq:CHF-loss}, $N_{IC} = 400$ equally spaced points in $(0,10]$, $N_0 = 100$ equally spaced points in $[0,1]$, and a regular grid of $N_{Op} = 20000$ points consisting of 400  points in $[0,10]$ and 50  points in $[0,1]$ were generated which yielded a loss value of $4.9294514 \times 10^{-4}$. The neural network solution is then compared to the chf $\varphi^{MC}(u,t)$ resulting from 50000 Monte Carlo samples of $X(t)$ simulated through forward Euler.

Figures~\ref{fig:VerhulstPWNTime1} and~\ref{fig:VerhulstPWNTime2} display the comparison between $\widetilde{\varphi}(u,t)$ and $\varphi^{MC}(u,t)$, $u\ge 0$, for $t = 0.25$ and $t=1$, respectively. In each figure, the left subplot shows the real part of the characteristic function while the right subplot shows the imaginary part. As the plots indicate, both approximations to the actual chf $\varphi(u,t)$ are similar. The discrepancy between the two approximations for $u$ close to 10 is due to the assumption imposed that $\widetilde{\varphi}(u,t) = 0$ for values of $u$ exceeding the truncated domain.

\begin{figure}[h!]
		\centering
		\includegraphics[width = 0.45\textwidth]		
						{./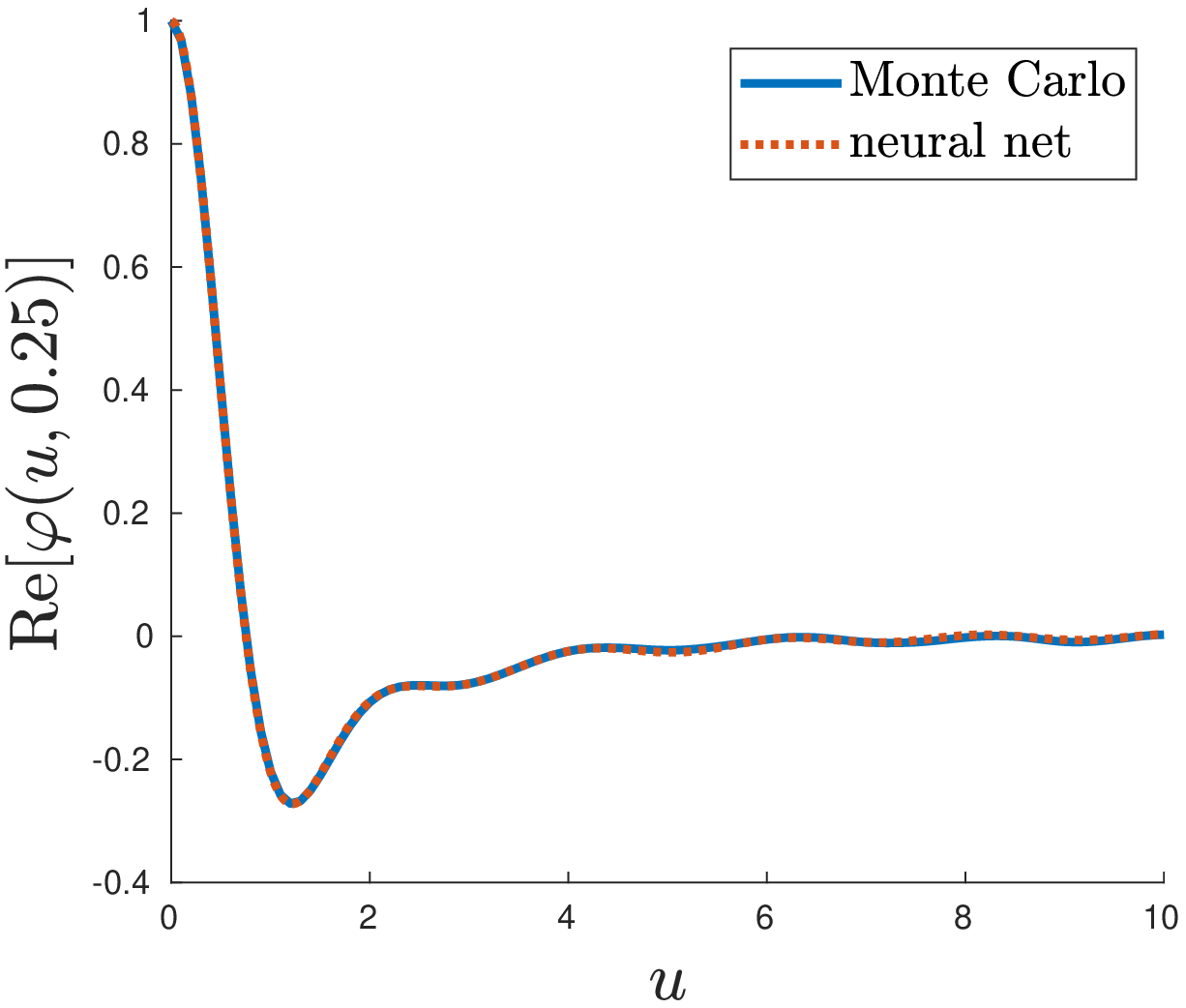} 
						\hspace{3em}
		\includegraphics[width = 0.45\textwidth]		
						{./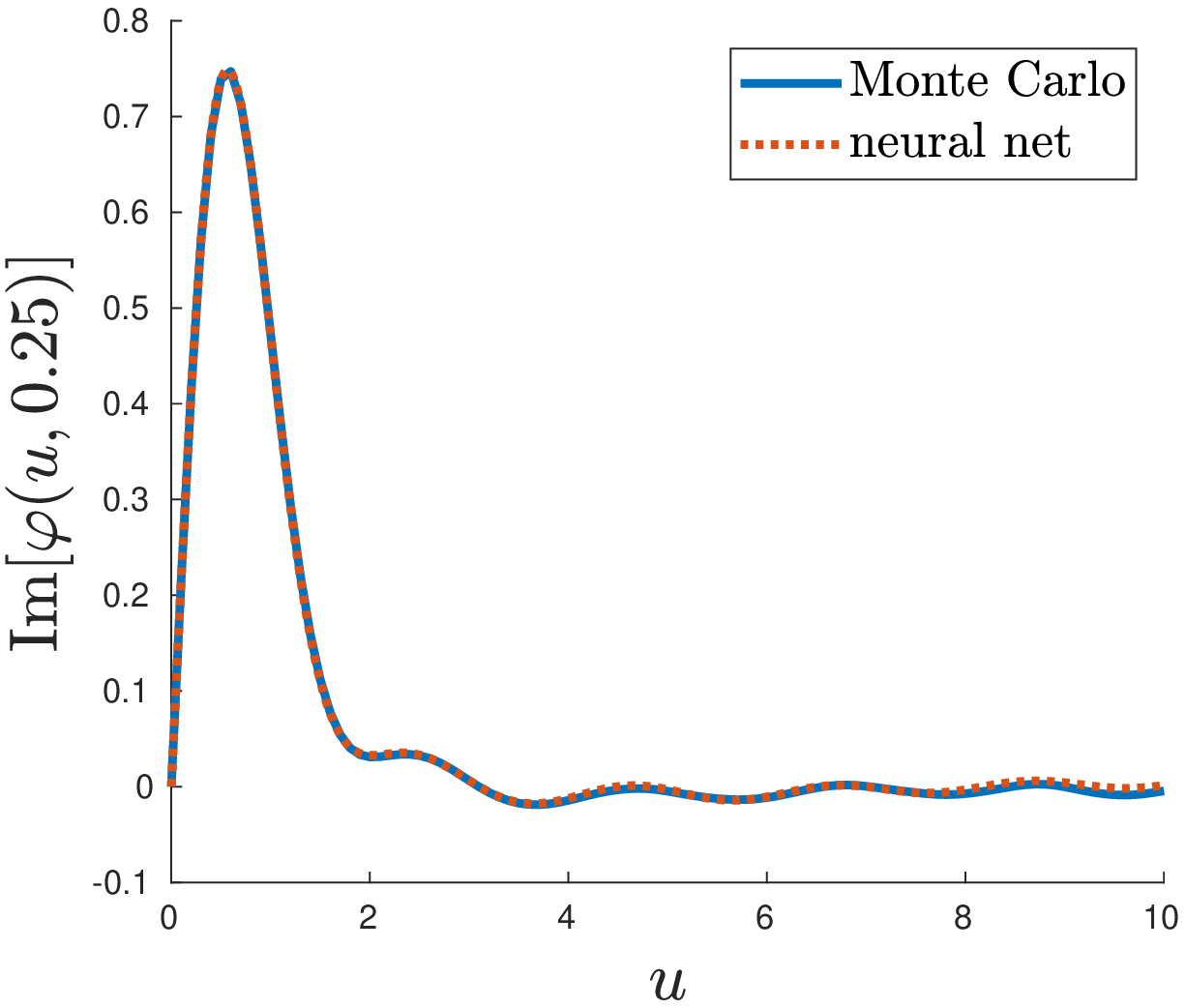}
		\caption{Comparison between the neural network approximation and the chf obtained via Monte Carlo for Section~\ref{subsec:VerhulstPWN} at $t=0.25$.  Left: $\text{Re}[\varphi^{MC}(u,0.25)]$ (solid) vs $\text{Re}[\widetilde{\varphi}(u,0.25)]$ (dashed). Right: $\text{Im}[\varphi^{MC}(u,0.25)]$ (solid) vs $\text{Im}[\widetilde{\varphi}(u,0.25)]$ (dashed). }
		\label{fig:VerhulstPWNTime1}
\end{figure}

\begin{figure}[h!]
		\centering
		\includegraphics[width = 0.45\textwidth]		
						{./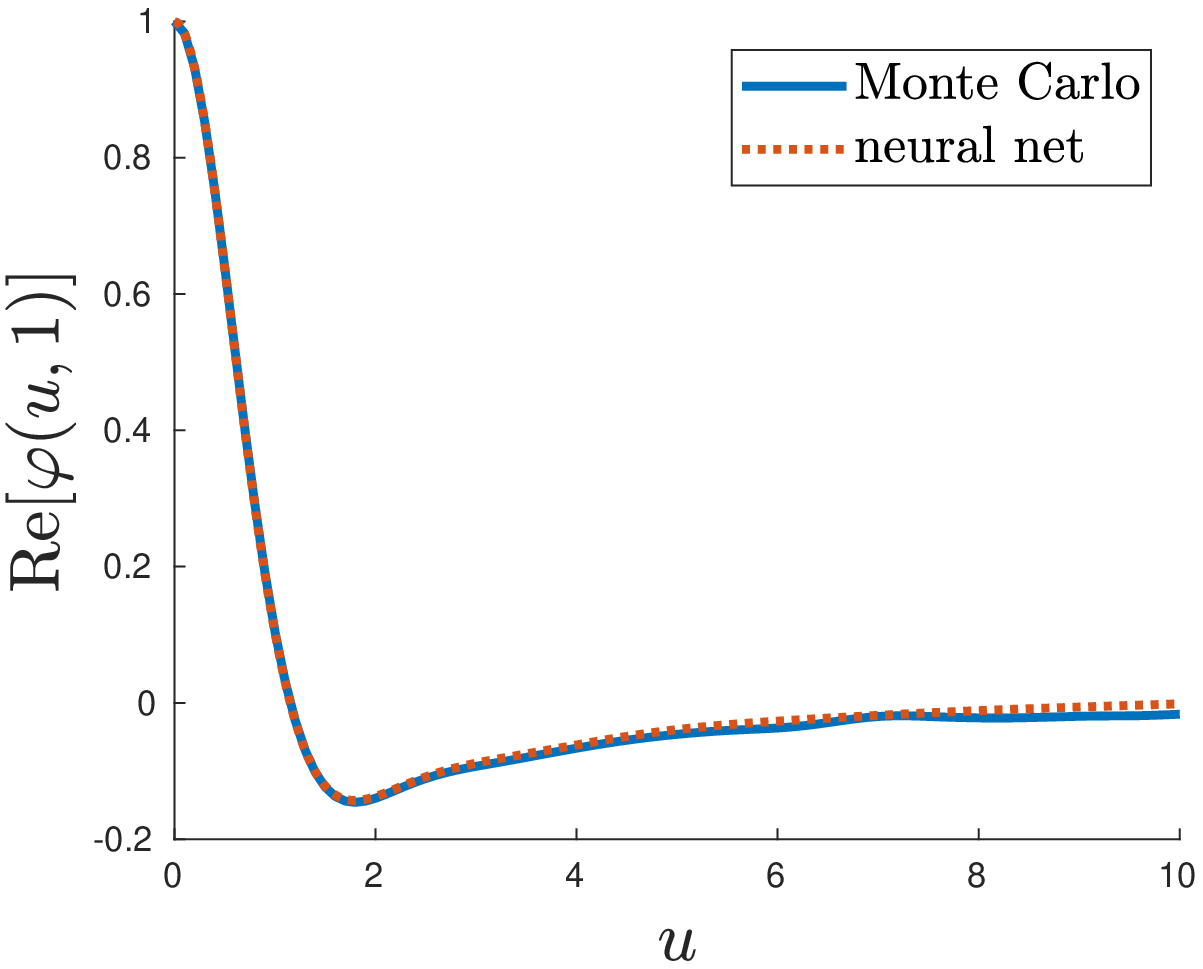} 
						\hspace{3em}
		\includegraphics[width = 0.45\textwidth]		
						{./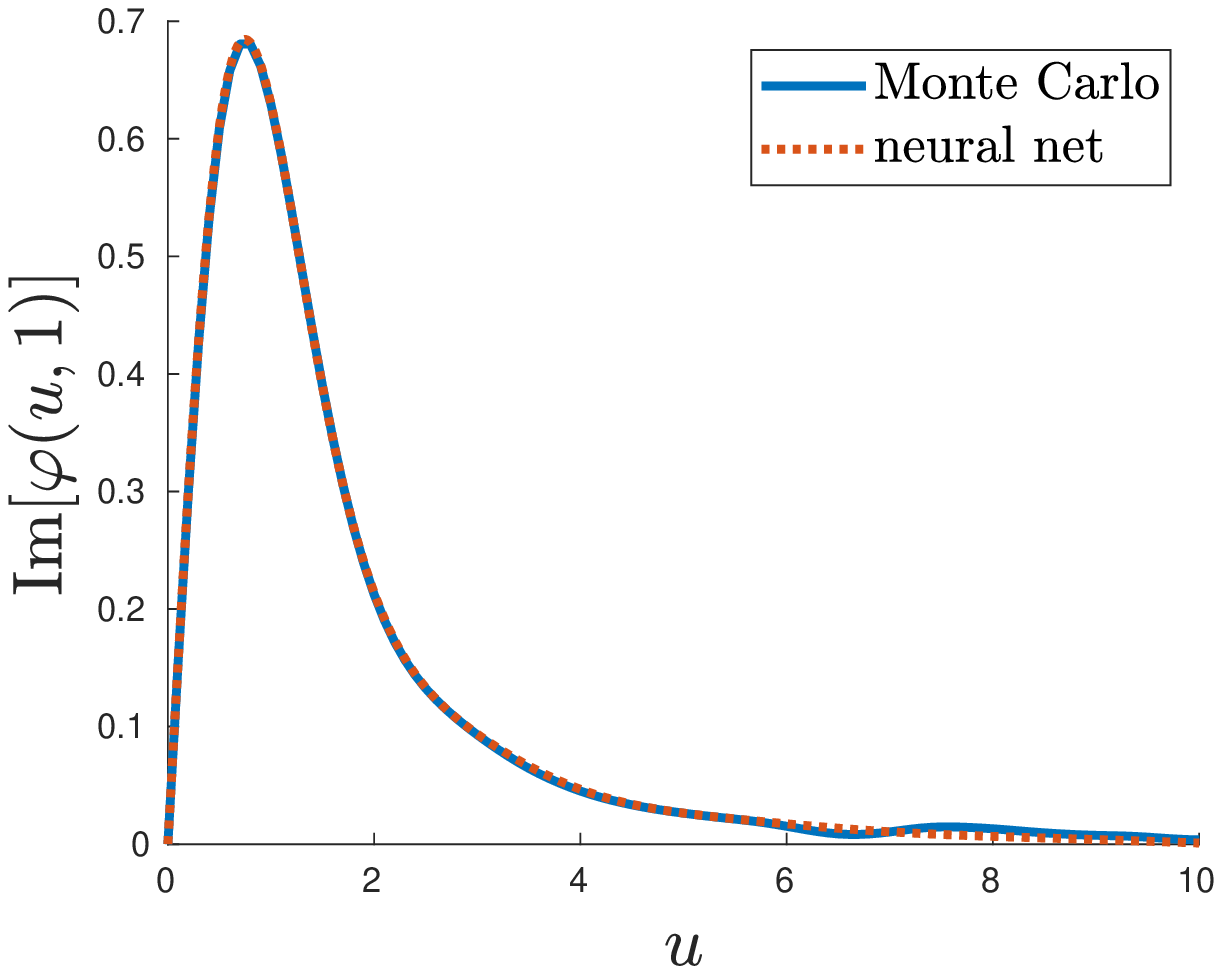}
		\caption{Comparison between the neural network approximation and the chf obtained via Monte Carlo for Section~\ref{subsec:VerhulstPWN} at $t=1$.  Left: $\text{Re}[\varphi^{MC}(u,1)]$ (solid) vs $\text{Re}[\widetilde{\varphi}(u,1)]$ (dashed). Right: $\text{Im}[\varphi^{MC}(u,1)]$ (solid) vs $\text{Im}[\widetilde{\varphi}(u,1)]$ (dashed). }
		\label{fig:VerhulstPWNTime2}
\end{figure}

\subsection{Duffing oscillator with Gaussian white noise} \label{subsec:DuffingFPvsCHF}

Let $X(t)$, $t \in [0,1]$, be the displacement of an oscillator with cubic stiffness under the Duffing model subject to Gaussian white noise external forcing. The displacement satisfies the SDE 
\begin{align} \label{eq:DuffOscillGWNSDE}
\ddot{X}(t) + 2\zeta \nu \dot{X}(t) + \nu^2 (X(t) + \alpha X(t)^3) = W(t)
\end{align}
where $\zeta \in (0,1)$ is the damping ratio, $\nu$ is the initial frequency, $\alpha$ is a constant, and $W(t)$ is Gaussian white noise with zero mean and one-sided spectral density of intensity $g_0 > 0$. In system form, this can be expressed as 
\begin{align}  \label{eq:DuffingGWNsystem}
d \begin{bmatrix}
X_1(t) \\ X_2(t)
\end{bmatrix}
 = 
\begin{bmatrix}
X_2(t) \\
-\nu^2 (X_1(t) + \alpha X_1(t)^3) - 2\zeta \nu X_2(t)
\end{bmatrix} \, dt +
\begin{bmatrix}
0\\
\sqrt{\pi g_0}
\end{bmatrix} \, dB(t)
\end{align}
where $X_1(t) = X(t), X_2(t) = \dot{X}(t)$ and $B(t)$ is Brownian motion \cite[p. 480]{book:Grigoriu2002}.  The objective is to numerically verify that the pdfs of $X(t)$ resulting from a neural network approximation of the Fokker-Planck equation and that of the PDE for the characteristic function coincide.

%\begin{example} \label{ex:DuffingGWNFPvsCHF}
%Let $X(t)$, $t \in [0,1]$, be the displacement of an oscillator with cubic stiffness under the Duffing model subject to Gaussian white noise external forcing. The displacement satisfies the SDE 
%\begin{align} \label{eq:DuffOscillGWNSDE}
%\ddot{X}(t) + 2\zeta \nu \dot{X}(t) + \nu^2 (X(t) + \alpha X(t)^3) = W(t)
%\end{align}
%where $\zeta \in (0,1)$ is the damping ratio, $\nu$ is the initial frequency, $\alpha$ is a constant, and $W(t)$ is Gaussian white noise with zero mean and one-sided spectral density of intensity $g_0 > 0$. In system form, this can be expressed as 
%\begin{align}  \label{eq:DuffingGWNsystem}
%d \begin{bmatrix}
%X_1(t) \\ X_2(t)
%\end{bmatrix}
% = 
%\begin{bmatrix}
%X_2(t) \\
%-\nu^2 (X_1(t) + \alpha X_1(t)^3) - 2\zeta \nu X_2(t)
%\end{bmatrix} \, dt +
%\begin{bmatrix}
%0\\
%\sqrt{\pi g_0}
%\end{bmatrix} \, dB(t)
%\end{align}
%where $X_1(t) = X(t), X_2(t) = \dot{X}(t)$ and $B(t)$ is Brownian motion \cite[p. 480]{book:Grigoriu2002}.  The objective is to numerically verify that the pdfs of $X(t)$ resulting from a neural network approximation of the Fokker-Planck equation and that of the PDE for the characteristic function coincide.
%\end{example}

In the simulations that follow, the parameters values we selected were $\zeta = 0.25, \nu  = 1, \alpha = 1, g_0 = 1$ and $X_1(0) \sim N(0,1), X_2(0) \sim N(0,1)$ with $\rho=\text{Corr}(X_1(0),X_2(0)) = 0.8$. The sections below detail the neural network approximation we pursued under each approach. To examine the accuracy of each approximation, 250000 Monte Carlo samples of $X_1(t),X_2(t)$ are generated by simulating~\eqref{eq:DuffingGWNsystem} via Runge-Kutta scheme with time step size of $0.005$.

\subsubsection{Fokker-Planck equation} \label{subsubsec:DuffGWNFP}

According to~\eqref{eq:FPeqn}, the Fokker-Planck equation for~\eqref{eq:DuffingGWNsystem} is 
\begin{align*} 
\frac{\partial f(\boldsymbol{x},t)}{\partial t} = -x_2 \frac{\partial f(\boldsymbol{x},t)}{\partial x_1} + 2\zeta \nu f(\boldsymbol{x},t) + (\nu^2 (x_1 + \alpha x_1^3) + 2\zeta \nu x_2) \frac{\partial f(\boldsymbol{x},t)}{\partial x_2} + \frac{\pi g_0}{2} \frac{\partial^2 f(\boldsymbol{x},t)}{\partial x_2^2}
\end{align*}
which is transformed to
\begin{align} \label{eq:DuffGWNFPTransfPDE}
\mathcal{M}[v(\boldsymbol{x},t)] = v_t(\boldsymbol{x},t) + x_2 v_{x_1}(\boldsymbol{x},t) + 2\zeta \nu & - (\nu^2 (x_1 + \alpha x_1^3) + 2\zeta \nu x_2) v_{x_2}(\boldsymbol{x},t) \notag \\ 
&+ \frac{\pi g_0}{2} ((v_{x_2}(\boldsymbol{x},t))^2 - v_{x_2 x_2}(\boldsymbol{x},t)) + \frac{c'(t)}{c(t)} = 0
\end{align}
upon applying~\eqref{eq:FPtransformation} with $c(t) =  \int_{\mathbb{R}} \int_{\mathbb{R}} e^{-v(x_1,x_2,t)} \,dx_1\,dx_2$. From the initial conditions specified above, we have $f(\boldsymbol{x},0) = \frac{1}{2\pi \sqrt{1-\rho^2}}\exp \left( -\frac{1}{2(1-\rho^2)} \left[ x_1^2 + x_2^2 - 2\rho x_1 x_2 \right] \right)
$ or $v(\boldsymbol{x},0) =  \frac{1}{2(1-\rho^2)} \left[ x_1^2 + x_2^2 - 2\rho x_1 x_2 \right]$ for the transformed variable. 

We seek a neural network approximation $\widetilde{v}(\boldsymbol{x},t)$ such that $\mathcal{M}[\widetilde{v}(\boldsymbol{x},t)] = 0$ over the truncated domain $(x_1,x_2) \in [-4,4] \times [-8,8]$. The architecture of the network consists of an input layer with 3 neurons, an output layer with 1 neuron, and 6 hidden layers with 50 neurons each. The network was trained using a regular grid of $N_{IC} = 1025$ points (25 points in $x_1 \in [-4,4]$ and 41 points in $x_2 \in [-8,8]$) and $N_{Op} = 50000$ points formed by taking the tensor product of 50 latin hypercube samples in $t \in [0,1]$ and 1000 latin hypercube samples in $(x_1,x_2) \in [-4,4] \times [-8,8]$.  These $N_{Op}$ collocation points were also used to estimate the terms $c'(t)$ and $c(t)$ in the operator $\mathcal{M}[v(\boldsymbol{x},t)]$~\eqref{eq:DuffGWNFPTransfPDE} via Monte Carlo integration.

Figures~\ref{fig:DuffGWNFPtransfSoln},~\ref{fig:DuffGWNFPtime1}, and~\ref{fig:DuffGWNFPtime2} summarize the performance of the neural network approximation $\widetilde{f}(\boldsymbol{x},t)$ to the solution of the Fokker-Planck equation which is then compared to the pdf $f^{MC}(\boldsymbol{x},t)$ representing the kernel density estimate from the Monte Carlo samples of $X_1(t),X_2(t)$. The panels of Figure~\ref{fig:DuffGWNFPtransfSoln} display $\widetilde{v}(\boldsymbol{x},t)$ at $t = 0.25,0.75$. Figure~\ref{fig:DuffGWNFPtime1} compares $f^{MC}(\boldsymbol{x},t)$ (left) with $\widetilde{f}(\boldsymbol{x},t)$ (right) for $t=0.25$; the same comparison is made in Figure~\ref{fig:DuffGWNFPtime2} but for $t=0.75$.  As these plots reveal, the neural network and Monte Carlo approximations are similar in behavior. Furthermore, denote by $\widetilde{f}_1(x_1,t)$ the estimate of the pdf of $X(t)$ which results by marginalizing $\widetilde{f}(\boldsymbol{x},t)$ through $\widetilde{f}_1(x_1,t) =  \int_{[-8,8]} \widetilde{f}(x_1,x_2,t) \,dx_2$.  As Figure~\ref{fig:DuffGWNFPX1Hist} demonstrates, $\widetilde{f}_1(x_1,t)$ is able to match the histograms of Monte Carlo samples of $X_1(t)$ for $t=0.25$ (left) and $t = 0.75$ (right).

\begin{figure}[h!]
		\centering
		\includegraphics[width = 0.45\textwidth]		
						{./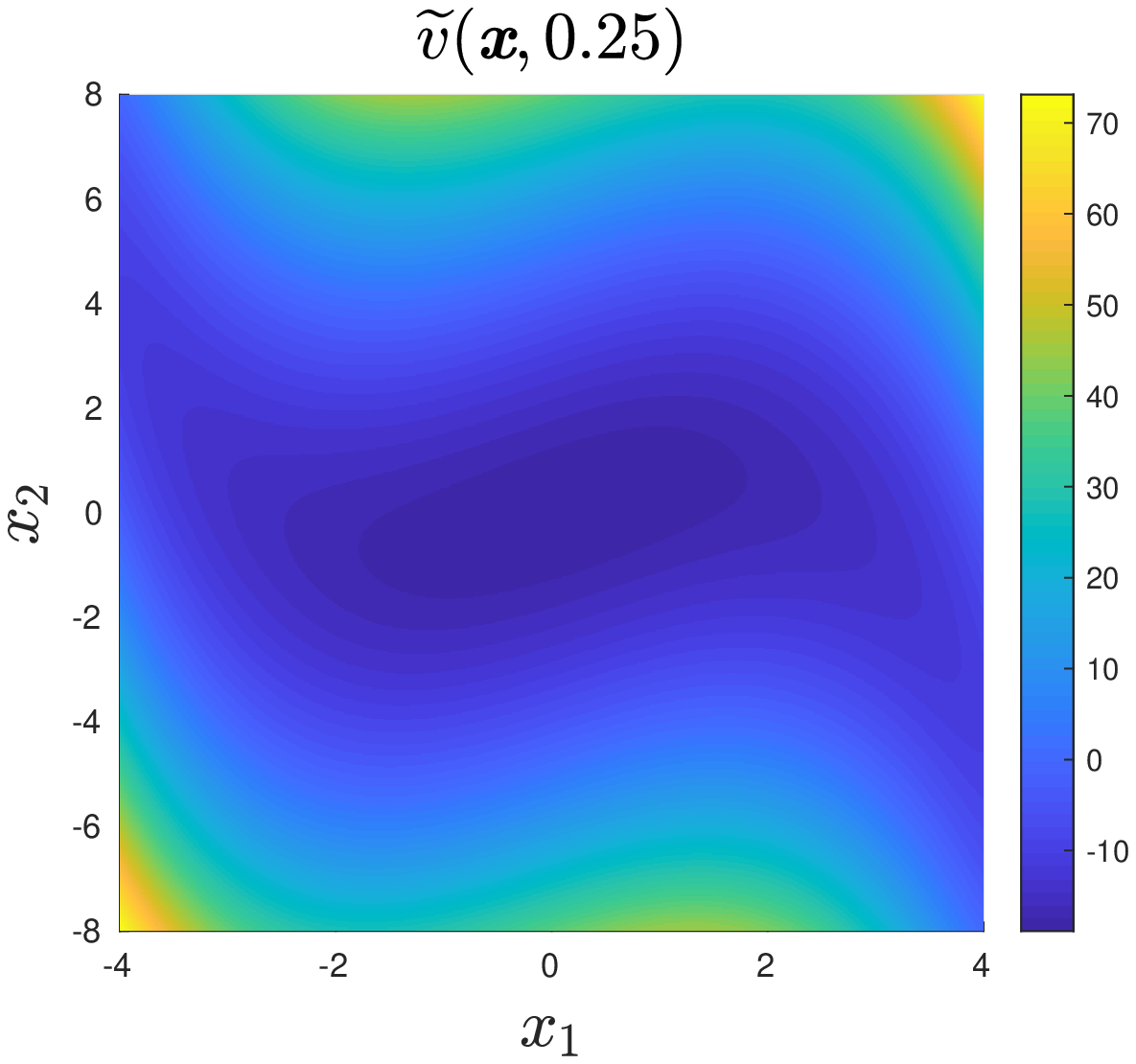} 
						\hspace{3em}
		\includegraphics[width = 0.45\textwidth]		
						{./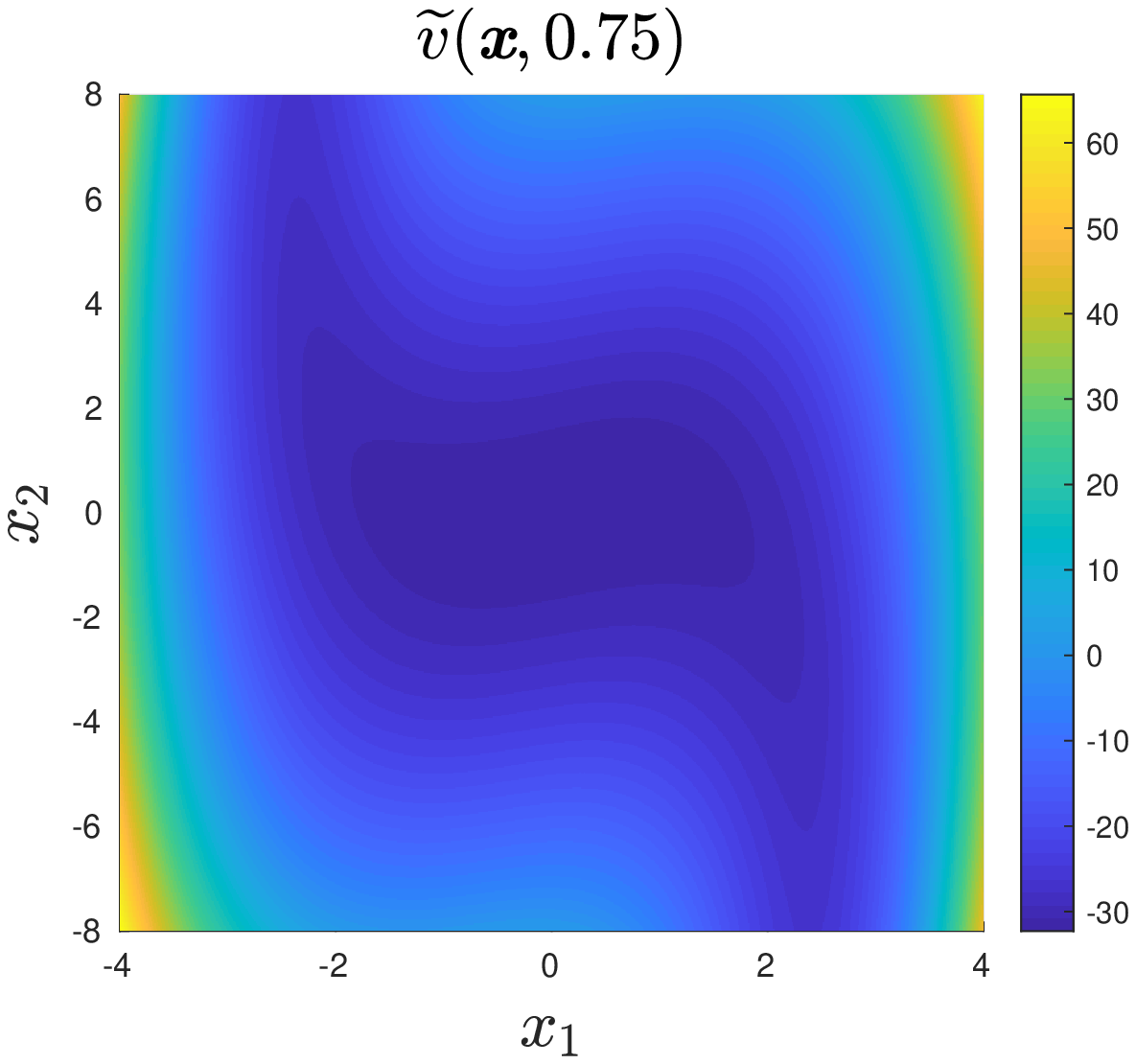}
		\caption{Neural network approximation $\widetilde{v}(\boldsymbol{x},t)$ for Section~\ref{subsubsec:DuffGWNFP} at $t=0.25$ (left) and $t=0.75$ (right).}
		\label{fig:DuffGWNFPtransfSoln}
\end{figure}

\begin{figure}[h!]
		\centering
		\includegraphics[width = 0.46\textwidth]		
						{./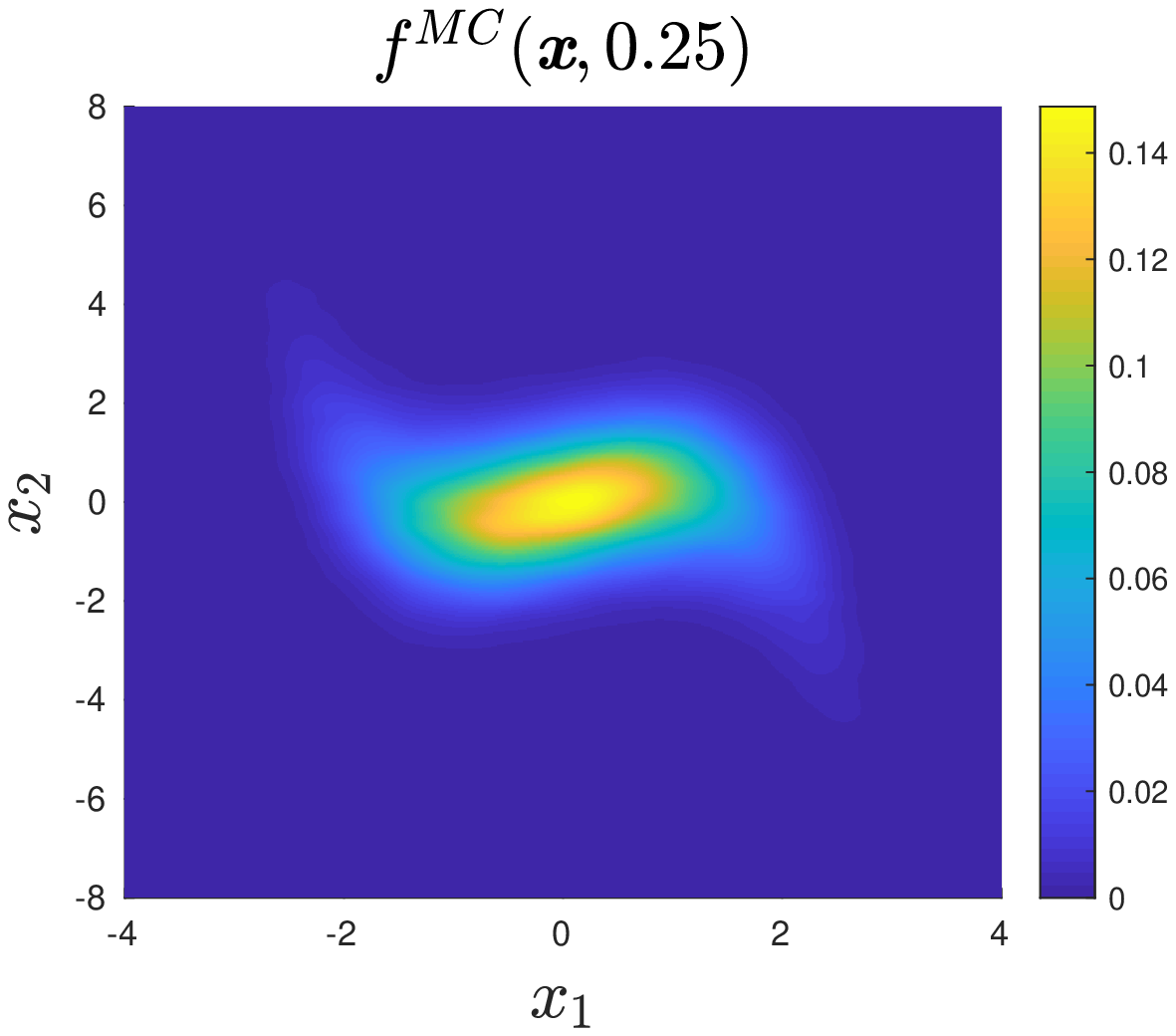} 
						\hspace{3em}
		\includegraphics[width = 0.44\textwidth]		
						{./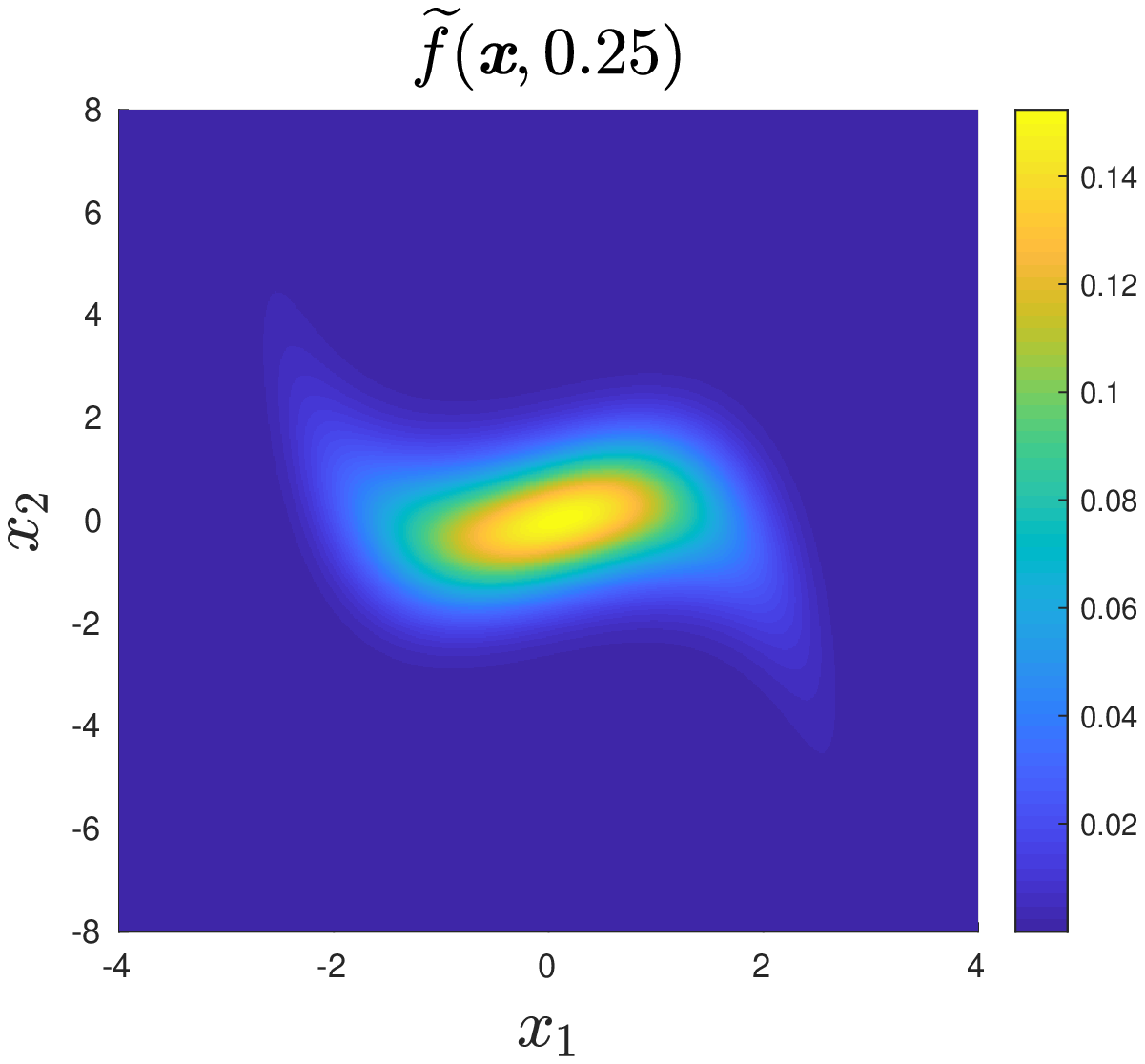}
		\caption{Comparison between the pdf $f^{MC}(\boldsymbol{x},t)$ obtained via kernel density estimation on Monte Carlo samples (left) and the neural network approximation $\widetilde{f}(\boldsymbol{x},t)$ (right) for Section~\ref{subsubsec:DuffGWNFP} at $t=0.25$.  }
		\label{fig:DuffGWNFPtime1}
\end{figure}

\begin{figure}[h!]
		\centering
		\includegraphics[width = 0.45\textwidth]		
						{./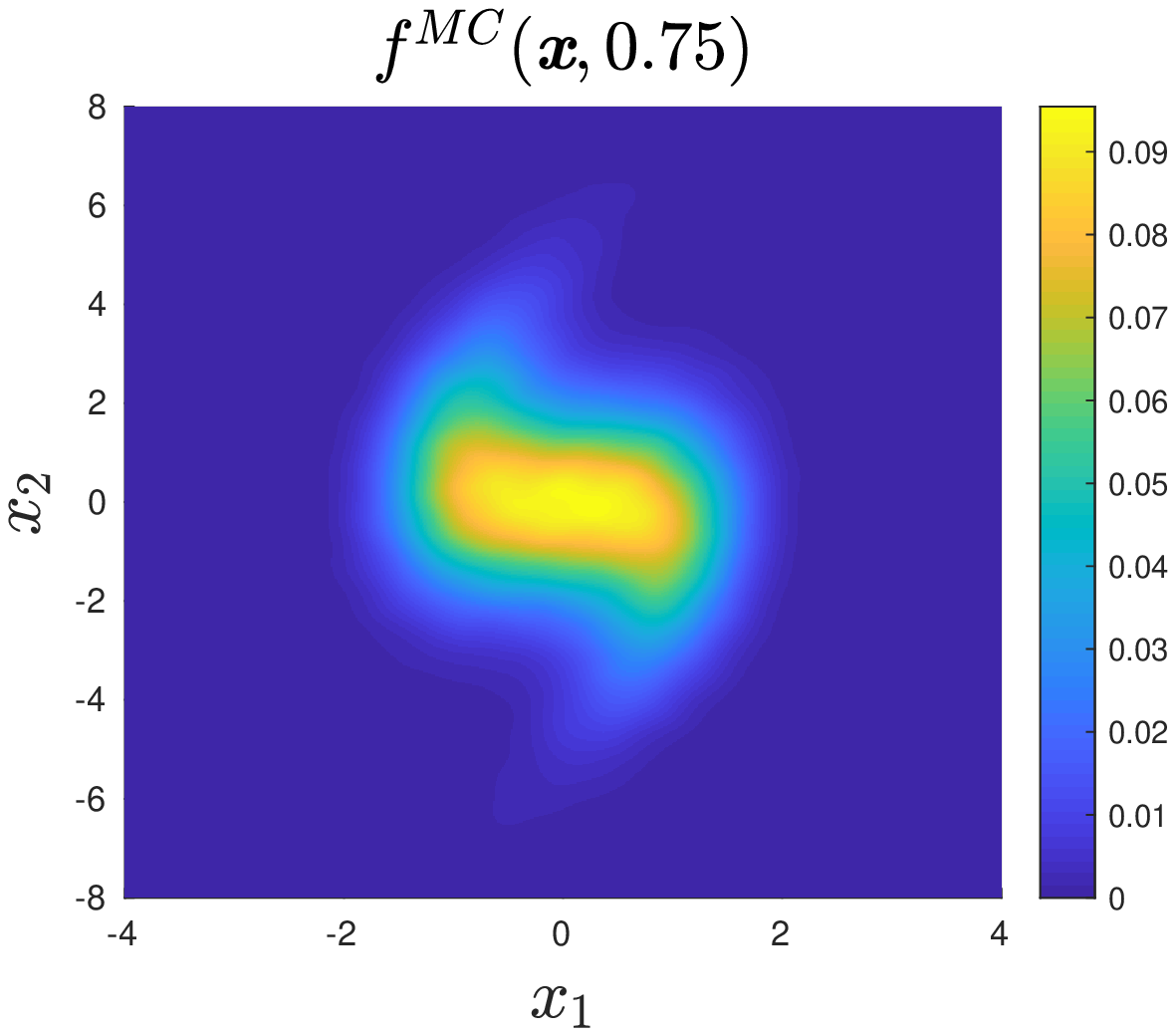} 
						\hspace{3em}
		\includegraphics[width = 0.45\textwidth]		
						{./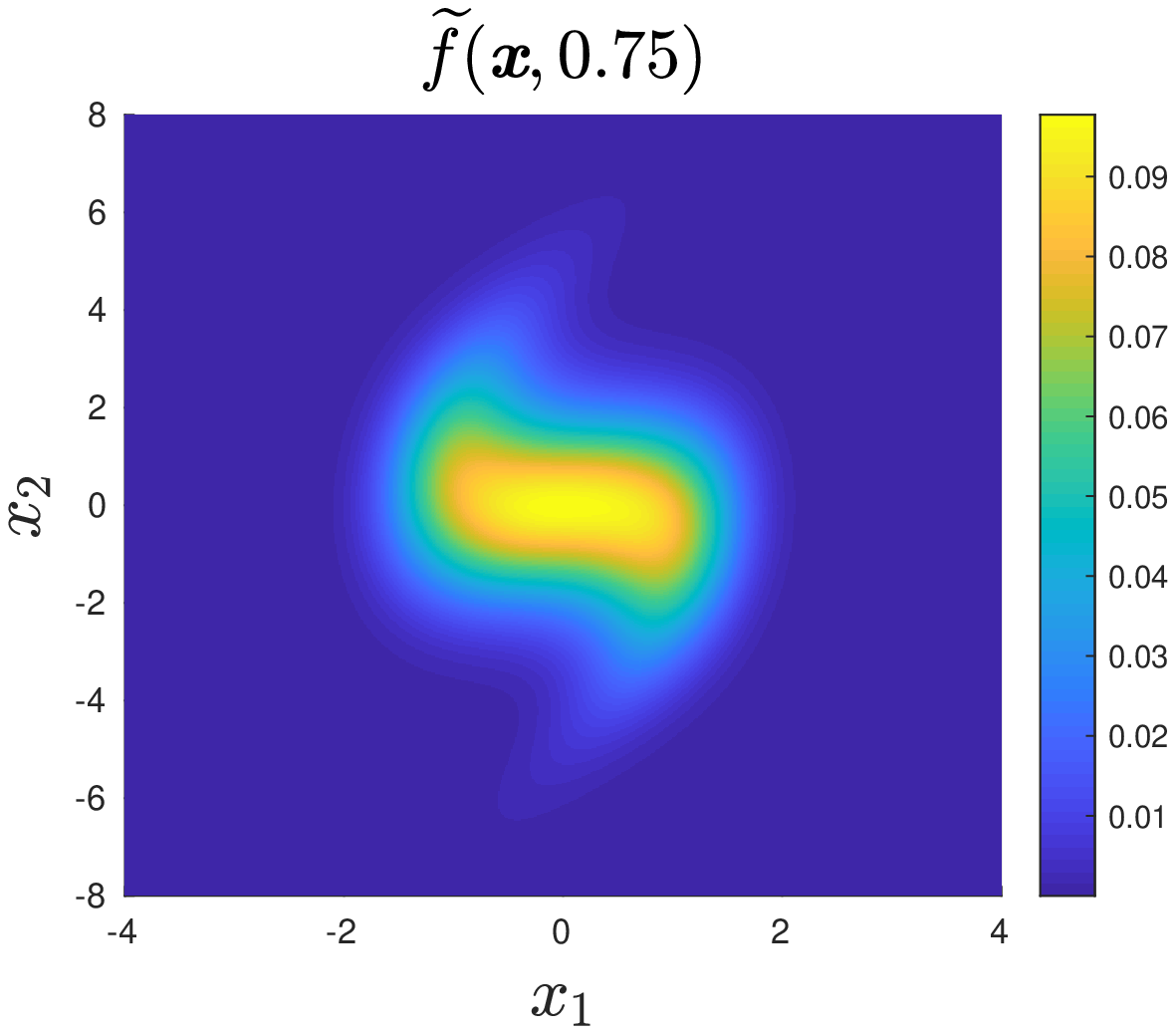}
		\caption{Comparison between the pdf $f^{MC}(\boldsymbol{x},t)$ obtained via kernel density estimation on Monte Carlo samples (left) and the neural network approximation $\widetilde{f}(\boldsymbol{x},t)$ (right) for Section~\ref{subsubsec:DuffGWNFP} at $t=0.75$. }
		\label{fig:DuffGWNFPtime2}
\end{figure}

While the above plots suggest that $\widetilde{f}(\boldsymbol{x},t)$ adequately solves the Fokker-Planck equation, the loss value corresponding to $\widetilde{v}(\boldsymbol{x},t)$ is 0.0131580755 which is relatively higher than those from the previous examples. To understand why $\widetilde{f}(\boldsymbol{x},t)$ offers a sufficient approximation despite having a large loss, we construct a binned scatterplot in Figure~\ref{fig:DuffGWNFPLargeLoss} between $\|\boldsymbol{x}\|$ and $|\mathcal{M}[\widetilde{v}(\boldsymbol{x},t)]|$ using the $N_{Op} = 50000$ collocation points we have generated. Figure~\ref{fig:DuffGWNFPLargeLoss} underscores that the loss value is large because the error in the governing equation is large for collocation points far from the origin, i.e. close to the boundary of $[-4,4]\times [-8,8]$. However, at these collocation points, the magnitude of the actual pdf $f(\boldsymbol{x},t)$ is considerably small due to the boundary conditions of the Fokker-Planck equation. Thus, when the transformation~\eqref{eq:FPtransformation} is applied to normalize the solution $\widetilde{v}(\boldsymbol{x},t)$, the large error at these collocation points is nullified for the approximation $\widetilde{f}(\boldsymbol{x},t)$. The same behavior persists for other neural network architectures we have investigated. This presents an example as to why it may be disadvantageous to solve the Fokker-Planck equation using neural networks -- it may not be always possible to diagnose why the loss value for $\widetilde{v}(\boldsymbol{x},t)$ is large. Finally, we noticed in our numerical experiments that for some architectures, $\widetilde{v}(\boldsymbol{x},t)$ has a tendency of being very negative which renders $\frac{c'(t)}{c(t)}$ and hence $\mathcal{M}[\widetilde{v}(\boldsymbol{x},t)]$~\eqref{eq:DuffGWNFPTransfPDE} nan. The optimization algorithm for minimizing the loss is unable to  proceed in such cases. This scenario is due to  lack of a unique solution to~\eqref{eq:DuffGWNFPTransfPDE} as discussed in Section~\ref{subsec:CompCHFvsPDF}.

% Time 2: KDE vs NN 0.0092
% Time 4: KDE vs NN 0.0057

\begin{figure}[h!]
		\centering
		\includegraphics[width = 0.43\textwidth]		
						{./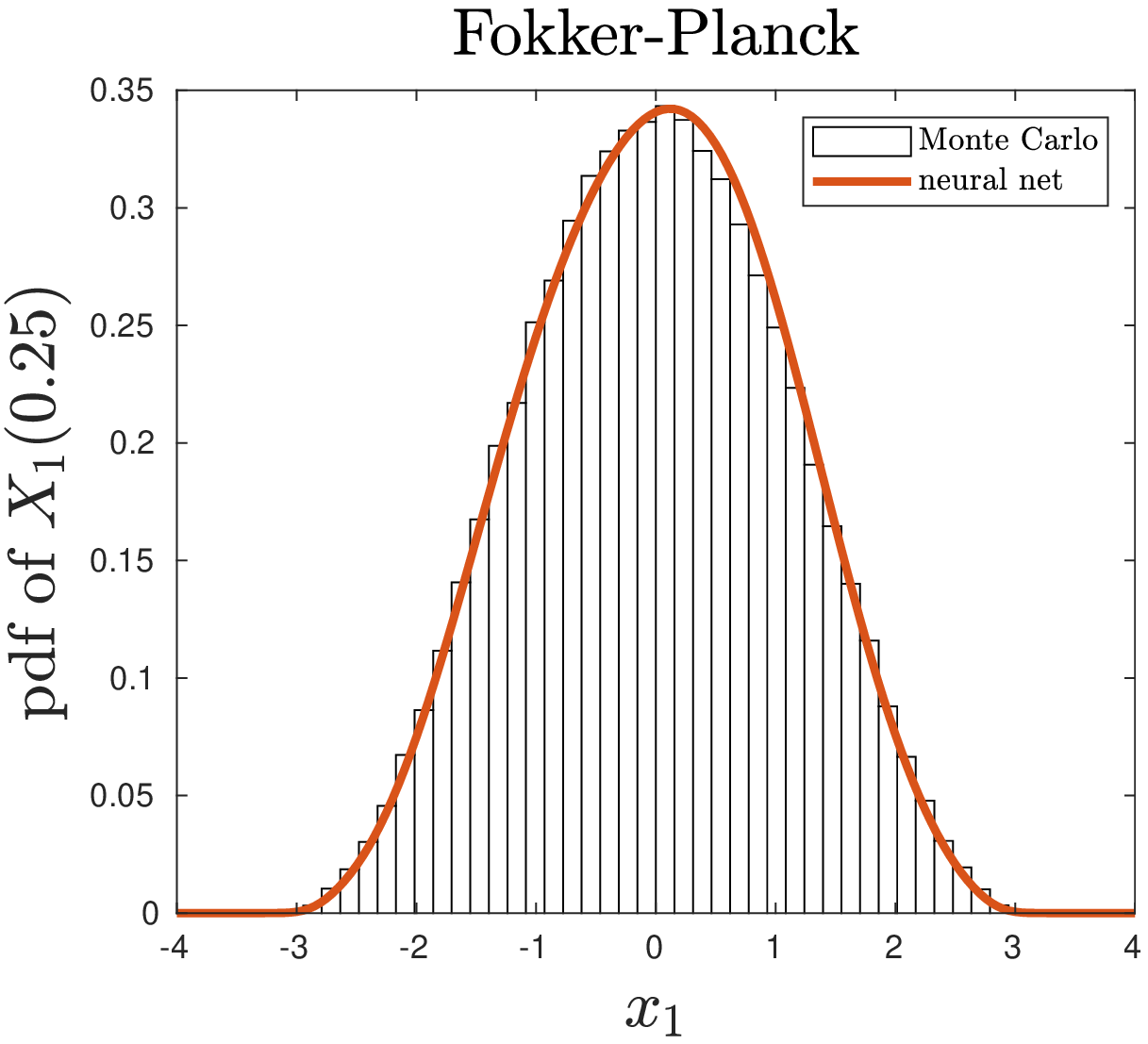} 
						\hspace{3em}
		\includegraphics[width = 0.46\textwidth]		
						{./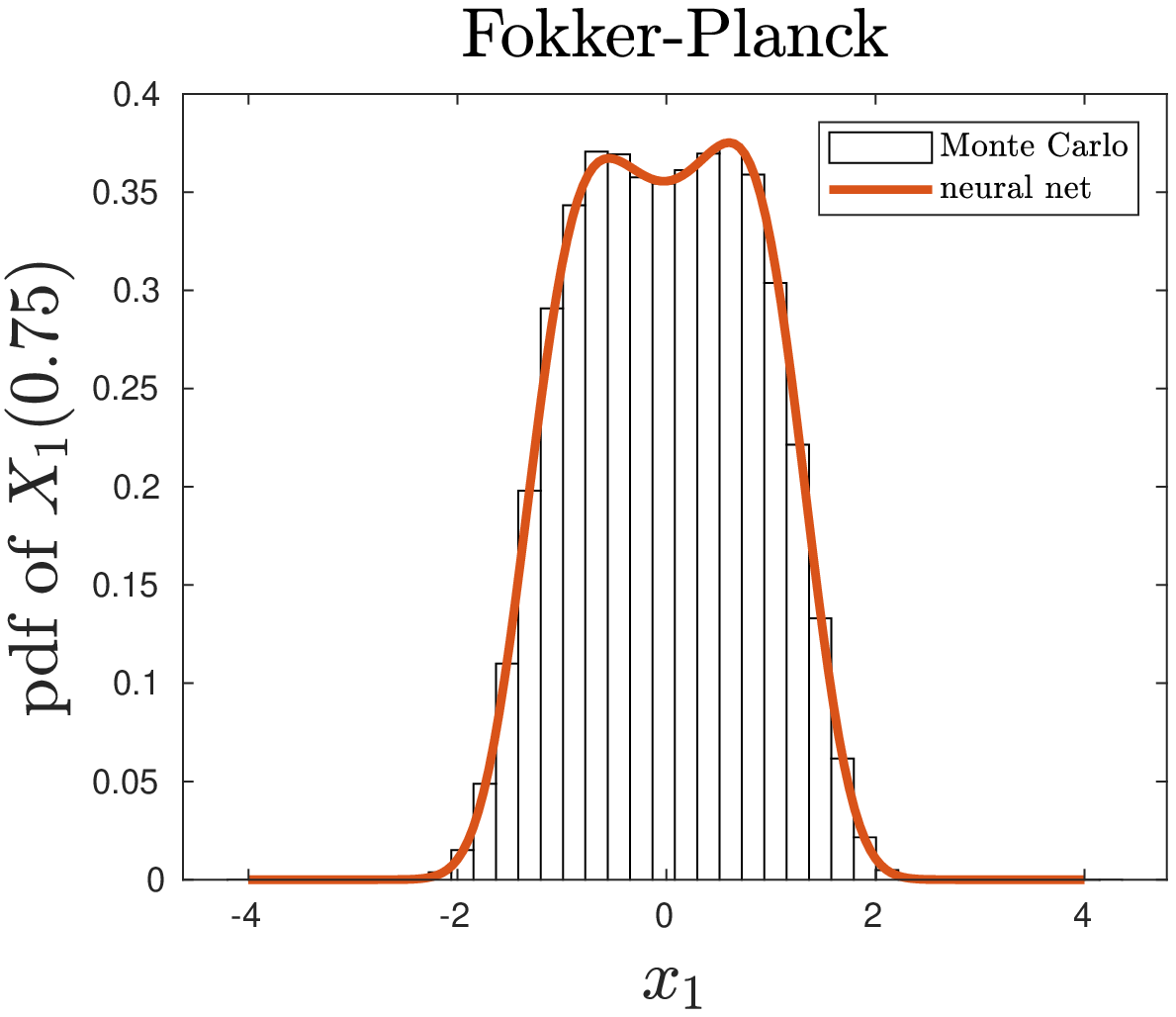}
		\caption{Comparison between the neural network approximation $\widetilde{f}_1(x_1,t)$ to the pdf of $X_1(t)$ and the histogram based on Monte Carlo samples of $X_1(t)$ for Section~\ref{subsubsec:DuffGWNFP} at $t=0.25$ (left) and $t = 0.75$ (right). }
		\label{fig:DuffGWNFPX1Hist}
\end{figure}

\begin{figure}[h!]
		\centering
		\includegraphics[width = 0.45\textwidth]		
						{./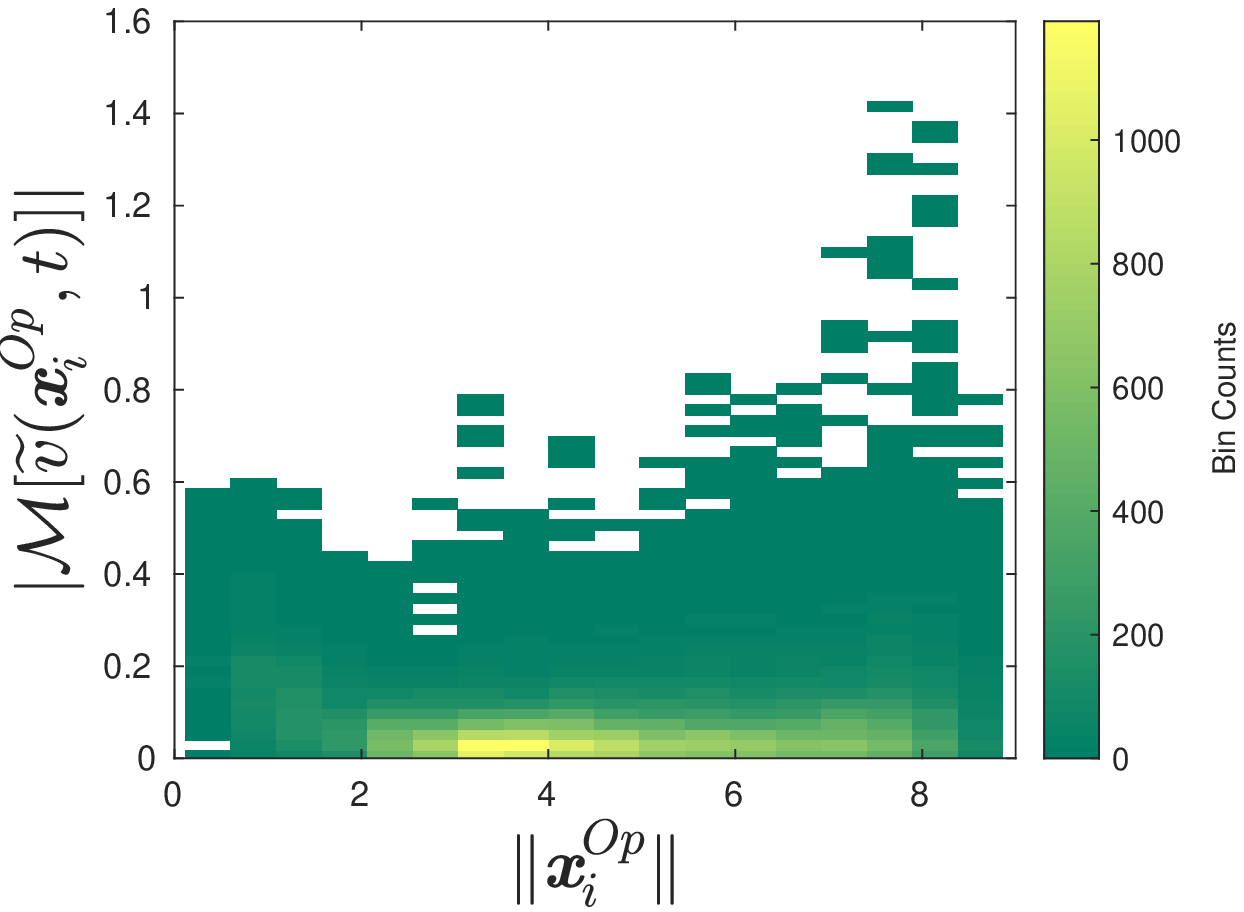} 
		\caption{Binned scatterplot of $\|\boldsymbol{x}\|$ vs $|\mathcal{M}[\widetilde{v}(\boldsymbol{x},t)]|$ using $N_{Op}$ collocation points to enforce~\eqref{eq:DuffGWNFPTransfPDE}   for Section~\ref{subsubsec:DuffGWNFP}.}
		\label{fig:DuffGWNFPLargeLoss}
\end{figure}

\subsubsection{Characteristic function PDE} \label{subsubsec:DuffGWNCharFun}

In contrast to the previous section, we estimate the pdf of $X_1(t) = X(t)$ by first solving the PDE of the characteristic function and subsequently applying the Fourier transform. From~\eqref{eq:CharFunPDE} and~\eqref{eq:DuffingCHFGeneralPDE}, the chf $\varphi(\boldsymbol{u},t)$ satisfies
\begin{align} \label{eq:DuffGWNCHFPDE}
\mathcal{Q}[\varphi(\boldsymbol{u},t)] = \frac{\partial \varphi(\boldsymbol{u},t)}{\partial t} - (u_1 + 2 \zeta \nu u_2 ) \frac{\partial \varphi(\boldsymbol{u},t)}{\partial u_2} - \nu^2 u_2 \frac{\partial \varphi(\boldsymbol{u},t)}{\partial u_1} + \nu^2 \alpha u_2 \frac{\partial^3 \varphi(\boldsymbol{u},t)}{\partial u_1^3} - \frac{\pi g_0}{2} u_2^2 \varphi(\boldsymbol{u},t) = 0
\end{align}
with initial condition
$\varphi(\boldsymbol{u},0) = \exp \left( -\frac{1}{2} 
\boldsymbol{u}'
\begin{bmatrix}
  1 & \rho \\
  \rho & 1
\end{bmatrix}
\boldsymbol{u}
\right).$ This PDE was solved on the truncated domain $\boldsymbol{u} \in [-6,6]^2.$

The neural network approximation $\widetilde{\varphi}(\boldsymbol{u},t)$ we seek is equipped with an architecture that constitutes an input layer with 3 neurons, an output layer with 1 neuron, and 5 hidden layers with 50 neurons each. The output layer only has 1 neuron because we can leverage on prior probabilistic information on~\eqref{eq:DuffingGWNsystem} to deduce that $\varphi(\boldsymbol{u},t)$ is real-valued. To see this, by using the fact that $B(t)$ and $-B(t)$ identically distributed, it follows that $(X_1(t),X_2(t))$ and $(-X_1(t),-X_2(t))$ are identically distributed since both sets of random vectors satisfy the SDE~\eqref{eq:DuffingGWNsystem}. This implies that $f(\boldsymbol{x},t)$ is symmetric with respect to the spatial origin and hence, $\varphi(\boldsymbol{u},t)$ has imaginary part 0. To compute the loss function, the collocation points we utilized were a regular grid of $N_{IC} = 33\times 33$ points in $\boldsymbol{u} \in [-6,6]^2$ as well as latin hypercube samples with $N_0 = 100$ points in $t \in [0,1]$ and $N_{Op} = 100000$ points in $(\boldsymbol{u},t) \in [-6,6]^2 \times [0,1]$ which resulted in a loss value of  $5.3324147\times 10^{-5}$.

%The stationary distribution is on page. 220 of the random vibration book.

\begin{figure}[h!]
		\centering
		\includegraphics[width = 0.45\textwidth]		
						{./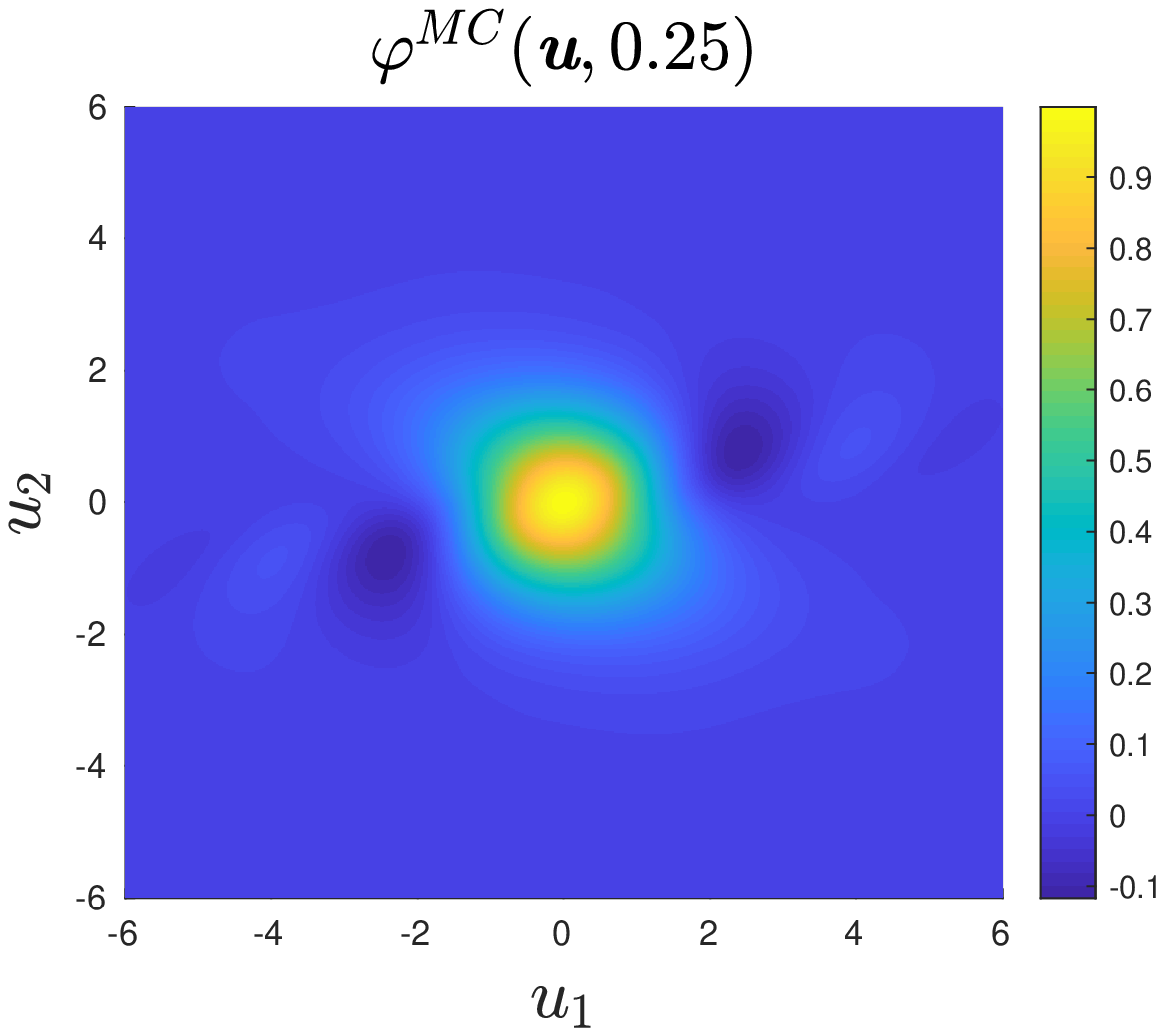} 
						\hspace{3em}
		\includegraphics[width = 0.45\textwidth]		
						{./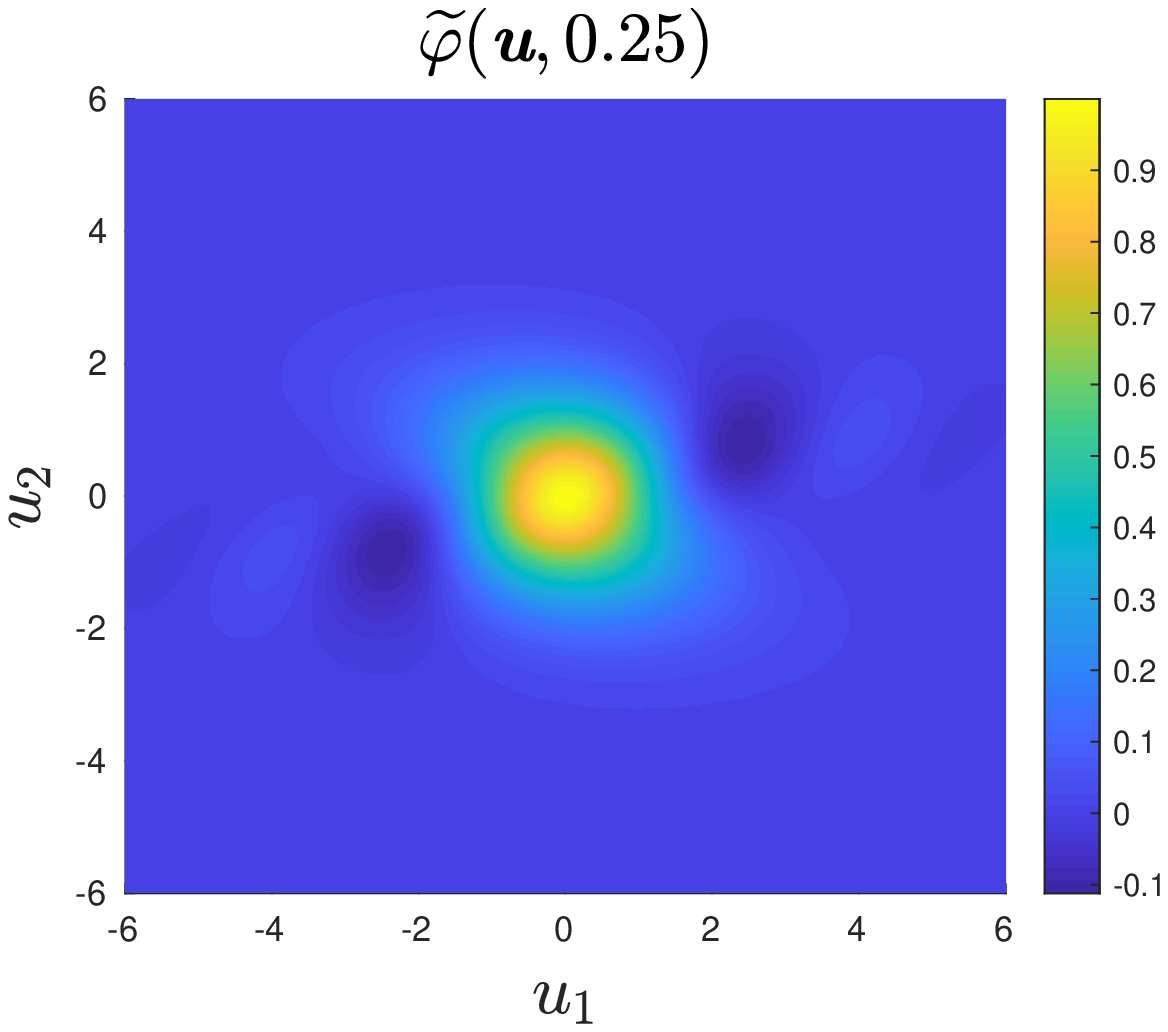}
		\caption{Comparison between the chf $\varphi^{MC}(\boldsymbol{u},t)$ obtained from Monte Carlo samples (left) and the neural network approximation $\widetilde{\varphi}(\boldsymbol{u},t)$ (right) for Section~\ref{subsubsec:DuffGWNCharFun} at $t=0.25$.}
		\label{fig:DuffGWNCHFTime1}
\end{figure}

\begin{figure}[h!]
		\centering
		\includegraphics[width = 0.45\textwidth]		
						{./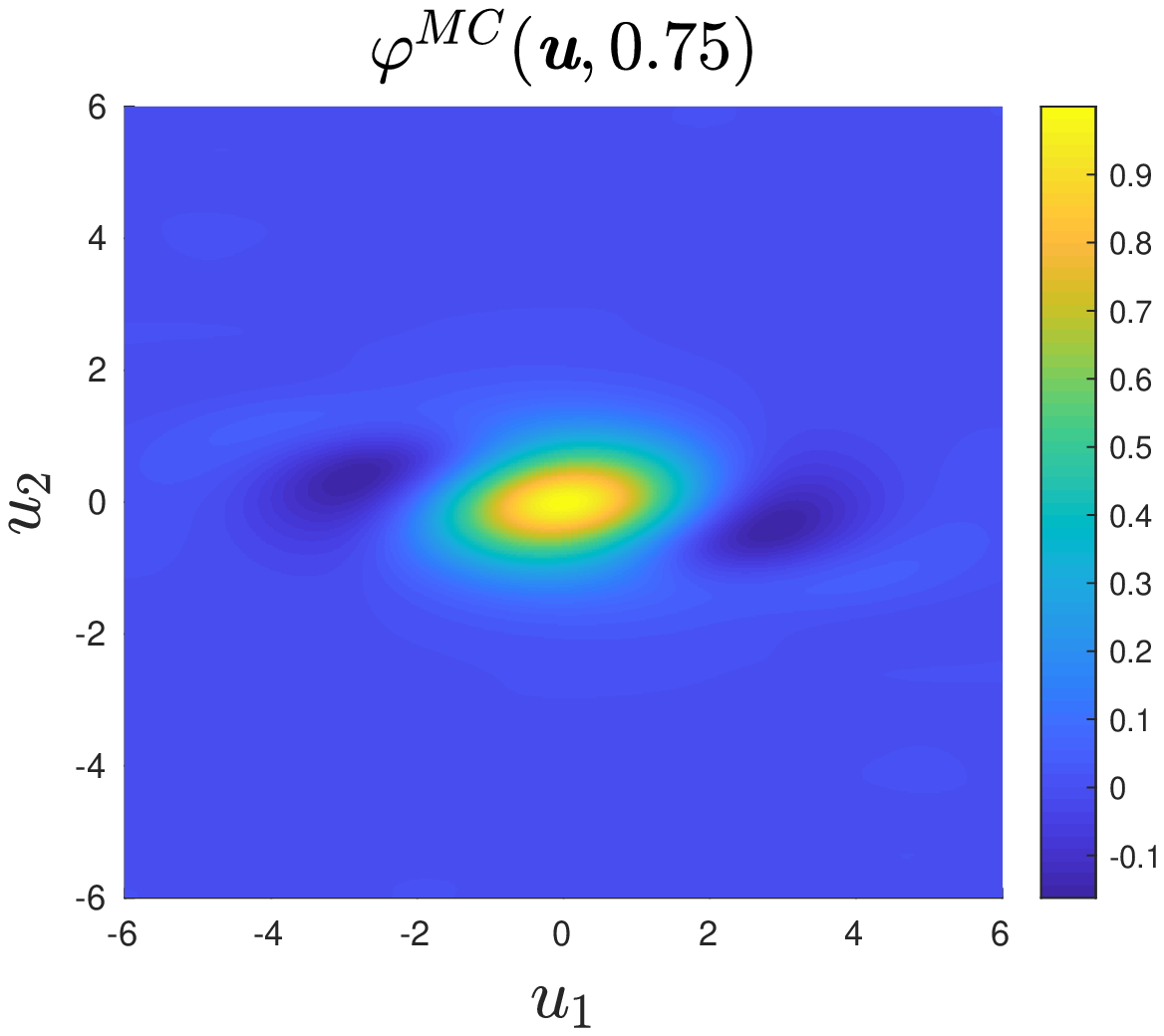} 
						\hspace{3em}
		\includegraphics[width = 0.45\textwidth]		
						{./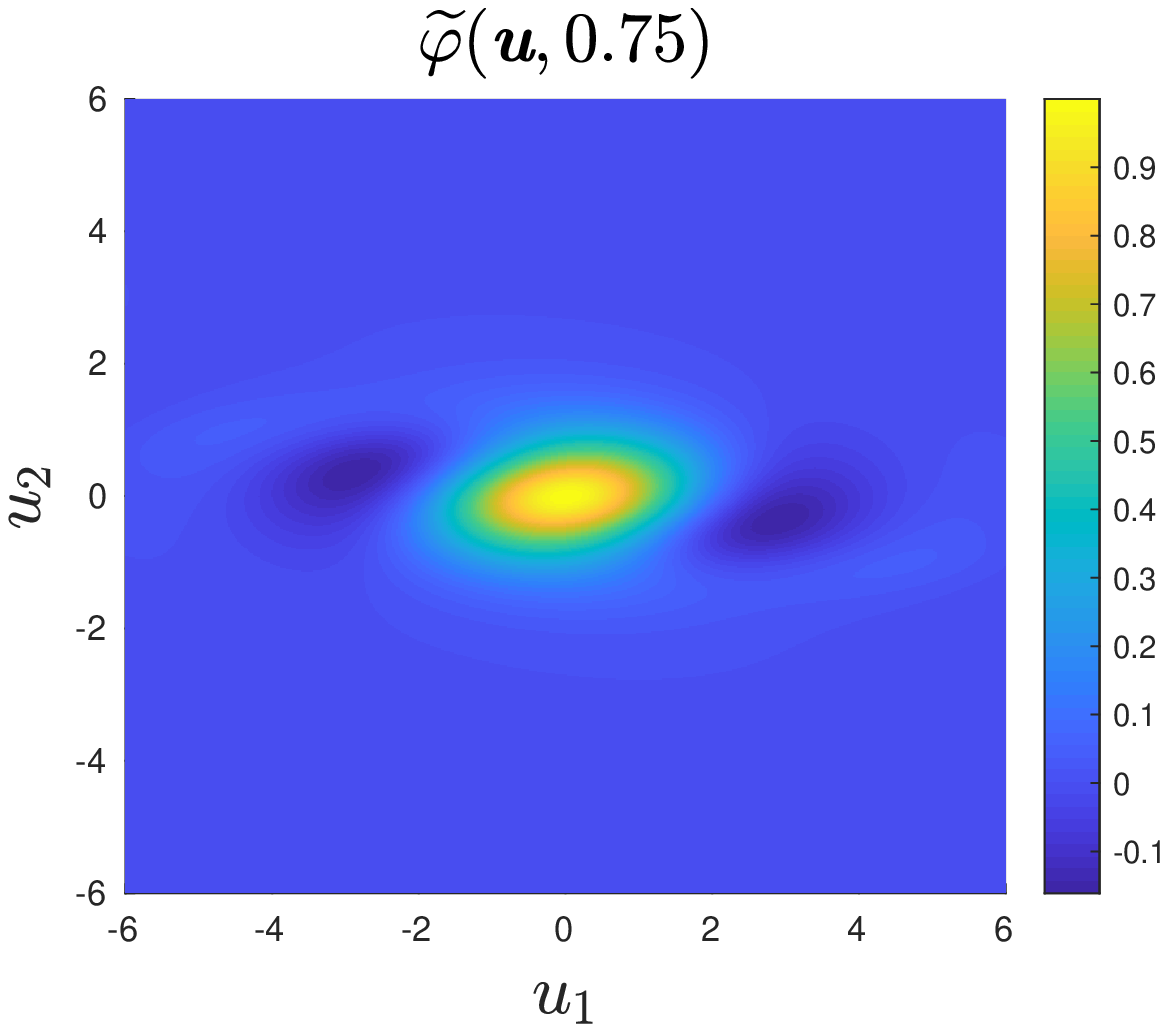}
		\caption{Comparison between the chf $\varphi^{MC}(\boldsymbol{u},t)$ obtained from Monte Carlo samples (left) and the neural network approximation $\widetilde{\varphi}(\boldsymbol{u},t)$ (right) for Section~\ref{subsubsec:DuffGWNCharFun} at $t=0.75$.}
		\label{fig:DuffGWNCHFTime2}
\end{figure}

Figures~\ref{fig:DuffGWNCHFTime1} and~\ref{fig:DuffGWNCHFTime2} illustrate the neural network approximation $\widetilde{\varphi}(\boldsymbol{u},t)$ at $t=0.25$ and $t=0.75$, respectively. In each figure, $\widetilde{\varphi}(\boldsymbol{u},t)$ (right subplot) is compared to $\varphi^{MC}(\boldsymbol{u},t)$ (left subplot) which is an estimate of the chf of $(X_1(t),X_2(t))$ based on Monte Carlo simulation.  The left and right panels of each figure support the observation that $\widetilde{\varphi}(\boldsymbol{u},t)$ adequately represents the target chf $\varphi(\boldsymbol{u},t)$.

To reconcile this approach with the approach based on the Fokker-Planck equation in Section~\ref{subsubsec:DuffGWNFP}, we apply the Fourier transform to $\widetilde{\varphi}(\boldsymbol{u},t)$ to determine an estimate of the pdf of $X_1(t)$, i.e. $\widetilde{f}_1(x_1,t) = \frac{1}{2\pi} \int_{-6}^6 e^{-iu_1 x_1} \widetilde{\varphi}(u_1,0,t) \,du_1$. Figure~\ref{fig:DuffGWNchfX1hist} presents plots of $\widetilde{f}_1(x_1,t)$ with a histogram of samples of $X_1(t)$ for $t=0.25$ (left) and $t = 0.75$ (right). The plots in Figures~\ref{fig:DuffGWNchfX1hist} and~\ref{fig:DuffGWNFPX1Hist} confirm that solving the PDE for the chf to obtain the pdf of $X(t)$ offers an alternative approach that is consistent with solving the Fokker-Planck equation as we expected.

In summary, Section~\ref{subsec:DuffingFPvsCHF} contrasted two approaches to construct a neural network representation for the pdf. The disadvantage of the approach elaborated in Section~\ref{subsubsec:DuffGWNFP} is that the loss value for $\widetilde{v}(\boldsymbol{x},t)$ may not be indicative of the accuracy of $\widetilde{f}(\boldsymbol{x},t)$. Although the approach in Section~\ref{subsubsec:DuffGWNCharFun} does not possess such challenges, automatic differentiation would have to be invoked for partial derivatives of order greater than 2 for the terms appearing in the chf PDE. In our experience, this meant a slower calculation of the loss function for the neural network during the training process.

\begin{figure}[h!]
		\centering
		\includegraphics[width = 0.47\textwidth]		
						{./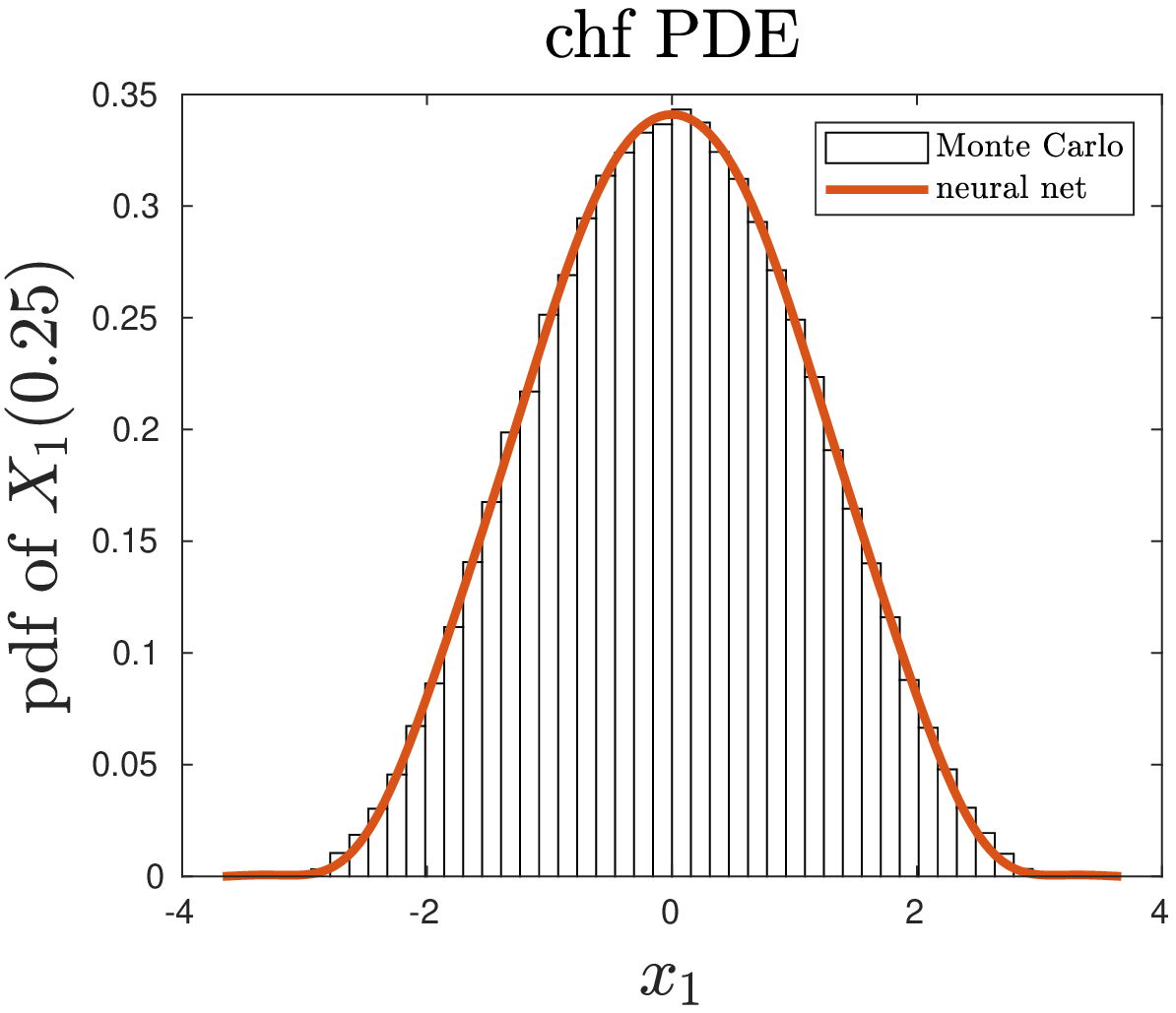} 
						\hspace{3em}
		\includegraphics[width = 0.44\textwidth]		
						{./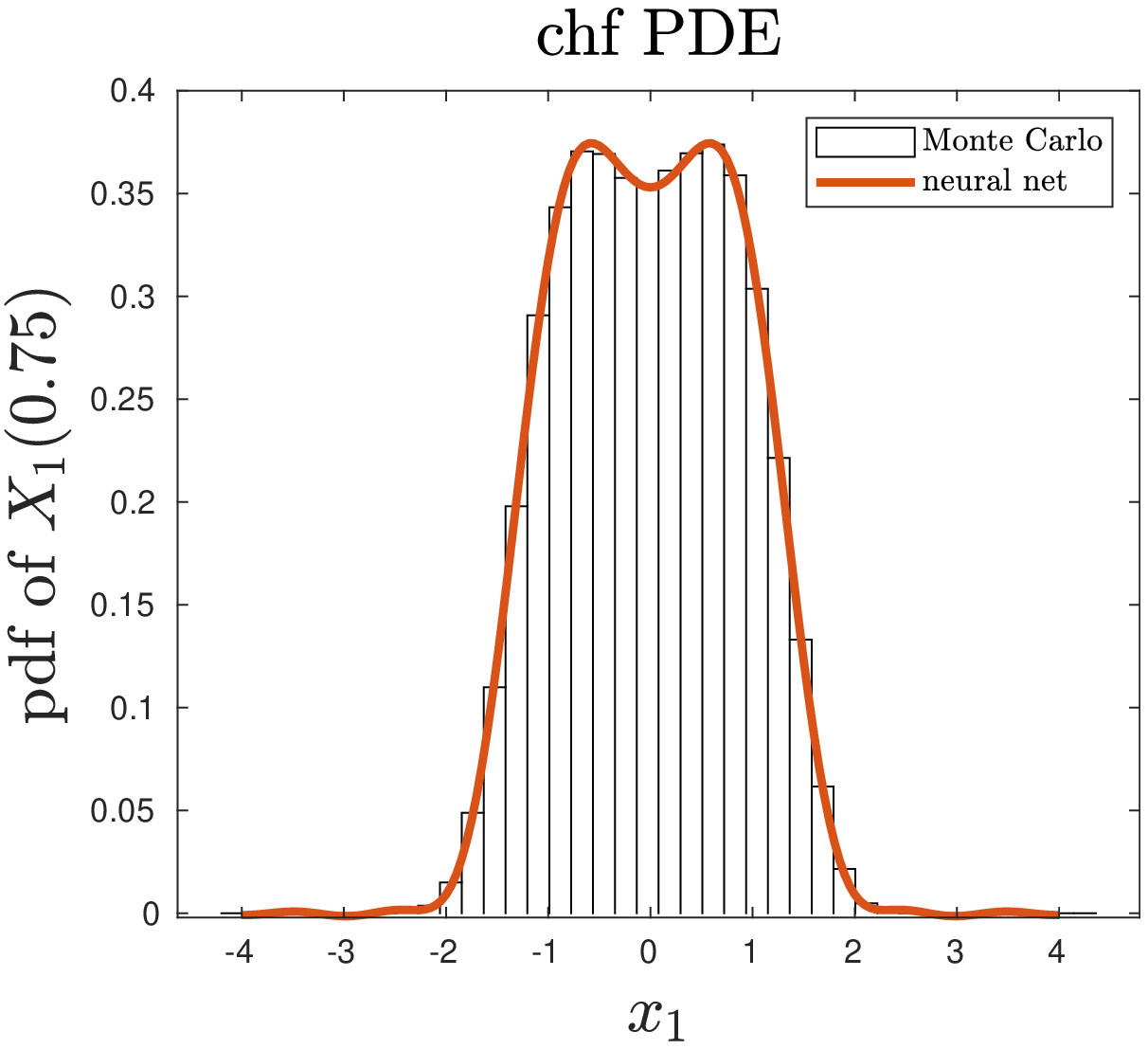}
		\caption{Comparison between the neural network approximation $\widetilde{f}_1(x_1,t)$ to the pdf of $X_1(t)$ and the histogram based on Monte Carlo samples of $X_1(t)$ for Section~\ref{subsubsec:DuffGWNCharFun} at $t=0.25$ (left) and $t = 0.75$ (right). }
		\label{fig:DuffGWNchfX1hist}
\end{figure}

\subsection{Duffing oscillator with Poisson white noise} \label{subsec:DuffingPWNvsBM}

%\begin{example} \label{ex:PWNvsBM}
%We revisit the Duffing oscillator introduced in Example~\ref{ex:DuffingGWNFPvsCHF} where instead $W(t)$ in~\eqref{eq:DuffOscillGWNSDE} is Poisson white noise, i.e. $W(t)$ is the formal derivative~\cite{book:Grigoriu2002} $\frac{dC(t)}{dt}$ where $C(t) = \sum_{k=1}^{N(t)}Y_k$ is a compound Poisson process described by a Poisson process $N(t)$ with intensity $\lambda$ and  random variables $Y_k$ which are independent copies of $Y$. For distinction, denote by $(X_1^{\lambda}(t),X_2^{\lambda}(t))$ the state variables. It will be numerically confirmed using neural network approximations for the chf of $(X^{\lambda}_1(t),X^{\lambda}_2(t))$ that as $\lambda \rightarrow \infty$, the distribution of $(X^{\lambda}_1(t),X^{\lambda}_2(t))$ approximates that of the distribution of $(X_1(t),X_2(t))$ in Example~\ref{ex:DuffingGWNFPvsCHF} in which $W(t)$ is Gaussian white noise.
%\end{example}

We revisit the Duffing oscillator introduced in Section~\ref{subsec:DuffingFPvsCHF} where instead, $W(t)$ in~\eqref{eq:DuffOscillGWNSDE} is Poisson white noise~\cite{book:Grigoriu2002}, i.e. $W(t)$ is the formal derivative $\frac{dC^{\lambda}(t)}{dt}$ where $C^{\lambda}(t) = \sum_{k=1}^{N(t)}Y_k$ is a compound Poisson process described by a Poisson process $N(t)$ with intensity $\lambda$ and  random variables $Y_k$ which are independent copies of $Y$. For distinction, denote by $(X_1^{\lambda}(t),X_2^{\lambda}(t))$ the state variables of~\eqref{eq:DuffOscillGWNSDE} if $W(t)$ is Poisson white noise while $(X_1(t),X_2(t))$ refers to the state due to Gaussian white noise as in Section~\ref{subsec:DuffingFPvsCHF}. It will be numerically confirmed using neural network approximations for the chf of $(X^{\lambda}_1(t),X^{\lambda}_2(t))$ that as $\lambda \rightarrow \infty$, the distribution of $(X^{\lambda}_1(t),X^{\lambda}_2(t))$ approximates that of $(X_1(t),X_2(t))$.

We utilize the same parameters and initial condition as in Section~\ref{subsec:DuffingFPvsCHF} and set $Y \sim N(0,\sigma^2)$. The values for $\lambda, \sigma^2$ are chosen such that $\pi g_0 = \lambda E[Y^2]$ to ensure that the second moment properties of the random forcing in Section~\ref{subsec:DuffingFPvsCHF}, $\sqrt{\pi g_0} B(t)$, and in this section, $C^{\lambda}(t)$, are identical.
It was demonstrated analytically and numerically in~\cite{paper:Grigoriu2009} that under some conditions on $Y_k$, $(X_1^{\lambda}(t),X_2^{\lambda}(t)) \rightarrow (X_1(t),X_2(t))$ in probability as $\lambda \rightarrow \infty$ if the system~\eqref{eq:DuffingGWNsystem} is linear with additive noise. As~\eqref{eq:DuffingGWNsystem} is nonlinear, the following heuristic can be used to illustrate that  $(X_1^{\lambda}(t),X_2^{\lambda}(t))$ still converges in probability to $(X_1(t),X_2(t))$. Consider the system
\begin{align*}
d \boldsymbol{X}(t) & = \boldsymbol{a}(\boldsymbol{X}(t)) \,dt + \boldsymbol{b}  \sqrt{\pi g_0} \,dB(t), \quad \text{and} \\
d \boldsymbol{X}^{\lambda}(t) & = \boldsymbol{a}(\boldsymbol{X}^{\lambda}(t)) \,dt + \boldsymbol{b} \,dC^{\lambda}(t)
\end{align*}
where
$\boldsymbol{X}(t) = (X_1(t),X_2(t)), \boldsymbol{X}^{\lambda}(t) = (X_1^{\lambda}(t),X_2^{\lambda}(t))$, $\boldsymbol{a}(x_1,x_2) = 
\begin{bmatrix}
x_2 \\
-\nu^2 (x_1 + \alpha x_1^3) - 2\zeta \nu x_2
\end{bmatrix} 
$ and $\boldsymbol{b} = \begin{bmatrix}
0\\1
\end{bmatrix}
$. Since for every $t \ge 0$, $C^{\lambda}(t)$ converges in probability to $\sqrt{\pi g_0} B(t)$ as $\lambda \rightarrow \infty$ \cite{book:Skorohod1982}, so do the increments  $dC^{\lambda}(t)$ and $\sqrt{\pi g_0} \, dB(t)$. The convergence of $\boldsymbol{X}^{\lambda}(t)$ to $\boldsymbol{X}(t)$ follows because they depend continuously on $dC^{\lambda}(t)$ and $\sqrt{\pi g_0} \, dB(t)$, respectively. We  now verify this qualitatively by visually inspecting sample paths of the state variables in the following figures. Figure~\ref{fig:DuffGWNSamplePaths} corresponds to $(X_1(t),X_2(t))$ in Section~\ref{subsec:DuffingFPvsCHF} while Figures~\ref{fig:DuffPWNSamplePathAlmostBM} and~\ref{fig:DuffPWNSamplePathSmallInt} correspond to $(X_1^{\lambda}(t),X_2^{\lambda}(t))$ for $E[Y^2] = 0.01$ and $E[Y^2] = 3$, respectively, with $\lambda = \frac{\pi g_0}{E[Y^2]}$. The figures certify that for larger values of $\lambda$, the sample paths of $(X_1(t),X_2(t))$
and $(X_1^{\lambda}(t),X_2^{\lambda}(t))$ are almost indistinguishable, especially in the second state variable.

% GWN samples

\begin{figure}[h!]
		\centering
		\includegraphics[width = 0.46\textwidth]		
						{./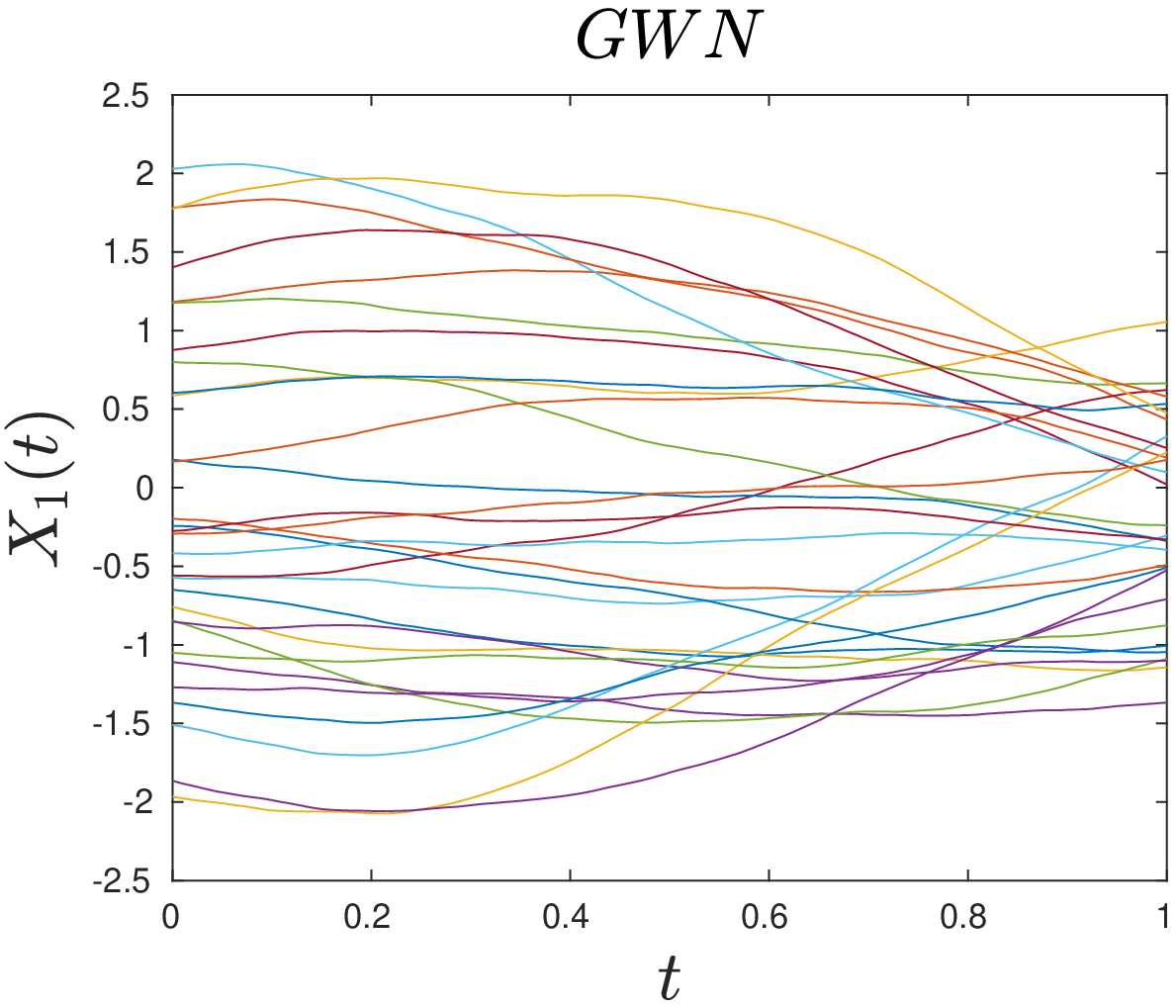} 
						\hspace{1.3em}
		\includegraphics[width = 0.45\textwidth]		
						{./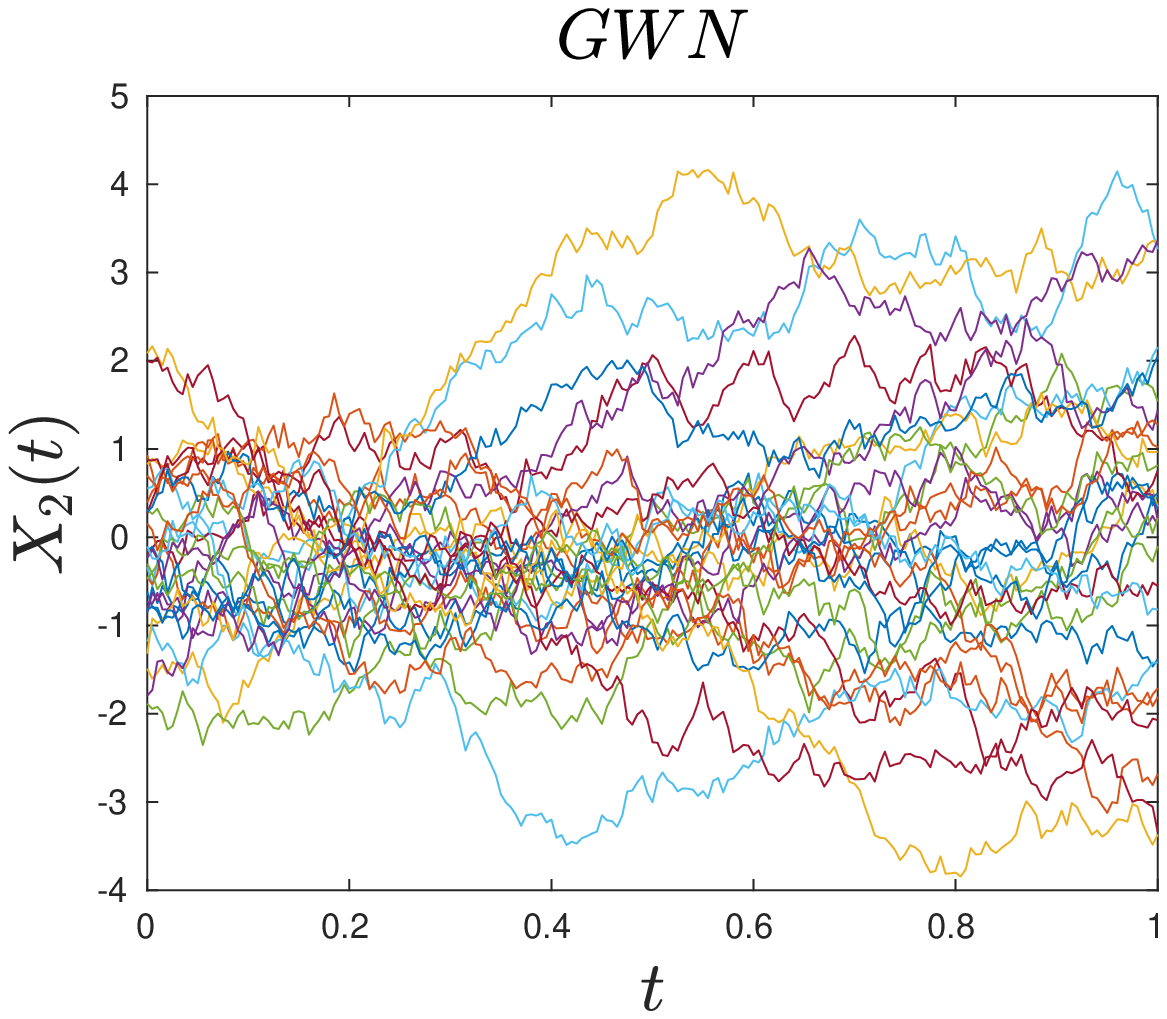}
		\caption{Sample paths of $X_1(t)$ (left) and $X_2(t)$ (right) for the Duffing oscillator subject to Gaussian white noise in Section~\ref{subsec:DuffingFPvsCHF}.}
		\label{fig:DuffGWNSamplePaths}
\end{figure}

% PWN almost bm samples

\begin{figure}[h!]
		\centering
		\includegraphics[width = 0.5\textwidth]		
						{./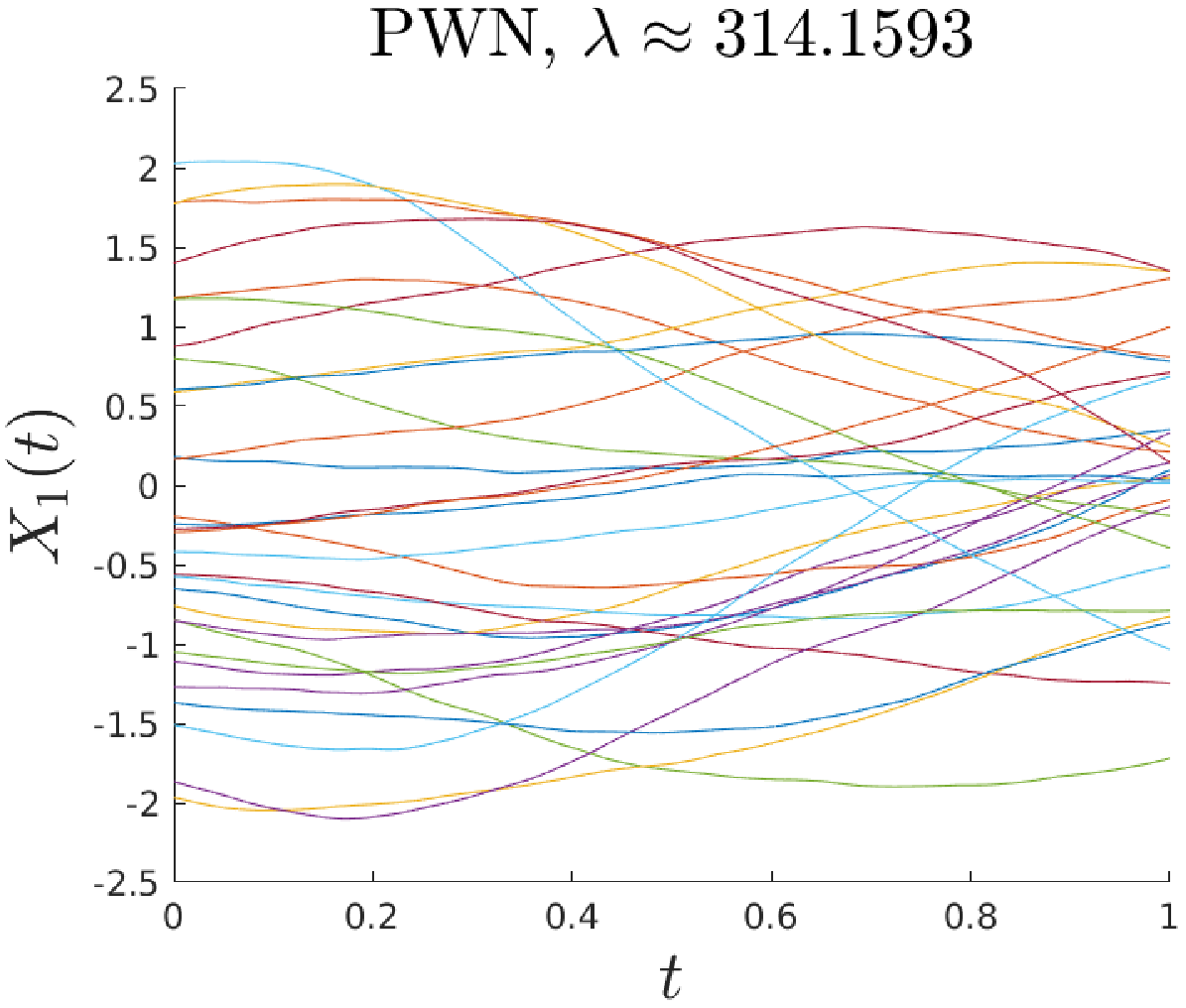} 
						\hspace{-0.75em}
		\includegraphics[width = 0.5\textwidth]		
						{./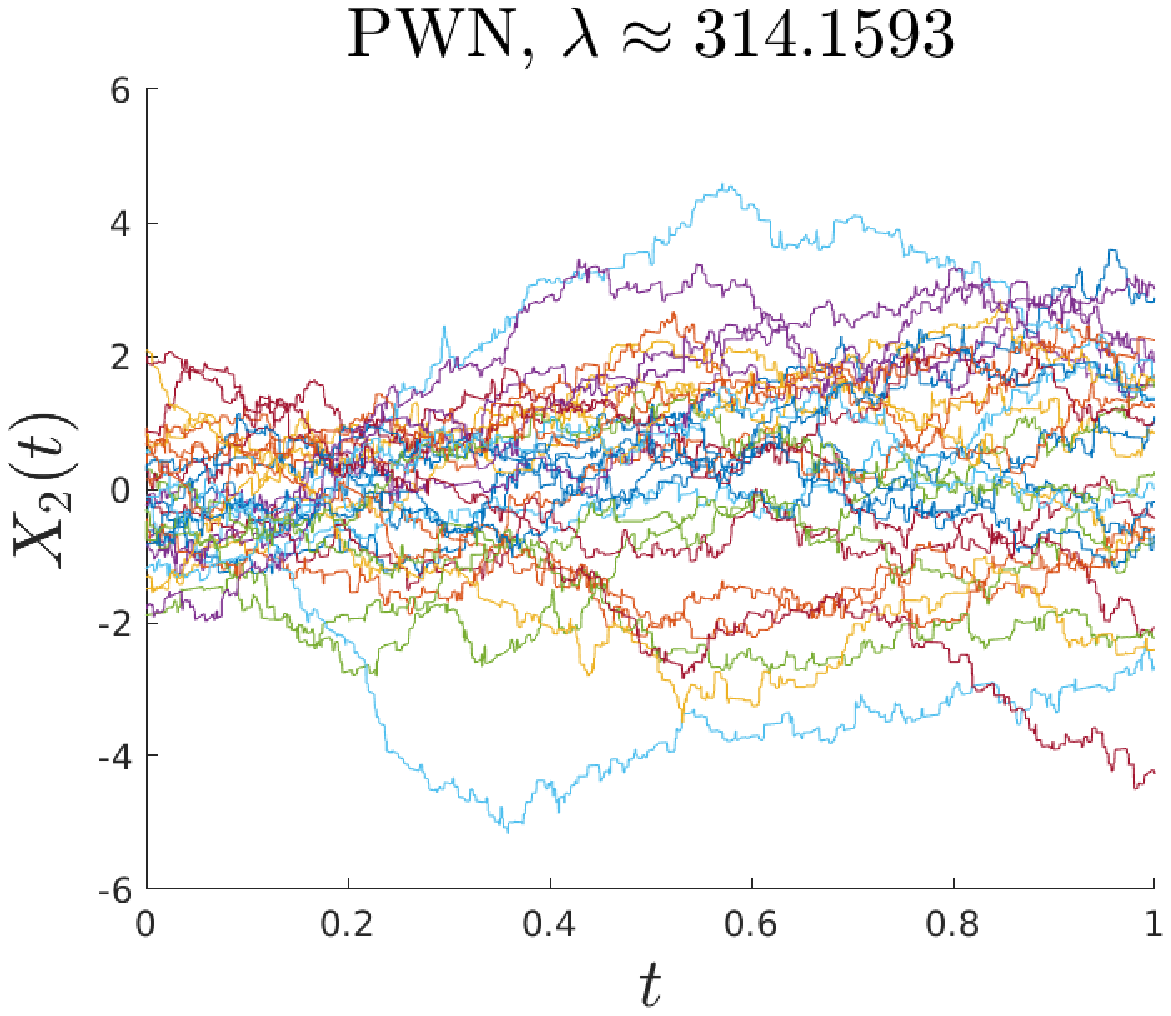}
		\caption{Sample paths of $X^{\lambda}_1(t)$ (left) and $X^{\lambda}_2(t)$ (right) for the Duffing oscillator subject to Poisson white noise in Section~\ref{subsec:DuffingPWNvsBM} with $E[Y^2] = 0.01$.}
		\label{fig:DuffPWNSamplePathAlmostBM}
\end{figure}

% PWN small intensity

\begin{figure}[h!]
		\centering
		\includegraphics[width = 0.45\textwidth]		
						{./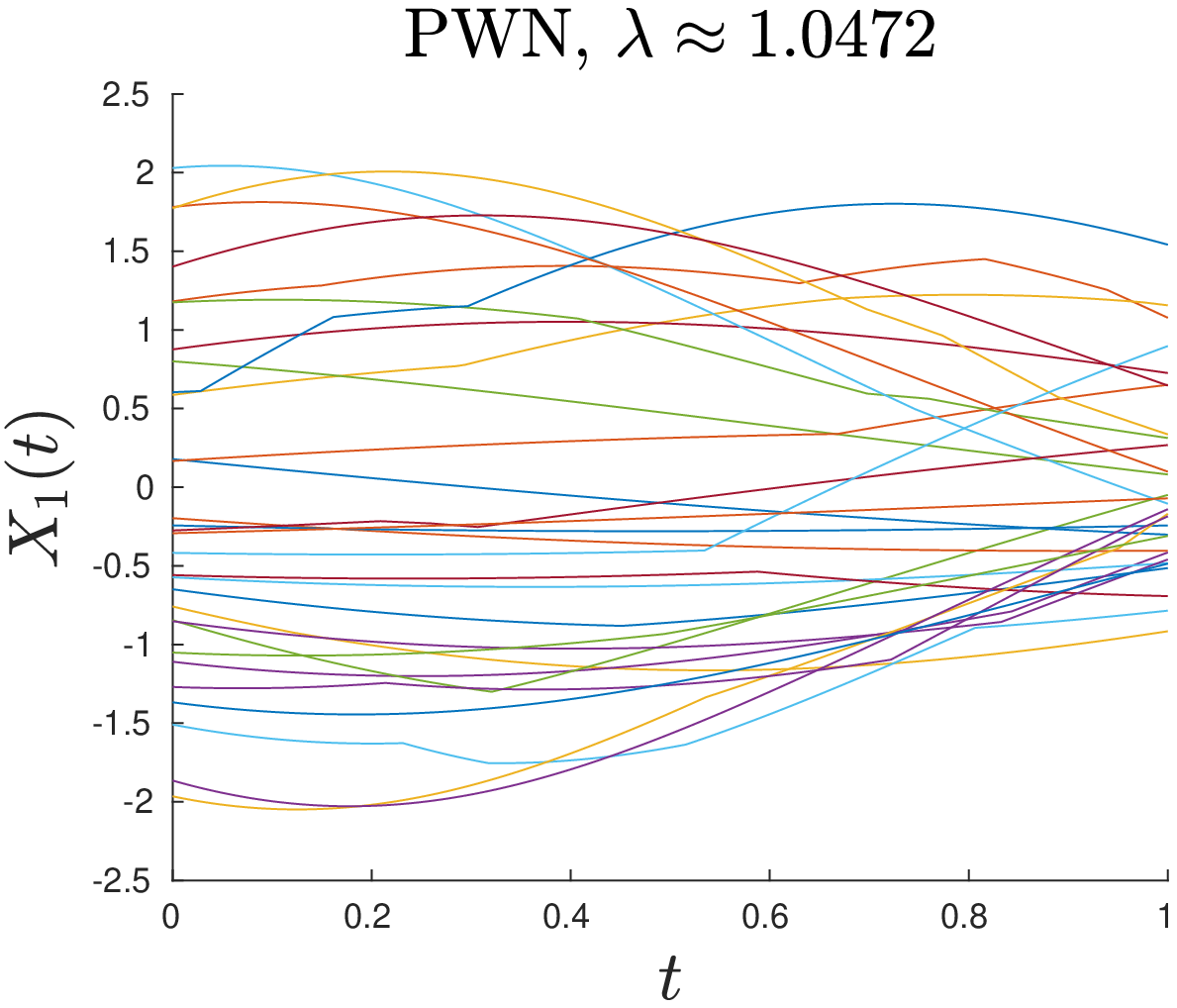} 
						\hspace{2em}
		\includegraphics[width = 0.43\textwidth]		
						{./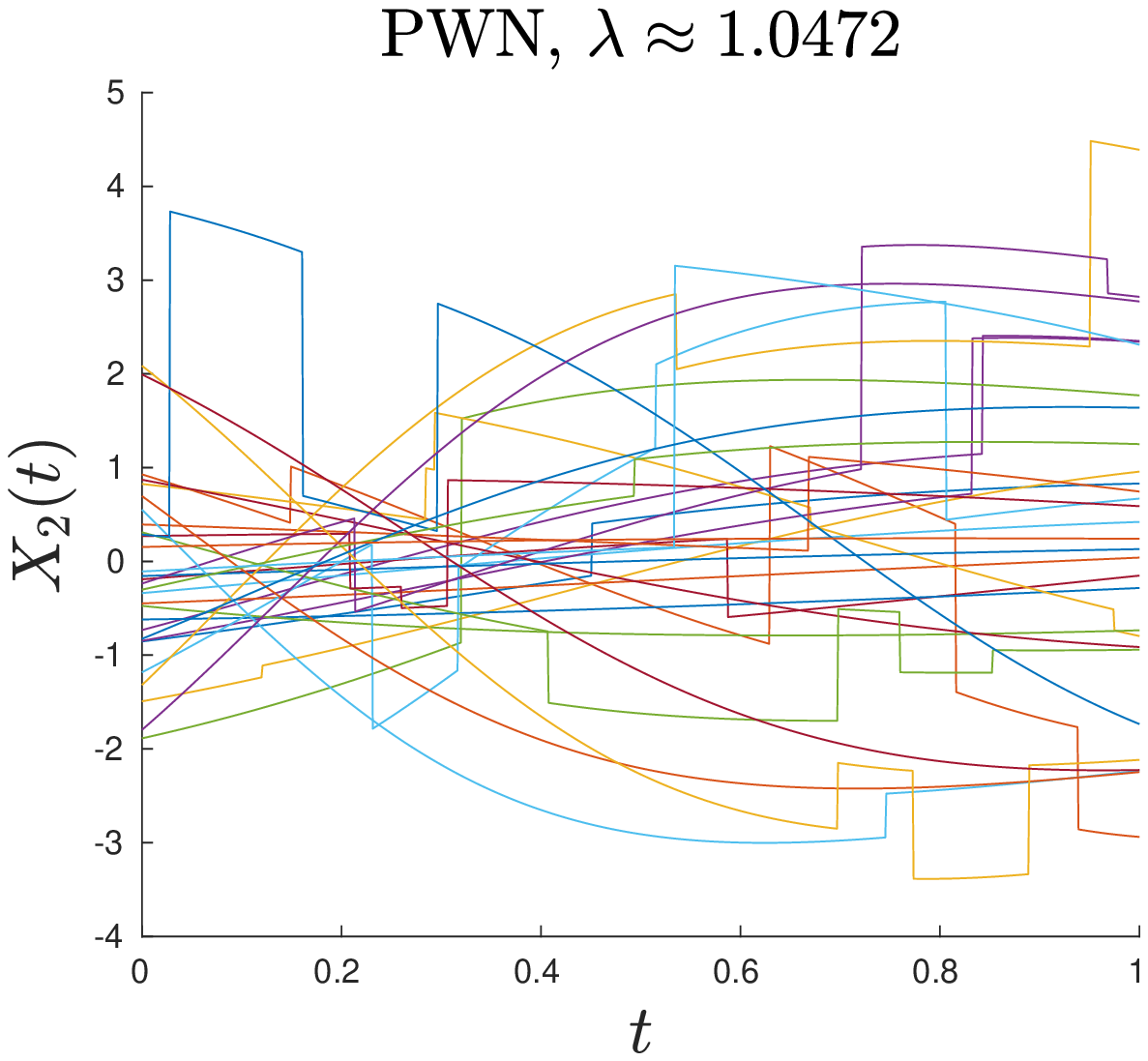}
		\caption{Sample paths of $X^{\lambda}_1(t)$ (left) and $X^{\lambda}_2(t)$ (right) for the Duffing oscillator subject to Poisson white noise in Section~\ref{subsec:DuffingPWNvsBM} with $E[Y^2] = 3$.}
		\label{fig:DuffPWNSamplePathSmallInt}
\end{figure}

Our objective is then to reproduce the same conclusion by investigating the characteristic function of $(X_1^{\lambda}(t),X_2^{\lambda}(t))$. From~\eqref{eq:DuffingCHFGeneralPDE}, $\varphi(\boldsymbol{u},t)$ satisfies the PDE 
\begin{align} \label{eq:DuffPWNCHFPDE}
\mathcal{Q}[\varphi(\boldsymbol{u},t)] = \frac{\partial \varphi(\boldsymbol{u},t)}{\partial t} - (u_1 + 2 \zeta \nu u_2 ) \frac{\partial \varphi(\boldsymbol{u},t)}{\partial u_2} - \nu^2 u_2 \frac{\partial \varphi(\boldsymbol{u},t)}{\partial u_1} + & \nu^2 \alpha u_2 \frac{\partial^3 \varphi(\boldsymbol{u},t)}{\partial u_1^3}  \notag\\ & -\lambda \varphi(\boldsymbol{u},t)  \,[\phi (u_2)-1] = 0
\end{align}
where $\phi(\cdot)$ is the chf of $Y$. Similar calculations as above highlight that $\varphi(\boldsymbol{u},t)$ is real-valued since $Y$ and $-Y$ and consequently, $C^{\lambda}(t)$ and $-C^{\lambda}(t)$ have the same distribution.

We sought a neural network approximation $\widetilde{\varphi}(\boldsymbol{u},t)$ on the truncated domain $\boldsymbol{u} \in [-6,6]^2$ using an architecture comprised of an input layer with 3 neurons, an output layer with 1 neuron, and 5 hidden  layers with 50 neurons each. We generated exactly the same set of collocation points as in Section~\ref{subsubsec:DuffGWNCharFun} to evaluate the loss function. Figures~\ref{fig:DuffPWNCHFAlmostBM} and~\ref{fig:DuffPWNCHFSmallInt} depict $\widetilde{\varphi}(\boldsymbol{u},t)$ for the case in which $E[Y^2] = 0.01 \,\, (\lambda \approx 314.1593)$ and $E[Y^2] = 3  \,\,(\lambda \approx 1.0472)$, respectively, at times $t = 0.25,0.75$. The loss value for the trained neural network associated with former scenario was $3.274475\times 10^{-5}$ while that of the latter was $1.2586042\times 10^{-5}$. It is clear from scrutinizing the plots in Figure~\ref{fig:DuffPWNCHFAlmostBM} and that of Figures~\ref{fig:DuffGWNCHFTime1} and~\ref{fig:DuffGWNCHFTime2} that the chf of $(X_1^{\lambda}(t),X_2^{\lambda}(t))$ for large $\lambda$ appears identical to that of $(X_1(t),X_2(t))$ subject to Gaussian white noise. In contrast, the plots in Figure~\ref{fig:DuffPWNCHFSmallInt}  differ from the previously mentioned figures which was expected owing to the asymptotic results discussed above.

\begin{figure}[h!]
		\centering
		\includegraphics[width = 0.45\textwidth]		
						{./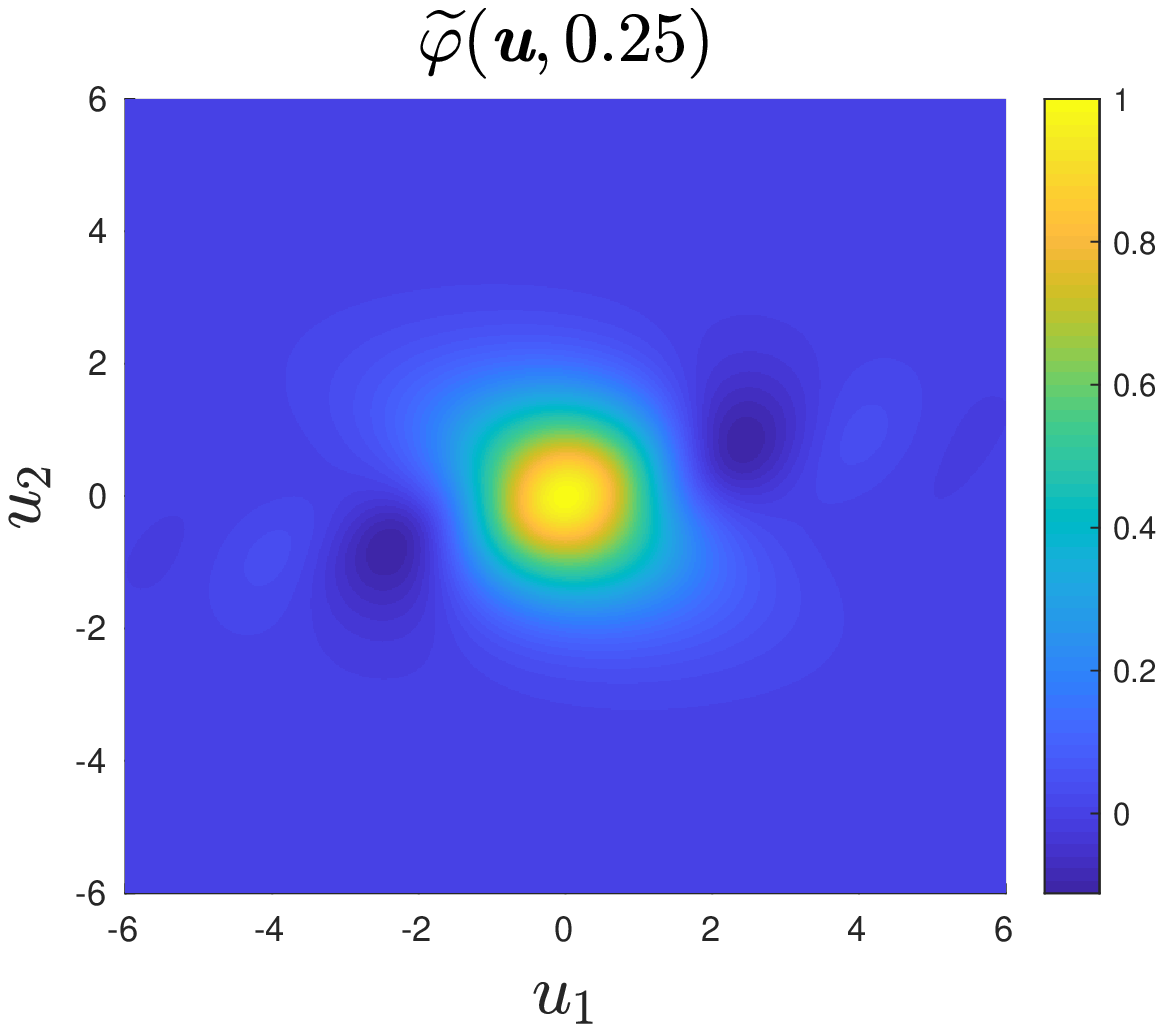} 
						\hspace{3em}
		\includegraphics[width = 0.45\textwidth]		
						{./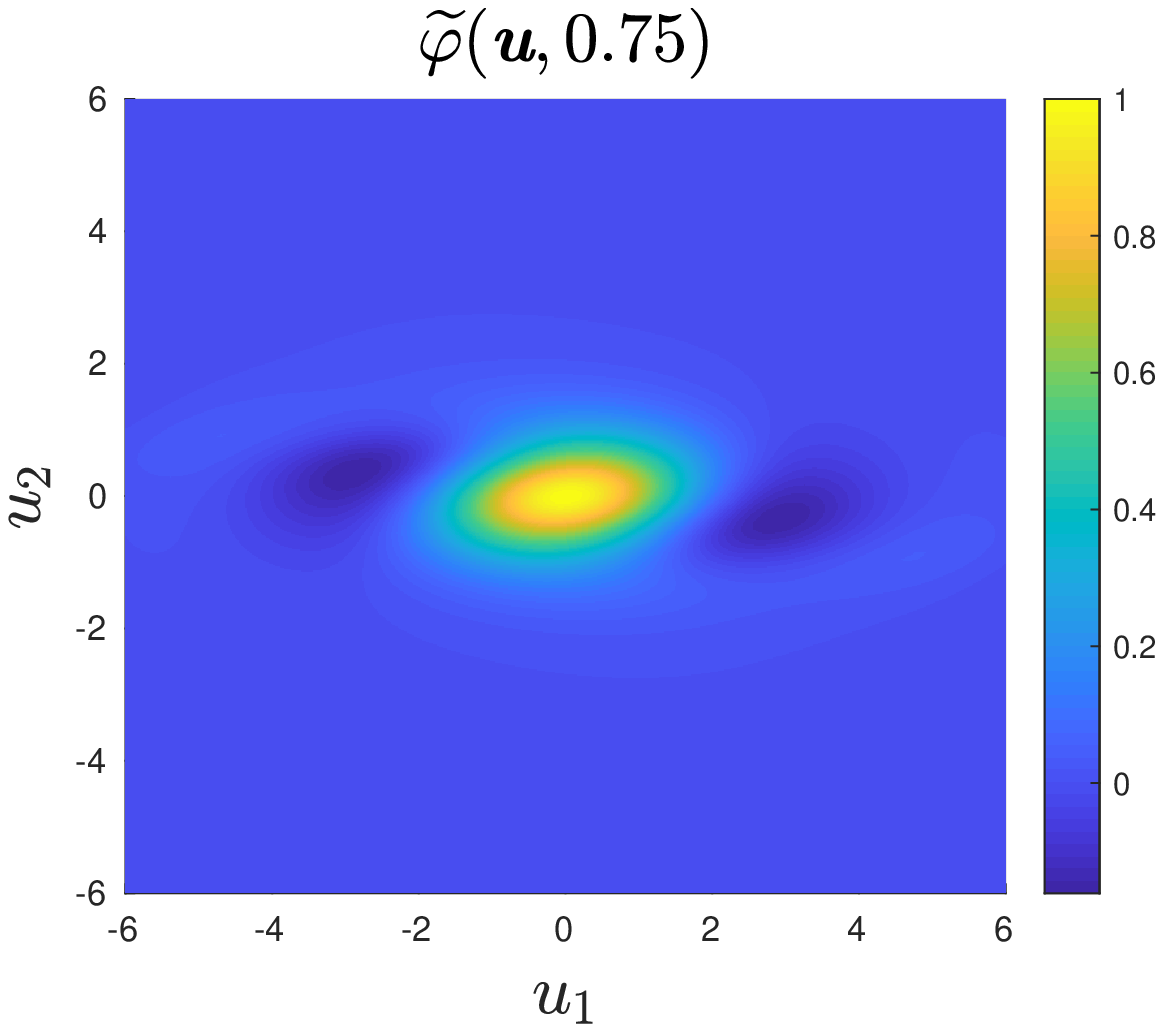}
		\caption{Plots of the neural network approximation $\widetilde{\varphi}(\boldsymbol{u},t)$ for Section~\ref{subsec:DuffingPWNvsBM} with $E[Y_k^2] = 0.01$ at $t=0.25$ (left) and $t=0.75$ (right).}
		\label{fig:DuffPWNCHFAlmostBM}
\end{figure}

\begin{figure}[h!]
		\centering
		\includegraphics[width = 0.43\textwidth]		
						{./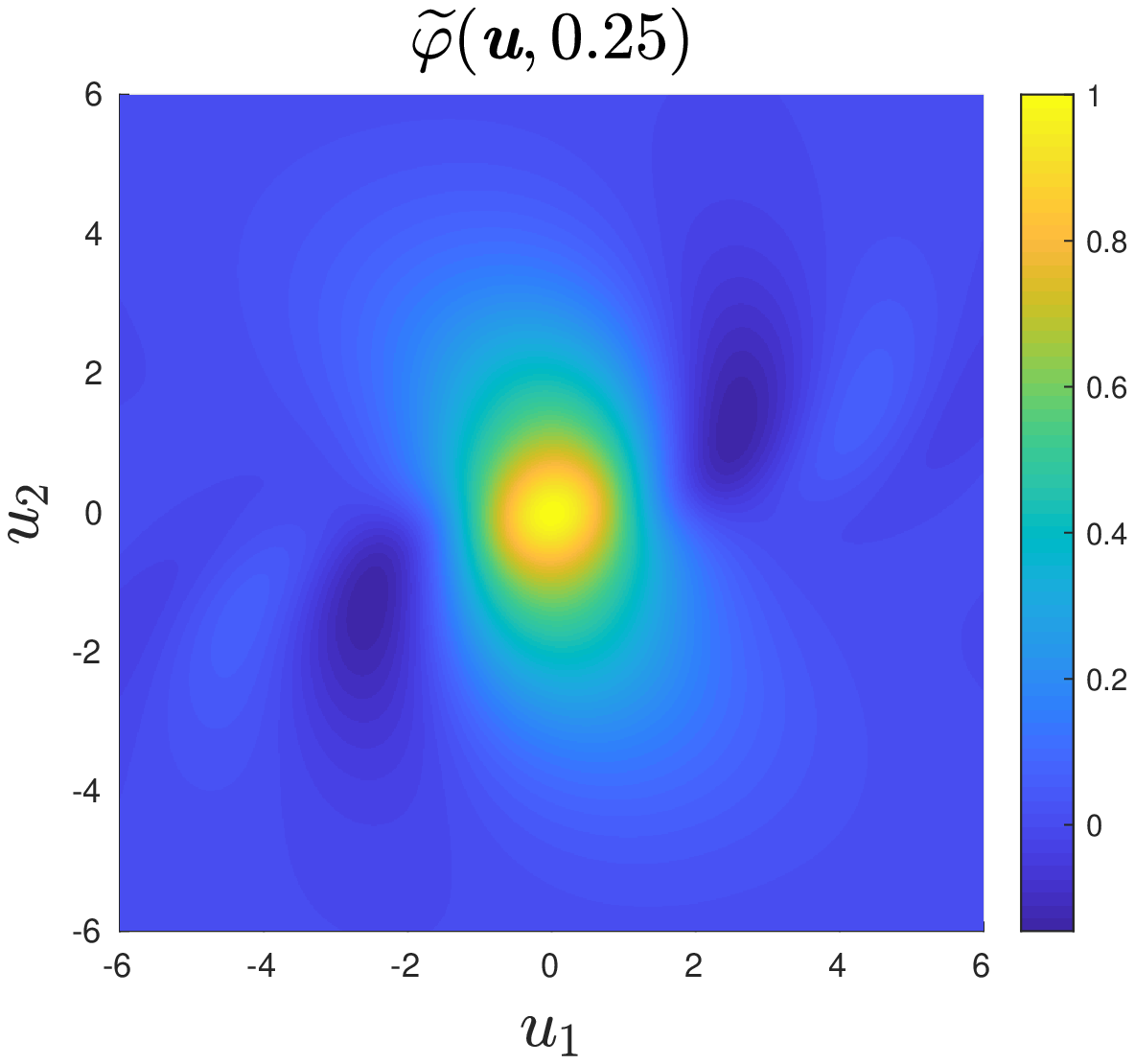} 
						\hspace{3em}
		\includegraphics[width = 0.46\textwidth]		
						{./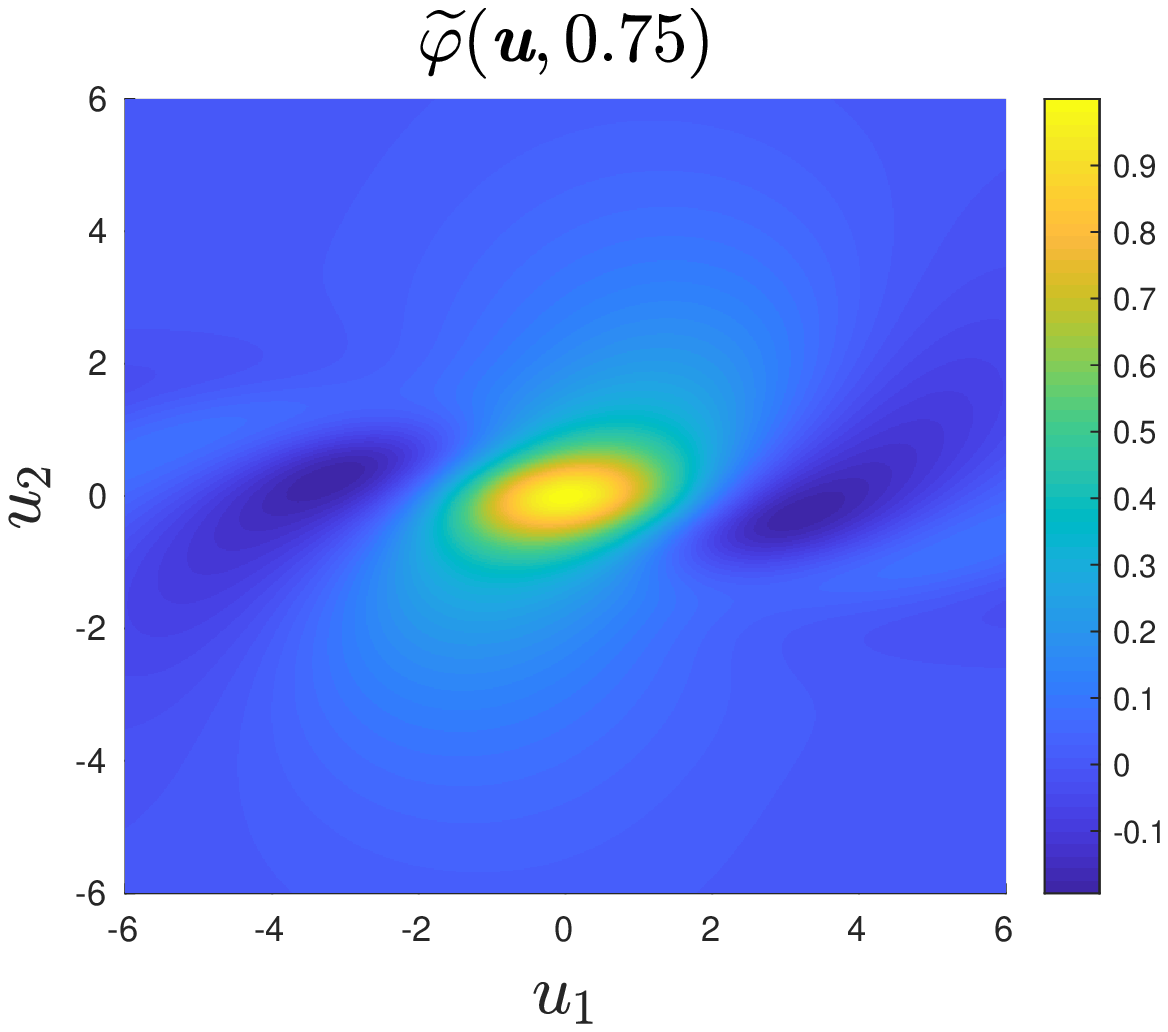}
		\caption{Plots of the neural network approximation $\widetilde{\varphi}(\boldsymbol{u},t)$ for Section~\ref{subsec:DuffingPWNvsBM} with $E[Y_k^2] = 3$ at $t=0.25$ (left) and $t=0.75$ (right).}
		\label{fig:DuffPWNCHFSmallInt}
\end{figure}

\subsection{Characteristic function for a 3-dimensional state vector} \label{subsec:3Dexample}

%\begin{example} \label{ex:CHF3D}
%Suppose that $X(t)$, $t \in [0,1]$, represents the displacement of a damped harmonic oscillator in which the external forcing is a non-Gaussian process with continuous samples. Let $X(t)$ satisfy the SDE
%\begin{align} \label{eq:3DSDEEx}
%& \ddot{X}(t) + \beta \dot{X}(t) + \nu^2 X(t) = S(t)^3, \notag \\
%& dS(t) = -\alpha S(t) \,dt + \sigma \sqrt{2 \alpha} \, dB(t)
%\end{align}
%or in system form,
%\begin{align} \label{eq:3SSDEExSystemForm}
%d \begin{bmatrix}
%X_1(t) \\ X_2(t) \\ X_3(t)
%\end{bmatrix}
% = 
%\begin{bmatrix}
%X_2(t) \\
%-(\nu^2 + \beta) X_1(t)  + S(t)^3 \\
%-\alpha S(t)
%\end{bmatrix} \, dt +
%\begin{bmatrix}
%0\\
%0 \\
%\sigma \sqrt{2 \alpha}
%\end{bmatrix} \, dB(t)
%\end{align}
%where $B(t)$ is the Brownian motion. It will be demonstrated that even when the frequency domain is 3-dimensional, the neural network approximation to the chf of $\boldsymbol{X}(t) = [X_1(t), X_2(t), X_3(t)]^T $ can adequately match the Monte Carlo solution.  
%\end{example}

Suppose that $X(t)$, $t \in [0,1]$, represents the displacement of a damped harmonic oscillator in which the external forcing is a non-Gaussian process with continuous samples. Let $X(t)$ satisfy the SDE
\begin{align} \label{eq:3DSDEEx}
& \ddot{X}(t) + \beta \dot{X}(t) + \nu^2 X(t) = S(t)^3, \notag \\
& dS(t) = -\alpha S(t) \,dt + \sigma \sqrt{2 \alpha} \, dB(t)
\end{align}
or in system form,
\begin{align} \label{eq:3SSDEExSystemForm}
d \begin{bmatrix}
X_1(t) \\ X_2(t) \\ X_3(t)
\end{bmatrix}
 = 
\begin{bmatrix}
X_2(t) \\
-(\nu^2 + \beta) X_1(t)  + S(t)^3 \\
-\alpha S(t)
\end{bmatrix} \, dt +
\begin{bmatrix}
0\\
0 \\
\sigma \sqrt{2 \alpha}
\end{bmatrix} \, dB(t)
\end{align}
where $B(t)$ is the Brownian motion. It will be demonstrated that even when the frequency domain is 3-dimensional, the neural network approximation to the chf of $\boldsymbol{X}(t) = [X_1(t), X_2(t), X_3(t)]^T $ can adequately match the Monte Carlo solution.

For our simulation, we consider $\boldsymbol{X}(0) \sim N(\boldsymbol{0}_{3 \times 1},\boldsymbol{I}_{3 \times 3})$ with $\boldsymbol{I}$ denoting the $3 \times 3$ identity matrix so that $\varphi(\boldsymbol{u},0) = \exp \left( -\frac{1}{2} \boldsymbol{u}' \boldsymbol{u} \right), \boldsymbol{u} = (u_1,u_2,u_3)$. The parameters we set are $\beta = 0.5, \nu = 3, \alpha = 0.12$. The target chf $\varphi^{MC}(\boldsymbol{u},t)$ was estimated through 200000 Monte Carlo samples of $\boldsymbol{X}(t)$ produced using forward Euler with time step $0.005$.

According to~\eqref{eq:CharFunPDE}, the PDE satisfied by $  \varphi(\boldsymbol{u},t)$ is \begin{align} \label{eq:FP3Dex}
\mathcal{Q}[\varphi(\boldsymbol{u},t)] =
\frac{\partial \varphi (\boldsymbol{u},t)}{\partial t} - (u_1 - \beta u_2) \frac{\partial \varphi (\boldsymbol{u},t)}{\partial u_2} + \nu^2 u_2 \frac{\partial \varphi (\boldsymbol{u},t)}{\partial u_1} & + \alpha u_3 \frac{\partial \varphi (\boldsymbol{u},t)}{\partial u_3} \notag \\ 
& + u_2  \frac{\partial^3 \varphi(\boldsymbol{u},t)}{\partial u_3^3} + \sigma^2 \alpha u_3^2 \varphi( \boldsymbol{u},t) = 0
\end{align}
whose solution we approximate on the truncated domain $(u_1,u_2,u_3) \in [-5.5,5.5]^2 \times [-3,3].$ The architecture we employed included an input layer with 4 neurons, an output layer with 1 neuron, and 5 hidden layers with 50 neurons each. Following similar arguments above, the chf of $\boldsymbol{X}(t)$ is real-valued since both $\boldsymbol{X}(t)$ and $-\boldsymbol{X}(t)$
satisfy the dynamics~\eqref{eq:3SSDEExSystemForm}. To train the neural network, latin hypercube samples were simulated for the collocation points which constituted $N_{IC} = 2000$ points in the truncated domain of $\boldsymbol{u}$, $N_{0} =100$ points in $t \in [0,1]$, and $N_{Op} = 100000$ points in $(\boldsymbol{u},t) \in [-5.5,5.5]^2 \times [-3,3] \times [0,1].$ The trained network yielded a loss value of $7.966772 \times 10^{-6}$. 

We assess the performance of $\widetilde{\varphi}(\boldsymbol{u},t)$ with respect to the target Monte Carlo solution in Figures~\ref{fig:3Dtime1},~\ref{fig:3Dtime2}, and~\ref{fig:3Dtime3}. Each figure displays $\varphi^{MC}(\boldsymbol{u},t)$ (left panel) and $\widetilde{\varphi}(\boldsymbol{u},t)$ (right panel)  evaluated at three pairs of values of $(u_3,t)$. The plots suggest that $\widetilde{\varphi}(\boldsymbol{u},t)$ can sufficiently capture the target chf. Finally, we query $\widetilde{\varphi}(\boldsymbol{u},t)$ at $u_1=u_2=0$ and various values of $t$ to acertain that it can recover the analytical expression $\varphi(0,0,u_3,t) = e^{-\frac{1}{2} u_3^2}$ which is due to the fact that $S(t) \sim N(0,1)$. In Figure~\ref{fig:3DOU}, we compile plots of $\varphi(0,0,u_3,t)$ and $\widetilde{\varphi}(0,0,u_3,t)$ which are almost indistinguishable. This further supports the approximation quality of $\widetilde{\varphi}(\boldsymbol{u},t)$. 

\begin{figure}[h!]
		\centering
		\includegraphics[width = 0.46\textwidth]		
						{./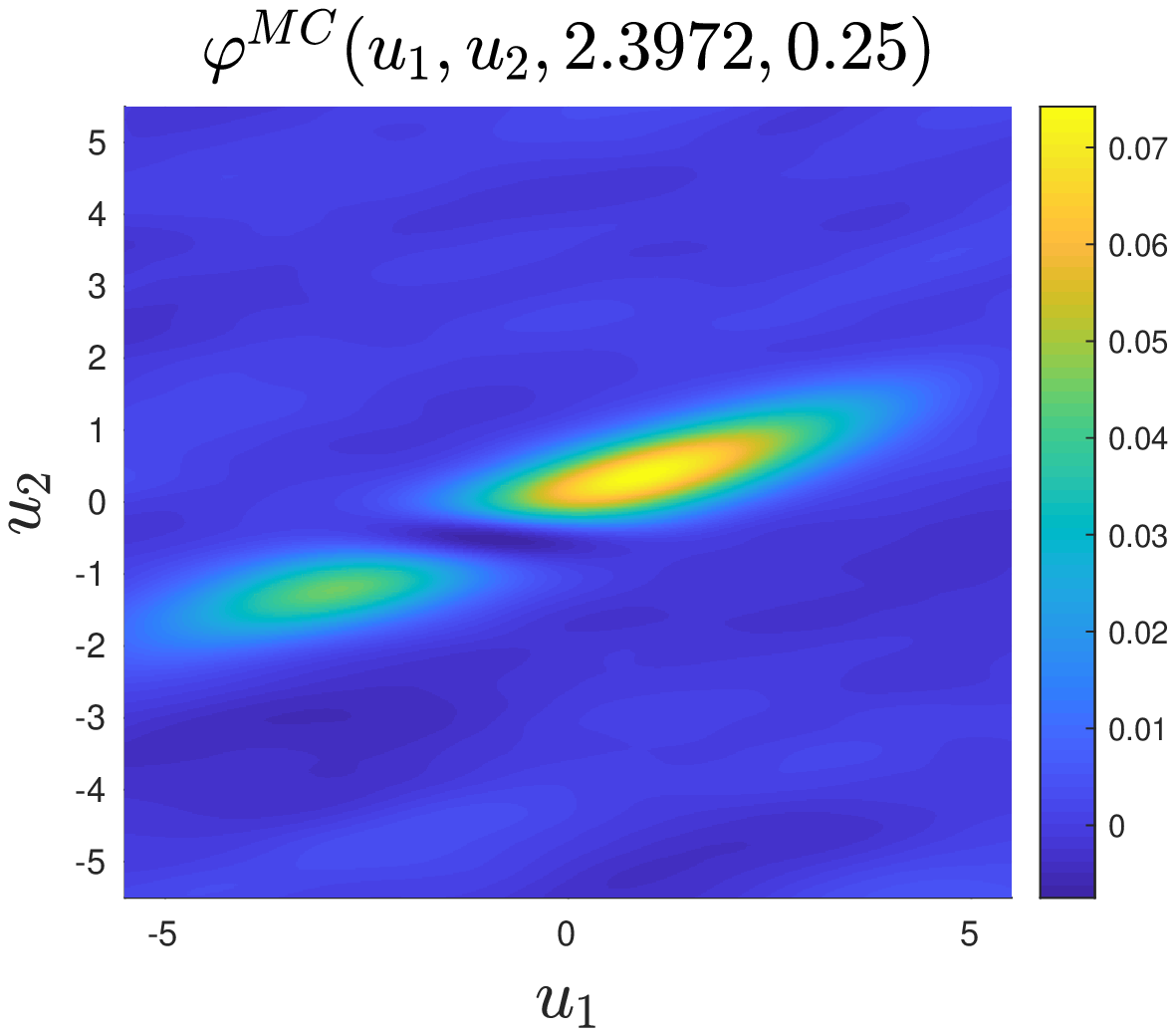} 
						\hspace{3em}
		\includegraphics[width = 0.435\textwidth]		
 						{./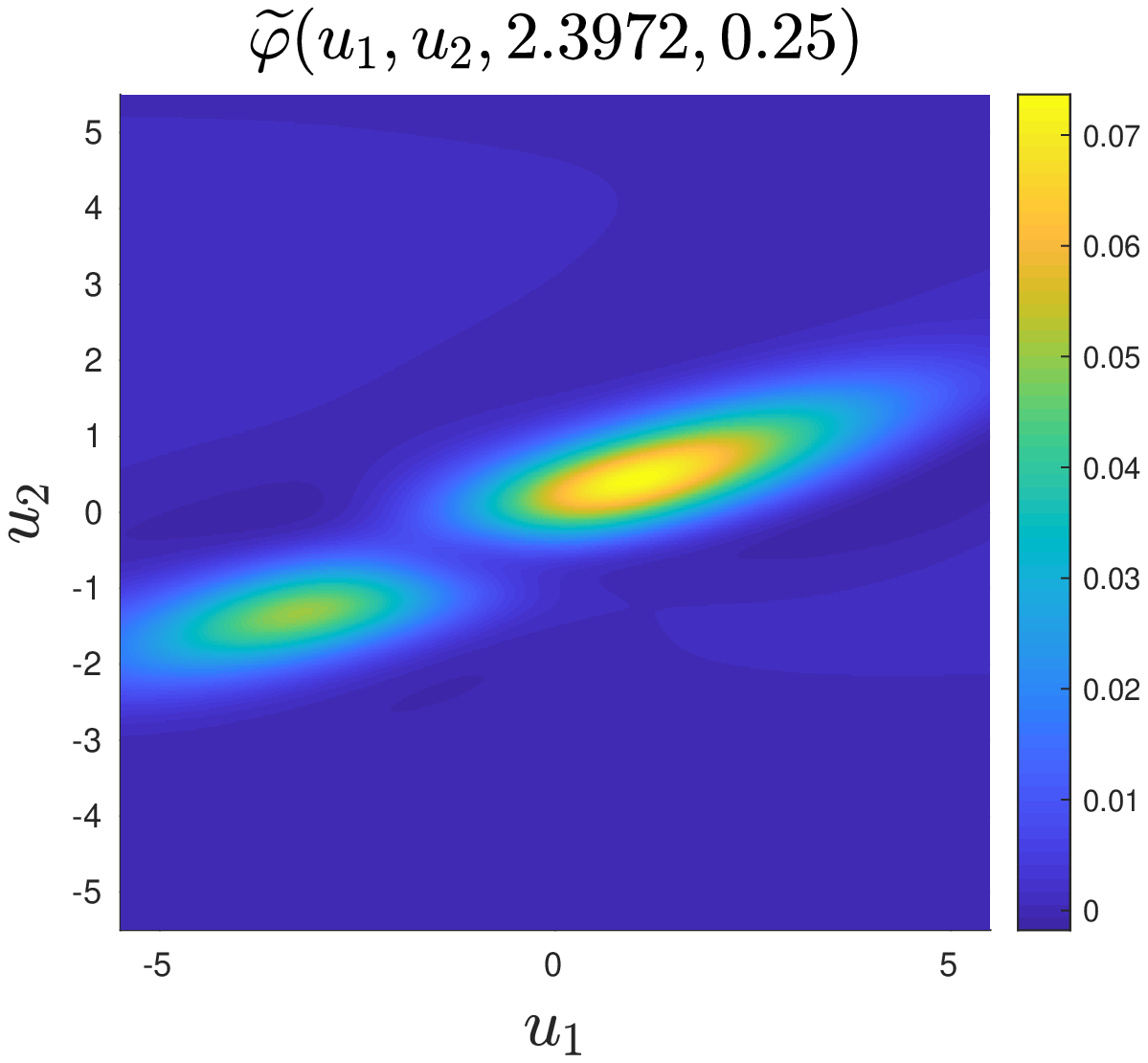}
		\caption{Comparison between the chf $\varphi^{MC}(\boldsymbol{u},t)$ obtained from Monte Carlo samples (left) and the neural network approximation $\widetilde{\varphi}(\boldsymbol{u},t)$ (right) for Section~\ref{subsec:3Dexample} at $u_3 = 2.3972, t=0.25$.}
		\label{fig:3Dtime1}
\end{figure}

\begin{figure}[h!]
		\centering
		\includegraphics[width = 0.47\textwidth]		
						{./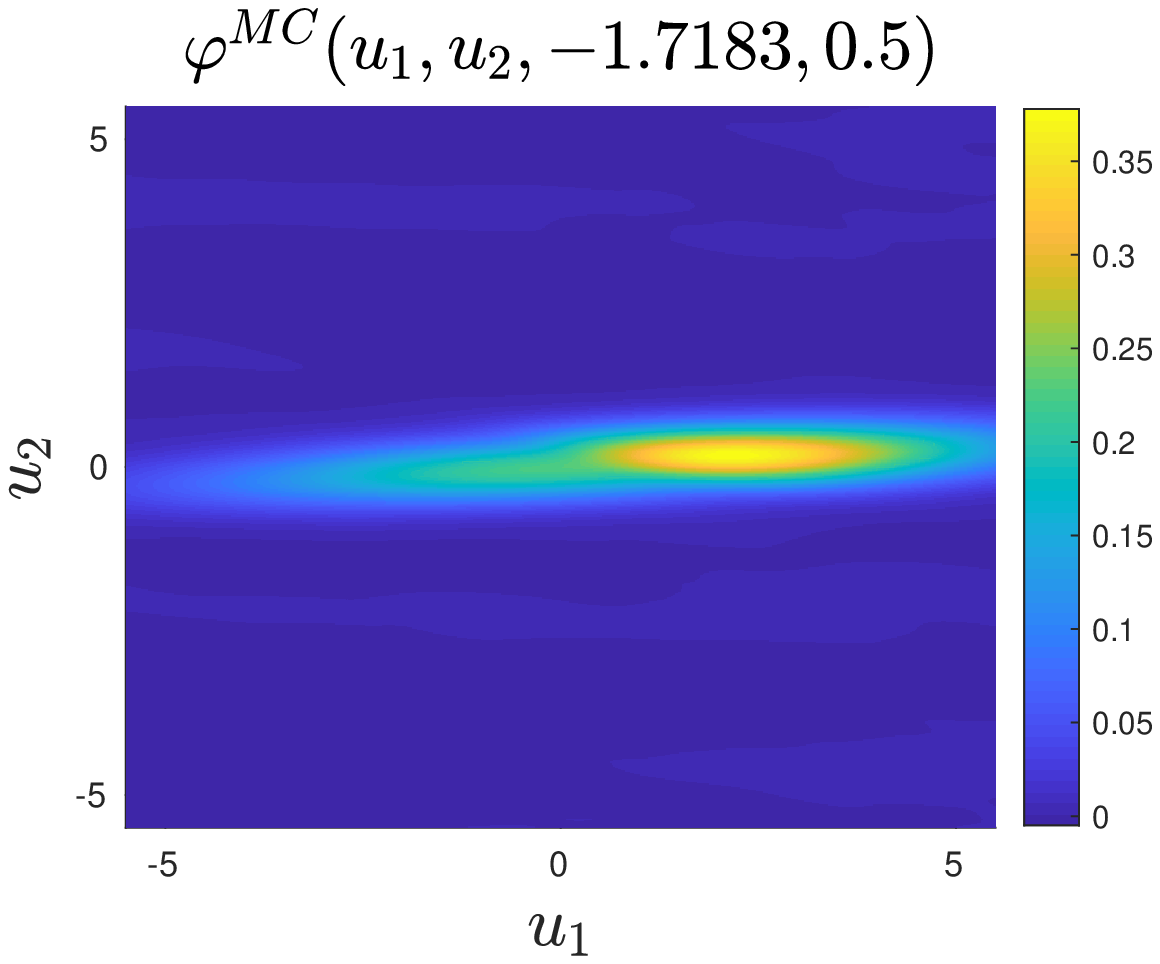} 
						\hspace{3em}
		\includegraphics[width = 0.43\textwidth]		
						{./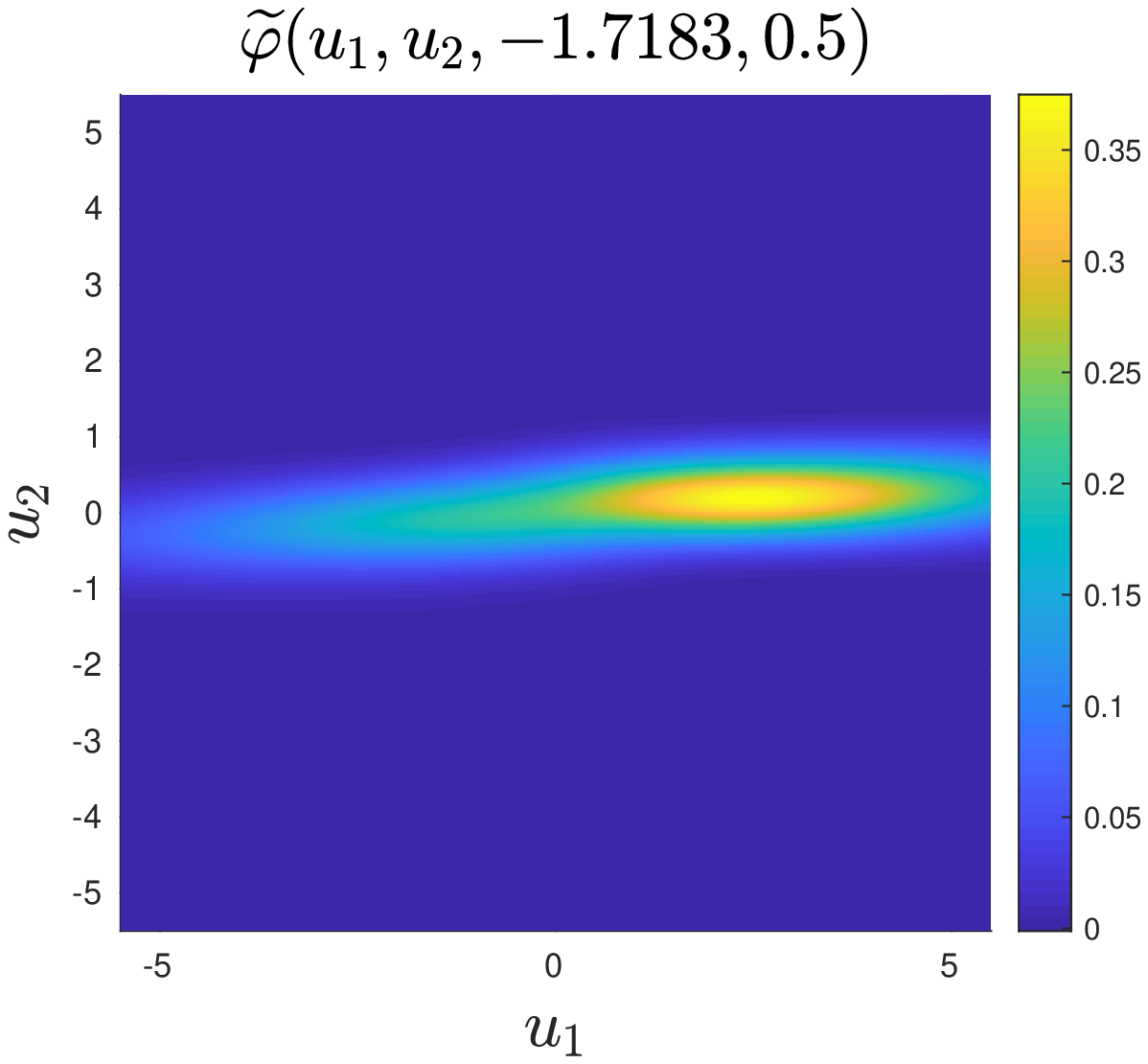}
		\caption{Comparison between the chf $\varphi^{MC}(\boldsymbol{u},t)$ obtained from Monte Carlo samples (left) and the neural network approximation $\widetilde{\varphi}(\boldsymbol{u},t)$ (right) for Section~\ref{subsec:3Dexample} at $u_3 = -1.7183, t=0.5$.}
		\label{fig:3Dtime2}
\end{figure}

\begin{figure}[h!]
		\centering
		\includegraphics[width = 0.45\textwidth]		
						{./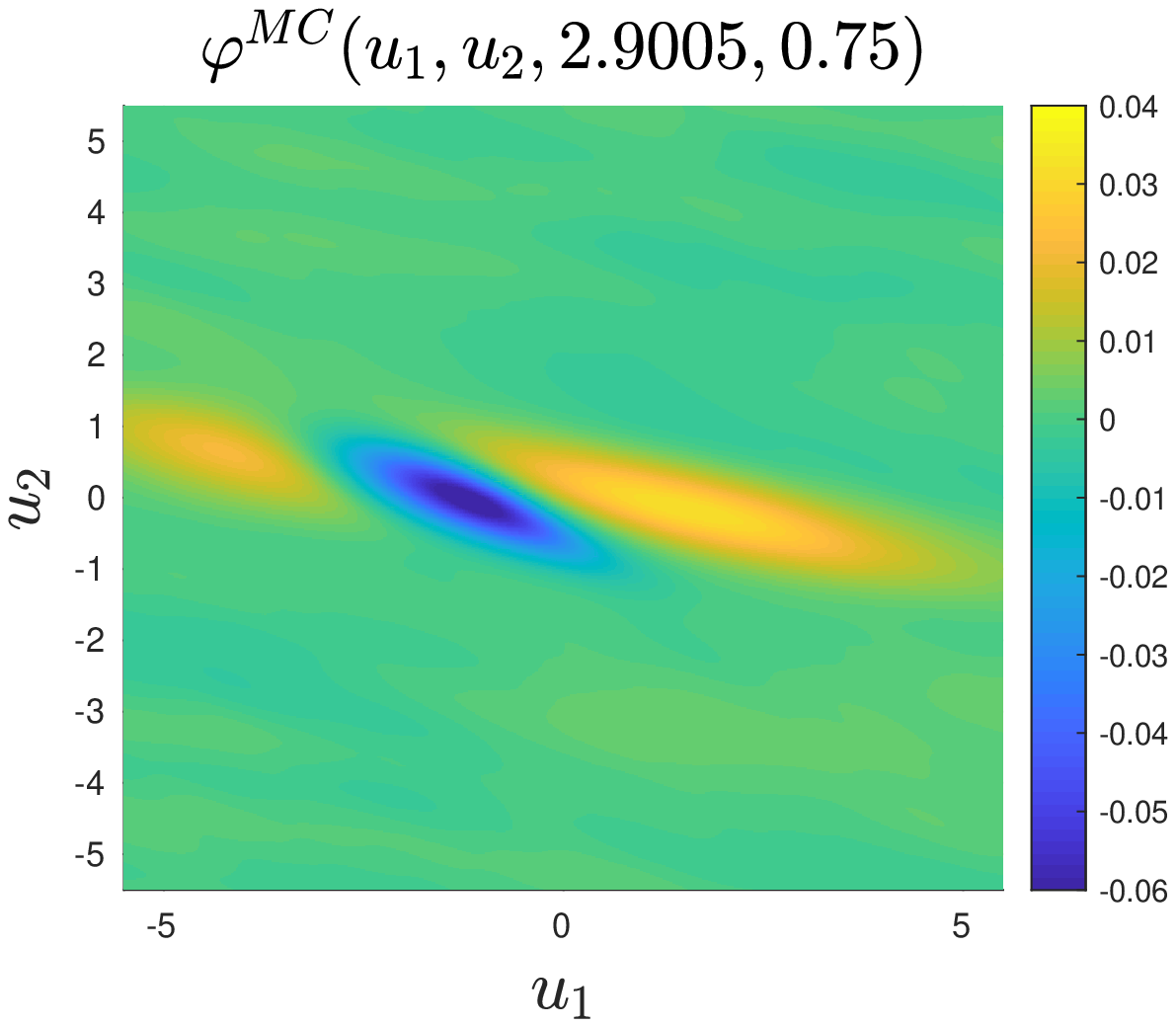} 
						\hspace{3em}
		\includegraphics[width = 0.45\textwidth]		
						{./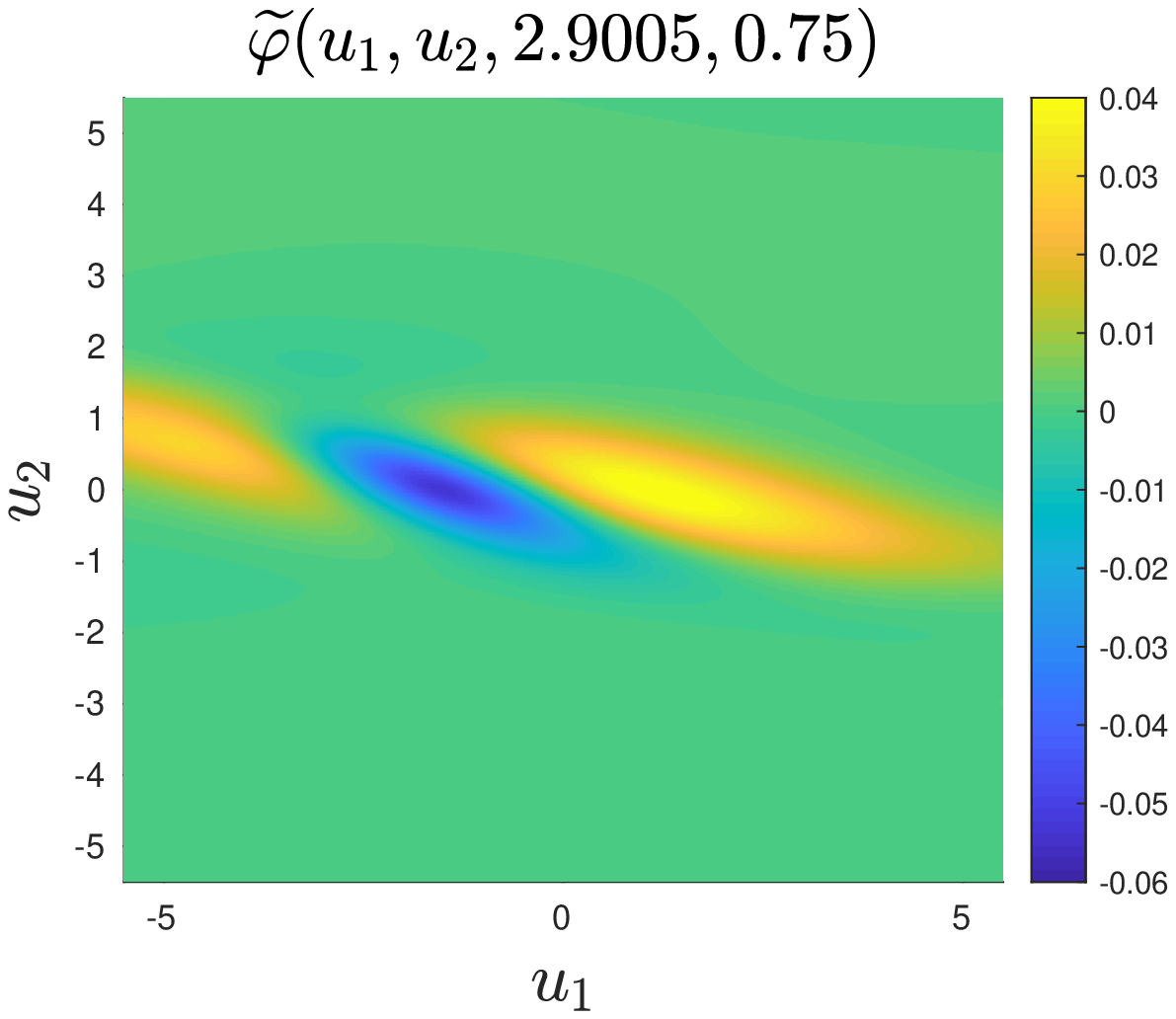}
		\caption{Comparison between the chf $\varphi^{MC}(\boldsymbol{u},t)$ obtained from Monte Carlo samples (left) and the neural network approximation $\widetilde{\varphi}(\boldsymbol{u},t)$ (right) for Section~\ref{subsec:3Dexample} at $u_3 = 2.9005, t=0.75$.}
		\label{fig:3Dtime3}
\end{figure}

\begin{figure}[h!]
		\centering
		\includegraphics[width = 0.45\textwidth]		
						{./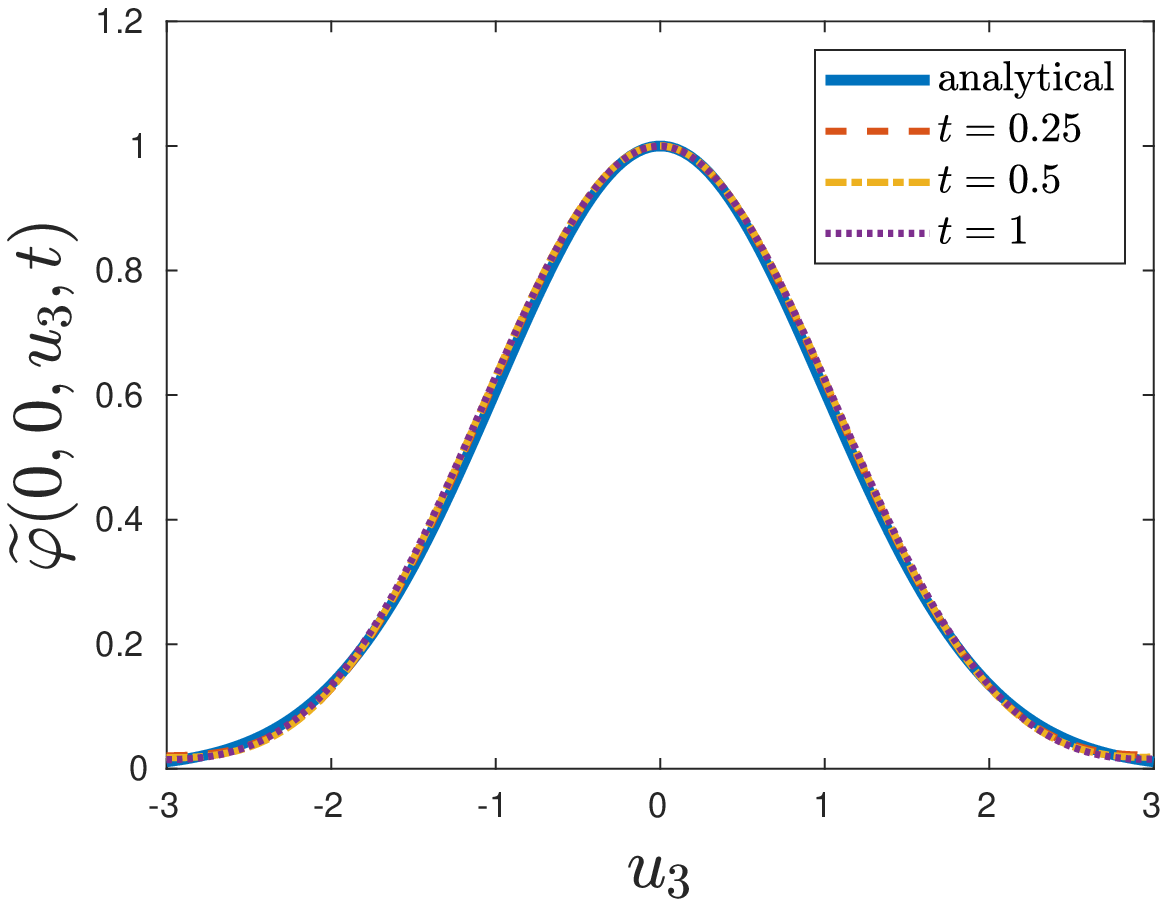} 
		\caption{Comparison of the analytical expression for $\varphi(0,0,u_3,t)$ (solid) and the neural network approximation $\widetilde{\varphi}(0,0,u_3,t)$ at various times (different styles of dashed lines) for Section~\ref{subsec:3Dexample}.}
		\label{fig:3DOU}
\end{figure}

\section{Conclusion}

This work is concerned with estimating the pdf of the state vector whose dynamics are described by a diffusion process. This is traditionally accomplished by solving the Fokker-Planck equation which describes the time evolution of the pdf.
Since solving this PDE through standard numerical methods may be infeasible for spatial dimensions larger than 3, the use of physics-informed neural networks to approximate the solution to this PDE was investigated. In addition, we sought a neural network solution to the differential equation for the characteristic of the state from which the pdf of the state can be deduced. By incorporating probabilistic constraints on the pdf and chf, we outlined strategies for designing the loss function to train a neural network representation for these quantities.

Through a wide variety of applications, it was demonstrated that the neural network approximation to the pdf or chf can match the analytical or Monte Carlo solution even for integro-differential equations and systems of PDEs. They also highlighted the advantages and disadvantages of solving one type of differential equation over another.  For example, while the differential equation for the chf is complex-valued with high-order derivatives, there are instances for which a PDE for the pdf is unavailable. Imposing the normalization constraint of the Fokker-Planck equation may also present numerical challenges in the neural network solution. Nevertheless, the applications underscore that solving either type of differential equation with neural networks offers consistent information on the pdf of the state and that the neural network representation of the pdf or the chf may be useful for a wide variety of applications. The ideas developed here can be readily extended to high-dimensional problems wherein training the neural network solution to the differential equation has to be performed in a gridless manner.

\section*{Acknowledgements}
We are grateful to Prof. Maziar Raissi and Ali Al-Aradi for sharing their code from which we based our python scripts for the neural network simulations. The work reported in this paper has been partially supported by the National Science Foundation under grant CMMI-1639669. This support is gratefully acknowledged.

\section*{Data availability statement}
The data that support the findings of this study are available from the corresponding author upon reasonable request.

% 3-D example with char fun (Duffing Oscillator with non-Gaussian forcing)
% (Real3SpatialD, book p. 464-466)

\bibliographystyle{abbrv}
\bibliography{references}

\begin{thebibliography}{10}

\bibitem{paper:TensorFlow}
M.~Abadi, A.~Agarwal, P.~Barham, E.~Brevdo, Z.~Chen, C.~Citro, G.~S. Corrado,
  A.~Davis, J.~Dean, M.~Devin, S.~Ghemawat, I.~Goodfellow, A.~Harp, G.~Irving,
  M.~Isard, Y.~Jia, R.~Jozefowicz, L.~Kaiser, M.~Kudlur, J.~Levenberg,
  D.~Man\'{e}, R.~Monga, S.~Moore, D.~Murray, C.~Olah, M.~Schuster, J.~Shlens,
  B.~Steiner, I.~Sutskever, K.~Talwar, P.~Tucker, V.~Vanhoucke, V.~Vasudevan,
  F.~Vi\'{e}gas, O.~Vinyals, P.~Warden, M.~Wattenberg, M.~Wicke, Y.~Yu, and
  X.~Zheng.
\newblock {TensorFlow}: Large-scale machine learning on heterogeneous systems,
  2015.
\newblock Software available from tensorflow.org.

\bibitem{paper:AlAradiCNJS2020}
A.~Al-Aradi, A.~Correia, D.~de~Frietas~Naiff, G.~Jardim, and Y.~Saporito.
\newblock Applications of the deep galerkin method to solving partial
  integro-differential and hamilton-jacobi-bellman equations, 2020.
\newblock arXiv:1912.01455v2.

\bibitem{paper:AlAradiCNJS2018}
A.~Al-Aradi, A.~Correia, D.~Naiff, G.~Jardim, and Y.~Saporito.
\newblock Solving nonlinear and high-dimensional partial differential equations
  via deep learning, 2018.
\newblock arXiv:1811.08782.

\bibitem{paper:ChenR2018}
J.~Chen and Z.~Rui.
\newblock Dimension-reduced {FPK} equation for additive white-noise excited
  nonlinear structures.
\newblock {\em Probabilistic Engineering Mechanics}, 53:1--13, June 2018.

\bibitem{paper:ChenM2018}
N.~Chen and A.~J. Majda.
\newblock Efficient statistically accurate algorithms for the
  fokker{\textendash}planck equation in large dimensions.
\newblock {\em Journal of Computational Physics}, 354:242--268, Feb. 2018.

\bibitem{paper:ChenMT2018}
N.~Chen, A.~J. Majda, and X.~T. Tong.
\newblock Rigorous analysis for efficient statistically accurate algorithms for
  solving fokker--planck equations in large dimensions.
\newblock {\em {SIAM}/{ASA} Journal on Uncertainty Quantification},
  6(3):1198--1223, Jan. 2018.

\bibitem{paper:ChoVK2016}
H.~Cho, D.~Venturi, and G.~Karniadakis.
\newblock Numerical methods for high-dimensional probability density function
  equations.
\newblock {\em Journal of Computational Physics}, 305:817--837, Jan. 2016.

\bibitem{paper:Cybenko1989}
G.~Cybenko.
\newblock Approximation by superpositions of a sigmoidal function.
\newblock {\em Mathematics of Control, Signals, and Systems}, 2(4):303--314,
  Dec. 1989.

\bibitem{paper:DektorV2019}
A.~Dektor and D.~Venturi.
\newblock Dynamically orthogonal tensor methods for high-dimensional nonlinear
  pdes, 2019.
\newblock arXiv:1907.05924.

\bibitem{book:GoodfellowBC2016}
I.~Goodfellow, Y.~Bengio, and A.~Courville.
\newblock {\em Deep Learning}.
\newblock MIT Press, 2016.
\newblock \url{http://www.deeplearningbook.org}.

\bibitem{book:Grigoriu2002}
M.~Grigoriu.
\newblock {\em Stochastic calculus. Applications in science and engineering.}
\newblock Birkh\"auser, Boston, 2002.

\bibitem{paper:Grigoriu2004}
M.~Grigoriu.
\newblock Characteristic function equations for the state of dynamic systems
  with gaussian, poisson, and l{\'{e}}vy white noise.
\newblock {\em Probabilistic Engineering Mechanics}, 19(4):449--461, 2004.

\bibitem{paper:Grigoriu2009}
M.~Grigoriu.
\newblock Reliability of linear systems under poisson white noise.
\newblock {\em Probabilistic Engineering Mechanics}, 24(3):397--406, July 2009.

\bibitem{paper:HornikSW1989}
K.~Hornik, M.~Stinchcombe, and H.~White.
\newblock Multilayer feedforward networks are universal approximators.
\newblock {\em Neural Networks}, 2(5):359--366, Jan. 1989.

\bibitem{book:KloedenP1992}
P.~E. Kloeden and E.~Platen.
\newblock {\em Numerical Solution of Stochastic Differential Equations}.
\newblock Springer Berlin Heidelberg, 1992.

\bibitem{paper:MasudB2005}
A.~Masud and L.~A. Bergman.
\newblock Solution of the four dimensional fokker--planck equation: still a
  challenge.
\newblock In G.~Augusti, G.~Schu\"{e}ller, and M.~Ciampoli, editors, {\em
  ICOSSAR}, Millpress, Rotterdam, 2005.

\bibitem{paper:PichlerMB2013}
L.~Pichler, A.~Masud, and L.~A. Bergman.
\newblock Numerical solution of the fokker{\textendash}planck equation by
  finite difference and finite element methods{\textemdash}a comparative study.
\newblock In {\em Computational Methods in Stochastic Dynamics}, pages 69--85.
  Springer Netherlands, 2013.

\bibitem{paper:RaissiPK2019}
M.~Raissi, P.~Perdikaris, and G.~Karniadakis.
\newblock Physics-informed neural networks: A deep learning framework for
  solving forward and inverse problems involving nonlinear partial differential
  equations.
\newblock {\em Journal of Computational Physics}, 378:686--707, Feb. 2019.

\bibitem{book:Risken1989}
H.~Risken.
\newblock {\em The Fokker-Planck Equation}.
\newblock Springer Berlin Heidelberg, 1989.

\bibitem{paper:SirignanoS2018}
J.~Sirignano and K.~Spiliopoulos.
\newblock {DGM}: A deep learning algorithm for solving partial differential
  equations.
\newblock {\em Journal of Computational Physics}, 375:1339--1364, Dec. 2018.

\bibitem{book:Skorohod1982}
A.~Skorohod.
\newblock {\em Studies in the theory of random processes}.
\newblock Dover Publications, Inc. New York, 1982.

\bibitem{paper:Stein1987}
M.~Stein.
\newblock Large sample properties of simulations using latin hypercube
  sampling.
\newblock {\em Technometrics}, 29(2):143--151, 1987.

\bibitem{paper:WojtkiewiczB2000}
S.~Wojtkiewicz and L.~Bergman.
\newblock Numerical solution of high dimensional fokker-planck equations.
\newblock In {\em 8th ASCE Specialty Conference on Probabilistic Mechanics and
  Structural Reliability}, 2000.

\bibitem{paper:XuZLZLK2020}
Y.~Xu, H.~Zhang, Y.~Li, K.~Zhou, Q.~Liu, and J.~Kurths.
\newblock Solving fokker-planck equation using deep learning.
\newblock {\em Chaos: An Interdisciplinary Journal of Nonlinear Science},
  30(1):013133, Jan. 2020.

\end{thebibliography}

\end{document}